\newcommand{\R}{\mathbb{R}}
\newcommand{\N}{\mathbb{N}}
\newcommand{\Z}{\mathbb{Z}}
\newcommand{\pp}{\mathbb{P}}
\newcommand{\kC}{\mathcal{C}}
\newcommand{\lin}{\left[\kern-0.15em\left[}
\newcommand{\rin} {\right]\kern-0.15em\right]}
\newcommand{\linf}{[\kern-0.15em [}
\newcommand{\rinf} {]\kern-0.15em ]}
\newcommand{\ilin}{\left]\kern-0.15em\left]}
\newcommand{\irin} {\right[\kern-0.15em\right[}
\newtheorem{lem}{Lemma}[section]
\newtheorem{prop}[lem]{Proposition}
\newtheorem{theo}[lem]{Theorem}
\newtheorem{cor}[lem]{Corollary}
\newtheorem {assum}[lem] {Assumption}
\newtheorem {defin}[lem] {Definition}
\newtheorem {rem}[lem] {Remark}
\begin{document}





\title{\textbf{Localization of a vertex reinforced random walk on $\Z$ with sub-linear weight}}

\author{
\normalsize{\textsc{Anne-Laure Basdevant}\footnote{Laboratoire
Modal'X, Université Paris Ouest, France. E-mail:
anne.laure.basdevant@normalesup.org},  \textsc{Bruno
Schapira}\footnote{Département de Mathématiques, Université Paris
XI, France. E-mail: bruno.schapira@math.u-psud.fr} \;and
\textsc{Arvind Singh}\footnote{Département de Mathématiques,
Université Paris XI, France. E-mail: arvind.singh@math.u-psud.fr}}}
\date{}
\maketitle

\vspace*{0.2cm}

\begin{abstract}
We consider a vertex reinforced random walk on the integer lattice
with sub-linear reinforcement. Under some assumptions on the regular
variation of the weight function, we characterize whether the walk
gets stuck on a finite interval. When this happens, we estimate the
size of the localization set. In particular, we show that, for any
odd number $N$ larger than or equal to $5$, there exists a vertex
reinforced random walk which localizes with positive probability on
exactly $N$ consecutive sites.
\end{abstract}

\bigskip
{\small{
 \noindent{\bf Keywords. } Self-interacting random walk; reinforcement; regular
 variation.

\bigskip
\noindent{\bf A.M.S. Classification. } 60K35; 60J17; 60J20

\section{Introduction}

The aim of this paper is to study a vertex reinforced random walk
(VRRW) on the integer lattice $\Z$ with weight sequence $(w(n),n\ge
0) \in (0,\infty)^\N$, that is, a stochastic process $X$ with
transition probabilities given by
\begin{eqnarray*}
\pp\{X_{n+1}=X_n-1 \mid X_0,X_1,\ldots,X_n\}&=&1-\pp\{X_{n+1}=X_n+1 \mid X_0,X_1,\ldots,X_n\}\\
&=&\frac{w(Z_n(X_n-1))}{w(Z_n(X_n-1))+w(Z_n(X_n+1))},
\end{eqnarray*}
where $Z_n(x)$ denotes the number of visits of $X$ to site $x$ up to
time $n$. Assuming that the sequence $w$ is non-decreasing, the walk
has a tendency to favour sites previously visited multiple times
before which justifies the denomination  "reinforced".

This process was introduced by Pemantle \cite{P} in $1992$ and
subsequently studied by several authors (see for instance
\cite{PV,Sch,T1,T2,V2}  as well as Pemantle's survey \cite{PS} and
the references therein). A particularly interesting feature of the
model is that the walk may get stuck on a finite set provided that
the weight sequence $w$ grows sufficiently fast. For instance, in
the linear case $w(n) =n+1$, it was proved in \cite{PV,T1} that the
walk ultimately localizes, almost surely, on five consecutive sites.
Furthermore, if the weight sequence is non-decreasing and grows even
faster (namely $\sum 1/w(n) < \infty$), then the walk localizes
almost surely on two sites \emph{c.f.} \cite{T2}. On the other hand,
if the weight sequence is  regularly varying at infinity with index
strictly smaller than $1$, Volkov \cite{V2} proved that the walk
cannot get stuck on any finite set (see also \cite{Sch} for refined results in this case).

These previous studies left open the critical case where the index
of regular variation of $w$ is equal to $1$ (except for linear
reinforcement). In a recent paper \cite{BSS}, the authors studied
the VRRW with super-linear weights and showed that the walk may
localize on $4$ or $5$ sites depending on a simple criterion on the
weight sequence. In this paper, we consider the remaining case where
the weight function grows sub-linearly. We are interested in finding
whether the walk localizes  and, if so, to estimate the size of the
localization set. More precisely, in the rest of the paper, we will
consider weight sequences which satisfy the following properties:
\begin{assum}\label{assumw}\
\begin{itemize}
\item[(i)] The  sequence $(w(n))_{n\ge 0}$ is positive, non-decreasing, sub-linear and regularly
varying with index $1$ at infinity. Therefore, it can be written in
the form:
$$
w(n) := \frac{n}{\ell(n)}\qquad\hbox{where the sequence $\ell(n)$
satisfies}\, \left\{
\begin{array}{l}
\lim_{n\to\infty}\ell(cn)/\ell(n) = 1 \hbox{ for all
$c>0$}\\
\lim_{n\to\infty}\ell(n) = \infty.
\end{array}
\right.
$$
\item[(ii)] The sequence $\ell(n)$ is eventually non-decreasing.
\end{itemize}
\end{assum}

\begin{rem}
Part (i) of the assumption is quite natural. It states that the
reinforcement is sub-linear yet close enough to linear so that it is
not covered by Volkov's paper \cite{V2}. It would certainly be nice
to relax the assumption of regular variation on $w$ but the
techniques used in this article crucially need it.
 On the contrary, (ii) is of a technical nature and is only required for proving the technical (yet essential) Lemma \ref{WL}. We
believe that it does not play any significant role and that the
results obtained in this paper should also hold without this
assumption.
\end{rem}

It is convenient to extend a weight sequence $w$ into a function so
that we may consider $w(n)$ for non-integer values of $n$. Thus, in
the following, we will call \textit{weight function} any continuous,
non-decreasing function $w:[0,\infty)\to (0,\infty)$. Given a weight
function, we associate the weight sequence obtained by taking its
restriction to the set $\N$ of integers. Conversely, to any weight
sequence $w$, we associate the weight function, still denoted $w$,
obtained by linear interpolation. It is straightforward to check that,
if a sequence $w$ fulfills Assumption \ref{assumw}, then its
associated weight function satisfies
\begin{itemize}
\item[(i)] $w:[0,\infty)\to(0,\infty)$ is a continuous, non-decreasing, sub-linear function which is regularly varying
with index $1$ at infinity. In particular, we can write $w$ in the
form:
$$
w(x) := \frac{x}{\ell(x)}\qquad\hbox{where}\quad \left\{
\begin{array}{l}
\lim_{x\to\infty}\ell(cx)/\ell(x) = 1 \hbox{ for all
$c>0$,}\\
\lim_{x\to\infty}\ell(x) = \infty.
\end{array}
\right.
$$
\item[(ii)] The function $\ell$ is eventually non-decreasing.
\end{itemize}
Therefore, in the rest of the paper, we will say that a weight
function satisfies Assumption \ref{assumw} whenever it fulfills (i)
and (ii) above. In order to state the main results of the paper, we
need to introduce some notation. To a weight function $w$, we
associate $W:[0,\infty)\to[0,\infty)$ defined by
\begin{equation}\label{defW}
W(x) := \int_0^x \frac 1{w(u)}\, du.
\end{equation}
Under Assumption \ref{assumw}, we have $\lim_\infty W = \infty$ so
that $W$ is an increasing homeomorphism on $[0,\infty)$ whose
inverse will be denoted by $W^{-1}$. Consider the operator $G$
which, to each measurable non-negative function $f$ on $\R_+$,
associates the function $G(f)$ defined by
\begin{equation}
\label{opG} G(f)(x):=\int_0^x\frac{w(W^{-1}(f(u))}{w(W^{-1}(u))}\,
du.
\end{equation}
We denote by $G^{(n)}$ the $n$-fold of $G$. For $\eta \in (0,1)$,
define the parameter:
\begin{equation}\label{defiw}
i_\eta(w):=\inf\left\{n\ge 2\; : \; G^{(n-1)}(\eta \hbox{Id}) \
\textrm{ is bounded} \right\},
\end{equation}
where $\hbox{Id}$ stands for the identity function  with the
convention $\inf\emptyset = +\infty$. Since $w$ is non-decreasing,
the map $\eta \mapsto i_\eta(w)$ is also non-decreasing. So we can
define $i_-(w)$ and $i_+(w)$ respectively as the left and right
limits at $1/2$:
\begin{equation}\label{defiplusmoins}
i_\pm(w):=\lim_{\eta\rightarrow {\frac{1}{2}}^\pm} i_\eta(w).
\end{equation}
 As we shall see later, the numbers $i_+(w)$ and $i_-(w)$ are either both
infinite or both finite and in the latter case, we have $i_+(w)-
i_-(w)\in \{0,1\}$. Let us also mention that, although there exist
weight functions for which $i_+(w)\neq i_-(w)$, those cases are
somewhat exceptional and correspond to critical cases for the
asymptotic behaviour of the VRRW (see Remark \ref{ipmdifferent}). We
say that a walk localizes if its range, \emph{i.e.} the set of sites
which are  visited, is finite. Our main theorem about
localization of a VRRW on $\Z$ is the following.

\begin{theo}\label{locps} Let $X$ be a  VRRW on $\Z$ with weight $w$ satisfying Assumption
\ref{assumw}. We have the equivalence
$$
i_{+}(w)<\infty \; \Longleftrightarrow \; i_{-}(w)<\infty \;
\Longleftrightarrow \;  X \mbox{  localizes with positive
probability } \; \Longleftrightarrow \;   X \mbox{  localizes a.s.}
$$
Let $R$ be the random set of sites visited infinitely often by the
walk and denote by $|R|$ its cardinality. When localization occurs
(\emph{i.e.} $i_\pm(w)<\infty$) we have
\begin{eqnarray*}
&(i)&\hbox{$|R| > i_-(w) + 1$ almost surely,}\\
&(ii)&\pp\big\{ 2i_-(w)+1 \le |R|\le 2i_+(w)+1 \big\}>0.
\end{eqnarray*}
\end{theo}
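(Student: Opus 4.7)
The plan is to split the theorem into three components: the chain of equivalences, the almost-sure lower bound $(i)$, and the positive-probability two-sided bound $(ii)$. For the zero-one law (localization with positive probability $\Rightarrow$ localization a.s.), I would use a recurrence dichotomy: on the complementary event that $X$ does not localize, every integer is visited infinitely often. This can be argued by showing that if some site $x$ were visited only finitely often, the Markov property, combined with the sub-linear and regularly varying growth of $w$, forces the walk to get trapped on a half-line and eventually on a finite subinterval, with Lemma~\ref{WL} controlling the cost of never crossing back through $x$. Combined with a standard coupling/restart argument exploiting the translation symmetry of $\Z$, this upgrades positive probability to probability one. The equivalence $i_-(w)<\infty \Leftrightarrow i_+(w)<\infty$ then relies on the authors' observation that $i_+(w)-i_-(w)\in\{0,1\}$, which I would verify via a direct monotonicity and regular-variation analysis of the operator $G$.

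The heart of the proof is the identification of localization with the operator-theoretic condition $i_\pm(w)<\infty$. Conditioning on localization on a specific interval $[a,b]$, one studies the limiting profile of local times $Z_\infty(x):=\lim_n Z_n(x)$. A martingale decomposition of the VRRW transitions shows that the quantities $W(Z_\infty(a+k))$ are linked across $k$ by the deterministic operator $G$, up to asymptotically vanishing corrections handled via Lemma~\ref{WL} and the regular variation of $w$. Propagating these relations outwards from the ``peak'' of the profile (where the ratio of consecutive local times is forced to be close to $1/2$) to the boundary sites $a-1$ and $b+1$, where $Z_\infty=0$, forces $G^{(m-1)}(\eta\,\text{Id})$ to be bounded for $\eta$ in a one-sided neighborhood of $1/2$, where $m$ is the distance from the peak to the boundary. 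This is precisely the condition defining $i_\pm(w)$, and it supplies both the implication \emph{$i_\pm(w)<\infty$ $\Rightarrow$ positive-probability localization} (by explicitly constructing a favorable event on which the walk is steered towards the fixed-point profile) and its converse, through the deterministic obstruction just described.

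The cardinality bounds then follow. For $(i)$, if the walk localized on an interval of size $|R|\le i_-(w)+1$, the number of $G$-iterations needed to reach the boundary would contradict the definition of $i_-(w)$ as the first level at which $G^{(\cdot)}(\eta\,\text{Id})$ becomes bounded for $\eta\nearrow 1/2$, giving the strict a.s.\ inequality $|R|>i_-(w)+1$. For $(ii)$, one realizes with positive probability the symmetric odd-sized profile on $2i_\pm(w)+1$ sites by forcing the walk's early excursions to match the fixed point of the $G$-recursion up to depth $i_\pm(w)$, which is achievable by prescribing a suitable initial local-time configuration on a finite window. The main obstacle I anticipate is the second stage: rigorously extracting the deterministic $G$-recursion from the stochastic fluctuations of $W(Z_n(\cdot))$. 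Uniform martingale control across the localization interval, particularly near the peak where neighboring local times are in delicate balance, is where Lemma~\ref{WL} together with the self-similar scaling afforded by regular variation must do the essential technical work.
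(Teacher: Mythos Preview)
Your high-level picture is right---the local time profile is governed by iterates of $G$ with the center ratio forced to $1/2$---but the proposal is missing the paper's central technical device and would stall at exactly the point you flag as ``the main obstacle.'' The paper does \emph{not} extract the $G$-recursion directly from the VRRW. As the introduction notes, the site martingales $M_n(x)$ of the VRRW are strongly correlated across $x$, and Lemma~\ref{WL} alone does not decouple them. Instead the paper introduces two auxiliary walks $\widetilde{X}$ and $\widehat{X}$ on the half-line with a \emph{mixed} reinforcement: left jumps use the left \emph{site} local time, right jumps use the right \emph{edge} local time (inflated slightly for $\widehat{X}$). This asymmetry creates one-directional structural independence---the martingale at $x$ now depends only on quantities to the left---so the bounds $\Phi_{1/2-\alpha,j}\lesssim N(j-1,j)\lesssim\Phi_{1/2+\alpha,j}$ can be proved by a clean left-to-right induction (Proposition~\ref{ERRW}, Lemma~\ref{lemhatXK}). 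A pathwise coupling (Lemma~\ref{propEERW}) then sandwiches the reflected VRRW $\bar{X}$ between $\widetilde{X}$ and $\widehat{X}$, yielding Proposition~\ref{locabarX} for $\bar{X}$, and a final coupling $X\prec\bar{X}$ plus a symmetrization (Lemma~\ref{barXtech}) transfers everything to $X$.

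Two further points where your sketch diverges from what actually works. First, your zero-one law via ``non-localization $\Rightarrow$ recurrence on all of $\Z$'' is not established anywhere and is nontrivial for sub-linear VRRW; the paper instead proves a.s.\ localization of $\bar{X}$ directly by a restart argument along far-away sites (end of the proof of Proposition~\ref{locabarX}) and deduces $\sup_n X_n<\infty$ from $X\prec\bar{X}$. Second, for $(ii)$ your plan to ``force early excursions to match the fixed point'' is the right intuition but is not implemented on $X$: the favorable event is built for $\widehat{X}$ (the set $\widehat{\mathcal{E}}(L,M)$ in \eqref{defEKN}), pushed to $\bar{X}$ via Lemma~\ref{probapositive}, and the center balance $Z_n(1)\sim Z_n(-1)$ needed for the symmetric $(2i_\pm+1)$-site picture comes from the analysis of $\widehat{X}$ in \eqref{eqlim-11}, not from the VRRW martingales.
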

The lower bound on $|R|$  given in ($i$) can be slightly improved
for small values of $i_-(w)$ using a different approach which relies on arguments similar to those introduced by Tarrès in \cite{T1,T2}.
\begin{prop}\label{theoic3} Assume that $w$ satisfies Assumption
\ref{assumw}.
\begin{itemize}
\item[(i)] If $i_-(w) = 2$ then $|R| > 4$ almost surely.
\item[(ii)] If $i_-(w) = 3$ then $|R| > 5$ almost surely.
\end{itemize}
\end{prop}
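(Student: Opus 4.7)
The plan is to argue by contradiction, ruling out $|R|=i_-(w)+2$, that is $|R|=4$ in case (i) and $|R|=5$ in case (ii), by adapting the arguments introduced by Tarr\`es \cite{T1,T2} for the linear VRRW (where this very step is what raises the stable localization size from $4$ up to $5$).

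Assume $X$ localizes on an interval $R=\{a,a+1,\ldots,a+N-1\}$ with $N=i_-(w)+2\in\{4,5\}$, so every site of $R$ is visited infinitely often. For each interior site $x\in R$, the successive jumps out of $x$ form a generalized two-color P\'olya urn with compositions $(w(Z_n(x-1)),w(Z_n(x+1)))$. Under Assumption \ref{assumw}, a stochastic approximation argument in the spirit of \cite{T1,V2} should show that, on the localization event, the ratio $Z_n(x-1)/Z_n(x+1)$ has an almost sure positive finite limit, so that the normalized local times admit a deterministic asymptotic profile across the interval.

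Combining these urn limits with the edge-crossing balance (the net number of transitions across any edge is bounded) yields a functional relation between the normalized local times at neighbouring sites, expressed through the operator $G$ of \eqref{opG}. Iterating this relation from the left endpoint of $R$, where the asymptotic rightward jump proportion from $a+1$ is some $\eta<1/2$, all the way to the right endpoint $a+N-1$, produces a chain of identities involving $G^{(N-2)}(\eta\,\hbox{Id})$. A careful analysis of the extremal behaviour as $\eta\to (1/2)^-$ should then show that compatibility of this chain with localization on exactly $N$ sites is incompatible with $i_-(w)=N-2$, yielding the desired contradiction in both cases by the definitions \eqref{defiw}--\eqref{defiplusmoins}.

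The main obstacle is the simultaneous almost sure control of the urn ratios at the interior sites of $R$: since the urns share local times at common vertices, the limits cannot be treated independently, and a careful induction starting from the boundary of $R$ inward is needed to identify the limiting profile on a set of full probability on the localization event. Part (ii) of Assumption \ref{assumw} on $\ell$ enters precisely here, providing the uniform regularity estimates needed to match the discrete urn dynamics with the continuous operator $G$. Case (ii) of the proposition is one iteration deeper than case (i) and represents the technical ceiling of this direct approach; extending it to values of $i_-(w)$ larger than $3$ would require tracking the coupling of an increasing number of interior urns, which seems out of reach by this method.
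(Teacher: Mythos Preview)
Your proposal is a plan rather than a proof, and its central step is where the real difficulty lies --- and unfortunately where the approach breaks. You assert that on the localization event each interior ratio $Z_n(x-1)/Z_n(x+1)$ converges almost surely to a positive finite limit, yielding a ``deterministic asymptotic profile''. This is not what happens: the endpoints of $R$ carry an asymptotically negligible fraction of the local time, so for $x$ adjacent to the boundary the ratio $Z_n(x-1)/Z_n(x+1)$ (or its reciprocal) tends to $0$, not to a positive constant. There is therefore no fixed $\eta\in(0,1/2)$ describing the rightward proportion from $a+1$, and the chain ``$G^{(N-2)}(\eta\,\mathrm{Id})$'' you want to iterate never gets started. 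The stochastic-approximation heuristic you invoke is tailored to situations with non-degenerate limiting proportions and does not apply at the edges of the trapping interval.

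The paper's argument avoids ratios entirely and works in the $W$-scale. Part (i) is quoted directly from \cite{BSS}. For (ii), one passes to the reflected walk on $\lin 0,4\rin$ and, via Lemma~3.7 of \cite{BSS}, reduces to the event $\mathcal H=\{\bar Y_\infty^+(0)<\infty\}\cap\{\bar Y_\infty^-(4)<\infty\}$ having positive probability. The bounded $L^2$ martingales $\bar M_n(x)=\bar Y_n^+(x)-\bar Y_n^-(x)$ give additive relations among the $W(\bar Z_n(\cdot))$: one first gets $W(\bar Z_n(1))\equiv W(\bar Z_n(3))$, hence $\bar Z_n(1)\sim\bar Z_n(3)\sim n/4$, and then $W(\bar Z_n(2))\equiv W(\bar Z_n(0))+W(\bar Z_n(4))$. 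From this one extracts $\bar Z_n(2)\gtrsim n/4$ and $\min(\bar Z_n(0),\bar Z_n(4))=o(\bar Z_n(2))$, which already shows the boundary ratios degenerate. The contradiction is then obtained not by an incompatibility of profiles but by a direct summability estimate: bounding $\bar Y_\infty^+(0)+\bar Y_\infty^-(4)$ from below by a series comparable to $\sum_k 1/w(W^{-1}(2W(k)+\gamma))$, whose finiteness forces $\Phi_{\eta,3}$ to be bounded for every $\eta<1/2$, i.e.\ $i_-(w)=2$. None of these ingredients --- the event $\mathcal H$, the additive $W$-identities, or the summability criterion linking $\bar Y_\infty^\pm$ to $\Phi_{\eta,3}$ --- appear in your outline, and they are precisely what replaces the (unavailable) ratio limits you rely on.
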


Let us make some comments. The first part of the theorem identifies
weight functions for which the walk localizes. However, although we
can compute $i_\pm(w)$ for several examples, deciding the finiteness of these
indexes is usually rather challenging. Therefore, it would
be interesting to find a simpler test concerning the operator $G$ to
check whether its iterates $G^{(n)}(\eta \hbox{Id})$ are ultimately
bounded. For instance, does there exist a simple integral test on
$w$ characterizing the behaviour of $G$ ?

The second part of the theorem estimates the size of the
localization interval. According to Proposition \ref{calculi} stated
below, ($i$) shows that there exist walks which localize only on
arbitrarily large subsets but  this lower bound is not sharp as
Proposition \ref{theoic3} shows. In fact, we expect the correct
lower bound to be the one given in ($ii$). More precisely we
conjecture that, when localization occurs,
\begin{equation*}
2i_-(w)+1\;  \le \; |R|\; \le \; 2i_+(w)+1 \qquad \hbox{almost
surely.}
\end{equation*}
In particular, when $i_+(w) = i_-(w)$, the walk should localize a.s.
on exactly $2i_\pm(w) +1$ sites. However, we have no guess as to
whether the cardinality of $R$ may be random when the indexes
$i_\pm(w)$ differ. Let us simply recall that, for super-linear
reinforcement of the form $w(n) \sim n\log\log n$, the walk
localizes on $4$ or $5$ sites so that $|R|$ is indeed random in that
case, \emph{c.f.} \cite{BSS}. Yet, the localization pattern for
super-linear weights is quite specific and may not apply in the
sub-linear case considered here.

 Let us also remark that the trapping of a self-interacting random walk on an arbitrary large
subset of $\Z$ was previously observed by Erschler, T\'oth and
Werner \cite{ETW1,ETW2} who considered a model called \emph{stuck
walks} which mixes both repulsion and attraction mechanisms.
Although stuck walks and VRRWs both localize on large sets, the
asymptotic behaviours of these processes are very different. For
instance, the local time profile of a stuck walk is such that it
spends a positive fraction of time on every site visited infinitely
often. On the contrary, the VRRW exhibits localization patterns
where the walk spends most of its time on three consecutive sites
and only a negligible fraction of time on the other sites of $R$
(\emph{c.f.} Section \ref{sectionlocaltime} for a more detailed
discussion on this subject).

\medskip

As we already mentioned, we can compute $i_\pm(w)$ for particular
classes of weight functions. The case where the slowly varying
function $\ell(x)$ is of order $\exp(\log^\alpha(x))$ turns out to
be particularly interesting.

\begin{prop}\label{calculi} Let $w$ be a non-decreasing weight sequence such that
$$
w(k)\; \underset{k\to\infty}{\sim} \; \frac{k}{\exp(\log^\alpha
k)}\qquad\hbox{for some $\alpha\in(0,1)$.}
$$
Then $i_-(w)$ and $i_+(w)$ are both finite.
 Moreover, for
$n\in\N^*$, we have
$$ \alpha \in \left(\frac{n-1}{n},\frac{n}{n+1}\right) \quad \Longrightarrow \quad i_-(w)=i_+(w)=n+1.$$
\end{prop}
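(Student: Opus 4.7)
The plan is to compute precise asymptotics of $W$ and $W^{-1}$ under the assumption $\ell(x)\sim\exp((\log x)^{\alpha})$, then describe how $G$ acts on a one-parameter family of trial functions $f(u)=u\exp(-c(\log u)^{\beta})$, and iterate until a bounded function is produced. The substitution $v=\log u$ in $W(x)=\int_0^x\ell(u)/u\,du$ together with a Laplace-type integration by parts gives
\[
W(x)\sim\frac{(\log x)^{1-\alpha}}{\alpha}\exp\bigl((\log x)^{\alpha}\bigr),\qquad \log W^{-1}(y)=(\log y)^{1/\alpha}+\text{lower order},
\]
and since $\log w(z)=\log z-(\log z)^{\alpha}$, one then obtains
\[
\log w(W^{-1}(y))=(\log y)^{1/\alpha}-\log y+R(y),\qquad R(y)=O\bigl((\log y)^{1/\alpha-1}\log\log y\bigr).
\]

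The core step is the following iteration lemma. If $f(u)=u\exp(-c(\log u)^{\beta})(1+o(1))$ with $c>0$ and $\beta\in[0,1)$, write $\delta(u):=\log f(u)-\log u\sim -c(\log u)^{\beta}$. A first-order Taylor expansion $(L+\delta)^{1/\alpha}-L^{1/\alpha}\sim(1/\alpha)L^{1/\alpha-1}\delta$ gives
\[
\log\frac{w(W^{-1}(f(u)))}{w(W^{-1}(u))}\sim -\frac{c}{\alpha}(\log u)^{\beta+1/\alpha-1},
\]
since the residual $-\delta(u)\sim c(\log u)^{\beta}$ is of smaller order (because $1/\alpha>1$), and the $R$-contribution $R(f(u))-R(u)$ is smaller still (the correction telescopes: $R$ is slowly varying in $\log y$, so its difference loses a factor of $\log u$). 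With $\beta'=\beta+(1-\alpha)/\alpha$ and $c'=c/\alpha$, the integrand defining $G(f)$ behaves like $\exp(-c'(\log u)^{\beta'})$. The dichotomy is then: if $\beta'>1$, the integral $\int_0^\infty\exp(-c'(\log u)^{\beta'})\,du$ is finite and $G(f)$ is bounded; if $\beta'<1$, the substitution $v=\log u$ followed by integration by parts yields $G(f)(x)\sim x\exp(-c'(\log x)^{\beta'})$, which is again of the same type.

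Starting from $f_0=\eta\,\mathrm{Id}$ (so $\beta_0=0$, $c_0=-\log\eta>0$), the iteration yields $\beta_k=k(1-\alpha)/\alpha$ and $c_k=c_0/\alpha^k>0$. For $\alpha\in\bigl(\tfrac{n-1}{n},\tfrac{n}{n+1}\bigr)$ the value $\alpha/(1-\alpha)$ lies in $(n-1,n)$, so $\beta_{n-1}<1<\beta_n$; by the dichotomy, $G^{(n-1)}(\eta\,\mathrm{Id})$ is unbounded while $G^{(n)}(\eta\,\mathrm{Id})$ is bounded, hence $i_\eta(w)=n+1$ for every $\eta\in(0,1)$, and therefore $i_-(w)=i_+(w)=n+1$. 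The main technical obstacle I expect is the uniform propagation of the asymptotic expansions through the iteration: one must keep sufficient precision in the expansions of $W^{-1}$ and of $\log w(W^{-1}(y))$ so that the $o(1)$ error in the representation of $f_k$ is transported by $G$ and remains negligible compared to the leading term governing $f_{k+1}$. The critical boundary $\alpha=n/(n+1)$, where $\beta_n=1$ and the integral test becomes the delicate $\int u^{-c_n/\alpha}du$, is comfortably excluded by the open intervals in the statement.
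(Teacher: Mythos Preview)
Your approach is essentially the same as the paper's: both reduce (via Proposition~\ref{wequiv}(i)) to the exact weight $w(x)=x\exp(-(\log x)^{\alpha})$, compute the asymptotics $W(x)\sim\alpha^{-1}(\log x)^{1-\alpha}\exp((\log x)^{\alpha})$ and its inverse, and show that $G$ maps the family $u\exp(-c(\log u)^{\beta})$ into itself with the exponent shift $\beta\mapsto\beta+(1-\alpha)/\alpha$, yielding $i_\eta(w)=\inf\{k\ge2:(k-1)(1-\alpha)/\alpha>1\}$ independently of~$\eta$. The only stylistic difference is that the paper propagates two-sided inequalities with distinct constants $c_1,c_2$ (using explicit envelopes $U_1\le W^{-1}\le U_2$) rather than asymptotic equivalences, which is precisely the robust way to handle the error-propagation issue you flag at the end.
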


The proposition implies that, for any odd number $N$ larger than or equal
to $5$, there exists a VRRW which localizes on exactly $N$ sites
with positive probability. It is also known from previous results
\cite{BSS,V2} that a VRRW may localize on $2$ or $4$ sites (but it
cannot localize on $3$ sites). We wonder whether there exist any
other admissible values for $|R|$ apart from $2,4,5,7,9,\ldots$ Let
us also mention that, using monotonicity properties of $i_\pm$, it
is possible to construct a weight function $w$, regularly varying
with index $1$, which is growing slower than
$x/\exp(\log^\alpha(x))$ for any $\alpha <1$ such that $i_\pm(w)
=\infty$. For example, this is the case if
$$w(x) \sim \frac{x}{\exp\left(\frac{\log x}{\log\log x}\right)}$$
\emph{c.f.} Corollary \ref{ipminfini}. Hence, a walk with such
reinforcement does not localize. However, we expect it to have a
very unusual behaviour: we conjecture it is recurrent on $\Z$ but
spends asymptotically all of its time on only three sites.

\medskip

Let us give a quick overview of the strategy for the proof of
Theorem \ref{locps}. The main part consists in establishing a similar
result for a reflected VRRW $\bar{X}$ on the half-line $\lin
-1,+\infty\irin$. In order to do so, we introduce two alternative
self-interacting random walks $\widetilde{X}$ and $\widehat{X}$
which, in a way, surround the reflected walk $\bar{X}$. The
transition mechanisms of these two walks are such that, at each time
step, they jump to their left neighbour with a probability
proportional to a function of the site local time on their left,
whereas they jump to the right with a probability proportional to a
function of the edge local time on their right. It is well known
that an \emph{edge} reinforced random walk on $\Z$ (more generally, on any acyclic graph) may be
constructed from a sequence of i.i.d. urn processes, see for instance Pemantle \cite{PE}. Subsequently, in the case of \emph{vertex} reinforced random walks,
Tarr\`es \cite{T1} introduced martingales attached to each site, which play a similar role as urns, but
a major difficulty is that they are, in that case, strongly correlated.
Considering walks
$\widetilde{X},\widehat{X}$ with a mixed \emph{site}/\emph{edge}
reinforcement somehow gives the best of both worlds: it enables to
simplify the study of these walks by creating additional structural
independence (in one direction) while still preserving the flavor
and complexity of the site reinforcement scheme. In particular,
$\widetilde{X}$,$\widehat{X}$ have the nice restriction property
that their laws on a finite set do not depend upon the path taken by
the walks on the right of this set. Considering reflected walks, we
can then work by induction and prove that when the critical indexes
$i_\pm$ are finite, $\widetilde{X}$,$\widehat{X}$ localize on
roughly $i_{\pm} +1$ sites. Then, in turn, using a coupling argument
we deduce a similar criterion for the reflected VRRW $\bar{X}$. The
last step consists in transferring these results to the
non-reflected VRRW on $\Z$. The key point here being that the
localization pattern for $\widetilde{X}$,$\widehat{X}$ has a
particular shape where the urn located at the origin is balanced,
\emph{i.e.} sites $1$ and $-1$ are visited about half as many times
as the origin. This fact permits to use symmetry arguments to
construct a localization pattern for the non reflected walk of size
of order $2 i_{\pm} +1$.

\medskip

The rest of the paper is organized as follows. In Section
\ref{sectiontwo}, we prove Proposition \ref{calculi} and collect
several results concerning the critical indexes which we will need
later on during the proof of the theorem. In Section \ref{seccouplage},
we introduce the three walks $\widetilde{X}$,$\widehat{X}$ and
$\bar{X}$ mentioned above and we prove coupling properties between
these processes. Sections \ref{sectiontilde} and \ref{sectionhat}
are respectively devoted to studying the walks $\widetilde{X}$ and
$\widehat{X}$. In Section \ref{sectionbar}, we rely on the results
obtained in the previous sections to describe the asymptotic
behaviour of $\bar{X}$. The proof of Theorem \ref{locps} is then
carried out in Section \ref{sectionmaintheo} and followed in Section
\ref{sectionlocaltime} by a discussion concerning the shape of the
asymptotic local time profile. Finally we provide in the appendix a
proof of Proposition \ref{theoic3} which, as we already mentioned,
uses fairly different technics but is still included here for the
sake of completeness.

\section{Preliminaries: properties of $W$ and $i_\pm(w)$}\label{sectiontwo}

The purpose of this section is to study the operator $G$ and collect
 technical results from real analysis concerning regularly varying
functions. As such, this section is not directly related with VRRW
and does not involve probability theory. The reader interested in
the main arguments used for proving Theorem \ref{locps} may wish to
continue directly to Section $3$ after simply reading the statement
of the results of this section.

\subsection{Some properties of the slowly varying function $W$}

From now on, we assume that all the weight functions considered
satisfy Assumption \ref{assumw} (i).

\begin{lem}\label{limborne}
The function $W$ defined by \eqref{defW} is slowly varying
\emph{i.e.}
$$
W(cx)\underset{x\to\infty}{\sim} W(x) \qquad\hbox{ for any $c>0$}.
$$
Moreover, given two positive functions $f$ and $g$ with $\lim_{\infty} f=\lim_{\infty} g=+\infty$, we
have
\begin{eqnarray}
\label{limborne_eq1}\limsup_{x\to\infty}\frac{W(f(x))}{W(g(x))} <1 & \Longrightarrow &  \lim_{x\to\infty} \frac{f(x)}{g(x)}= 0,\\
\label{limborne_eq2} \sup_{x\geq 0} \big( W(f(x))-W(g(x)) \big)
<\infty & \Longrightarrow & \limsup_{x\to\infty} \frac{f(x)}{g(x)}
\leq 1,\\
\label{limborne_eq3} \sup_{x\geq 0} \big| W(f(x))-W(g(x)) \big|
<\infty & \Longrightarrow & \lim_{x\to\infty} \frac{f(x)}{g(x)} = 1.
\end{eqnarray}
\end{lem}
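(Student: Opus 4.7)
The plan is to reduce every claim to two standard facts about slowly varying functions: the uniform convergence theorem for $\ell$ (which gives $\ell(u)/\ell(x)\to 1$ uniformly as $x\to\infty$ for $u$ in any compact multiplicative neighborhood of $x$), and Karamata's theorem at the critical exponent $-1$ (which asserts that $\int_1^x \ell(u)/u\,du$ is itself slowly varying and satisfies $\int_1^x\ell(u)/u\,du\big/\ell(x)\to\infty$).

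First I would establish slow variation of $W$ quantitatively. Using $w(u)=u/\ell(u)$ for $u$ large and splitting off a bounded contribution near $0$, one has
$$W(cx)-W(x)=\int_x^{cx}\frac{du}{w(u)}=\int_x^{cx}\frac{\ell(u)}{u}\,du\qquad(x\text{ large}),$$
and the uniform convergence theorem gives $\ell(u)\sim\ell(x)$ uniformly on $[x,cx]$ (or $[cx,x]$ when $c<1$), so that $W(cx)-W(x)\sim\ell(x)\log c$ as $x\to\infty$. Since Karamata yields $W(x)/\ell(x)\to\infty$, dividing by $W(x)$ gives the first claim $W(cx)/W(x)\to 1$. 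I would record the byproduct that, for any fixed $c>1$, $W(cy)-W(y)\sim\ell(y)\log c\to\infty$ as $y\to\infty$, because $\ell\to\infty$; this is the quantitative estimate that drives the three implications.

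For \eqref{limborne_eq1} I would argue by contradiction: if $f/g\not\to 0$ there exist $\delta>0$ and $x_n\to\infty$ with $f(x_n)\ge\delta g(x_n)$; since $g(x_n)\to\infty$, monotonicity of $W$ combined with the slow variation just proved gives
$$\frac{W(f(x_n))}{W(g(x_n))}\ge\frac{W(\delta g(x_n))}{W(g(x_n))}\longrightarrow 1,$$
contradicting the hypothesis. For \eqref{limborne_eq2}, if $\limsup f/g>c>1$ then along some subsequence $f(x_n)\ge c\,g(x_n)$, and the byproduct above gives $W(f(x_n))-W(g(x_n))\ge W(cg(x_n))-W(g(x_n))\sim\ell(g(x_n))\log c\to\infty$, contradicting boundedness. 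Finally \eqref{limborne_eq3} follows by applying \eqref{limborne_eq2} to both $(f,g)$ and $(g,f)$, yielding $\limsup f/g\le 1$ and $\liminf f/g\ge 1$.

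The arguments are essentially routine once the right quantitative version of slow variation is in hand. The only place that requires a little care is the uniform convergence step leading to $W(cx)-W(x)\sim\ell(x)\log c$; this is a direct invocation of the uniform convergence theorem for measurable slowly varying functions and does not need Assumption \ref{assumw}(ii).
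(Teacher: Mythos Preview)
Your proof is correct and follows essentially the same route as the paper's. The paper cites \cite{BGT} (Proposition~1.5.9a and p.~127) for the slow variation of $W$ and for the key estimate $(W(\lambda x)-W(x))/\ell(x)\to\log\lambda$, whereas you derive the latter directly from the uniform convergence theorem; after that, your contrapositive arguments for \eqref{limborne_eq1} and \eqref{limborne_eq2} and the symmetry reduction for \eqref{limborne_eq3} are identical to the paper's.
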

\begin{proof} The fact that $W$ is slowly varying follows from
Proposition $1.5.9a$ of \cite{BGT}. Assume now that $\limsup f/g >\lambda >0$. Then, there exists an increasing sequence $(x_n)$ such
that
$$\limsup_{x\to\infty}\frac{W(f(x))}{W(g(x))}\ \ge\ \lim_{n\to\infty}\frac{W(\lambda g(x_n))}{W(g(x_n))}\, =\, 1.$$
which proves \eqref{limborne_eq1}. Concerning the second assertion,
the uniform convergence theorem for regularly varying functions shows
that, for $\lambda > 0$ (\emph{c.f.} \cite{BGT} p.$127$ for details),
$$\lim_{x \to \infty}\, \frac{W(\lambda x)-W(x)}{\ell(x)}=\log \lambda,$$
where $\ell$ is the slowly varying function associated with $w$.
Therefore, if $\limsup f/g
> \lambda
> 1$, there exist arbitrarily large $x$'s such that
$$
W(f(x))-W(g(x))\ge W(\lambda g(x))-W(g(x))\ge
\frac{1}{2}\log(\lambda)\ell(g(x)),
$$
which implies that $W(f(\cdot))-W(g(\cdot))$ is unbounded from
above. Finally, Assertion \eqref{limborne_eq3} follows from
\eqref{limborne_eq2} by symmetry.
\end{proof}

Given a measurable, non negative function $\psi :\R_+\to \R_+$, we
introduce the notation $W_\psi$ to denote the function
\begin{equation}\label{defWf}
W_\psi(x):=\int_0^x \frac{du}{w(u+\psi(u))}.
\end{equation}
In the linear case $\psi(u)=\eta u$ with $\eta>0$, we shall simply write
$W_\eta$ instead of $W_\psi$ (note that $W_0 = W$). The next result
is a slight refinement of \eqref{limborne_eq3}.
\begin{lem}\label{limWWchapeau}
Let $\psi$ be a measurable non-negative function such that
$$W(x)-W_{\psi}(x)=o(\ell(x)) \quad\hbox{as $x\to\infty$.}$$
Then, for any positive functions $f$ and $g$ with $\lim_{\infty} f=\lim_{\infty} g=+\infty$, we have
$$\sup_{x\geq 0}\big| W(f(x))-W_{\psi}(g(x))\big| < \infty \ \Longrightarrow \ \lim_{x\to\infty}\, \frac{f(x)}{g(x)}= 1.$$
\end{lem}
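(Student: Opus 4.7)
The plan is to reduce Lemma \ref{limWWchapeau} to the previous Lemma \ref{limborne} by writing
$$W(f(x)) - W(g(x)) = \bigl[W(f(x)) - W_\psi(g(x))\bigr] + \bigl[W_\psi(g(x)) - W(g(x))\bigr].$$
The first bracket is bounded by hypothesis. Since $\psi \geq 0$ and $w$ is non-decreasing, $W_\psi \leq W$, so the hypothesis $W(x) - W_\psi(x) = o(\ell(x))$ gives $|W_\psi(g(x)) - W(g(x))| = o(\ell(g(x)))$. So $W(f(x)) - W(g(x))$ is bounded plus a term of order $o(\ell(g(x)))$; one cannot directly apply \eqref{limborne_eq3}, but one can use the quantitative lower bounds on $W(f)-W(g)$ coming from the proof of \eqref{limborne_eq2} to derive a contradiction whenever $f/g$ does not tend to $1$.

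More precisely, I would argue by contradiction. Suppose first that $\limsup_{x\to\infty} f(x)/g(x) > 1$; pick $\lambda > 1$ and an increasing sequence $x_n \to \infty$ with $f(x_n) \geq \lambda g(x_n)$. By the uniform convergence theorem for regularly varying functions (as recalled in the proof of Lemma \ref{limborne}),
$$W(f(x_n)) - W(g(x_n)) \;\geq\; W(\lambda g(x_n)) - W(g(x_n)) \;\geq\; \tfrac{1}{2}\log(\lambda)\,\ell(g(x_n))$$
for $n$ large enough. Combined with $W(g(x_n)) - W_\psi(g(x_n)) = o(\ell(g(x_n)))$, this yields
$$W(f(x_n)) - W_\psi(g(x_n)) \;\geq\; \tfrac{1}{2}\log(\lambda)\,\ell(g(x_n)) - o(\ell(g(x_n))) \;\longrightarrow\; +\infty,$$
contradicting the boundedness assumption.

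Symmetrically, if $\liminf_{x\to\infty} f(x)/g(x) < 1$, pick $\mu \in (0,1)$ and a sequence $x_n \to \infty$ with $f(x_n) \leq \mu g(x_n)$. The same uniform convergence statement gives
$$W(g(x_n)) - W(f(x_n)) \;\geq\; W(g(x_n)) - W(\mu g(x_n)) \;\geq\; -\tfrac{1}{2}\log(\mu)\,\ell(g(x_n))$$
for $n$ large. Hence
$$W_\psi(g(x_n)) - W(f(x_n)) \;=\; \bigl[W(g(x_n)) - W(f(x_n))\bigr] - \bigl[W(g(x_n)) - W_\psi(g(x_n))\bigr] \;\longrightarrow\; +\infty,$$
again contradicting the hypothesis. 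Both cases being excluded, we conclude $\lim_{x\to\infty} f(x)/g(x) = 1$.

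The only mildly delicate point will be the justification that the uniform-convergence estimate for $W(\lambda x) - W(x)$ really provides a lower bound valid uniformly along the sequence $(x_n)$; this is exactly the content invoked in the proof of Lemma \ref{limborne} and transfers without change since $g(x_n) \to \infty$. The main simplification compared to \eqref{limborne_eq3} is that here we never need $W(f)-W(g)$ itself to be bounded, only that its potential blow-up be strictly slower than $\ell(g)$, which is what the hypothesis on $W - W_\psi$ provides.
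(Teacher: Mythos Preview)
Your proof is correct and follows essentially the same route as the paper: both arguments rest on the estimate $W(\lambda x)-W(x)\sim\log(\lambda)\,\ell(x)$ together with the hypothesis $W-W_\psi=o(\ell)$, and both split into the two directions $\limsup f/g\le 1$ and $\limsup g/f\le 1$. The only organizational difference is that the paper disposes of the first direction in one line by observing $W_\psi\le W$ (so $W(f)-W(g)\le W(f)-W_\psi(g)$ is bounded above and \eqref{limborne_eq2} applies directly), whereas you re-derive this via a sequence argument; for the second direction the paper writes the analogous decomposition $W(\lambda g)-W(f)=[W(\lambda g)-W(g)]+[W(g)-W_\psi(g)]+[W_\psi(g)-W(f)]$ and again appeals to \eqref{limborne_eq2}, which is exactly your contradiction argument repackaged.
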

\begin{proof} Since $\psi$ is non-negative, we have $W_\psi\le W$ thus Lemma \ref{limborne} yields $\limsup  f/g\leq
1$. Fix $0<\lambda<1$. We can write
\begin{equation*}
W(\lambda g(x))-W(f(x)) = W(\lambda g(x))-W(g(x)) +
W(g(x))-W_\psi(g(x)) + W_\psi(g(x))-W(f(x)).
\end{equation*}
Using the facts that $$W(x)-W_{\psi}(x)=o(\ell(x))\qquad \mbox{ and
} \qquad W(\lambda x)-W(x)\sim \log(\lambda)\ell(x),$$ we deduce
that, if $W_\psi(g(\cdot))-W(f(\cdot))$ is bounded from above, then
$W(\lambda g(\cdot))-W(f(\cdot))$ is also bounded from above. In
view of Lemma \ref{limborne}, this yields $\limsup g/f \leq
1/\lambda$ and we conclude the proof of the lemma letting $\lambda$
tend to $1$.
\end{proof}

\noindent We conclude this subsection by showing that the function
\begin{equation}\label{defphi2}
\Phi_{\eta,2}(x):= W^{-1}(\eta W(x/\eta))
\end{equation}
satisfies the hypothesis of the previous lemma for any $\eta\in
(0,1)$. As we have already mentioned in the introduction, the following
lemma is the only place in the paper where we require $\ell$ to be
eventually non-decreasing.

\begin{lem}\label{WL}
Assume that $w$ also satisfies (ii) of Assumption \ref{assumw}. Let
$\eta\in (0,1)$, we have
\begin{equation}\label{WLeq}
W(x)-W_{\Phi_{\eta,2}}(x)=o(\ell(x))  \quad\hbox{as $x\to\infty$.}
\end{equation}
Furthermore, there exists a non-decreasing function
$f_\eta:[0,\infty)\to [0,\infty)$ such that
\begin{eqnarray*}
&(a)& f_\eta \geq \Phi_{\eta,2}\\
&(b)& f_\eta = o(x) \\
&(c)& W(x) - W_{f_\eta}(x) = o(\ell(x))\\
&(d)& \lim_{x\to+\infty} W(x) - W_{f_\eta}(x) = +\infty.
\end{eqnarray*}
\end{lem}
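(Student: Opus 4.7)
The plan is to reduce both halves of the lemma to sharp estimates on the integral $\int_0^x \Phi_{\eta,2}(u)\ell(u)/u^2\,du$. First, the identity $W(\Phi_{\eta,2}(x))=\eta W(x/\eta)\sim\eta W(x)$ combined with Lemma~\ref{limborne}, equation \eqref{limborne_eq1}, yields $\Phi_{\eta,2}(x)=o(x)$. Writing $w(u)=u/\ell(u)$, the integrand of $W(x)-W_{\Phi_{\eta,2}}(x)$ splits as
\[
\frac{\ell(u)\,\Phi_{\eta,2}(u)}{u(u+\Phi_{\eta,2}(u))} + \frac{\ell(u)-\ell(u+\Phi_{\eta,2}(u))}{u+\Phi_{\eta,2}(u)}.
\]
Assumption~\ref{assumw}(ii) forces the second summand to be eventually non-positive, so its contribution is bounded above by a constant. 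Proving \eqref{WLeq} therefore reduces to establishing
\[
I(x):=\int_0^x \frac{\Phi_{\eta,2}(u)\,\ell(u)}{u^2}\,du = o(\ell(x)).
\]

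To estimate $I(x)$ I would substitute $y=W(u)$, turning the integral into $\int_0^{W(x)}\Phi_{\eta,2}(W^{-1}(y))/W^{-1}(y)\,dy$. The uniform convergence theorem applied to the slowly varying $W$ gives $W(W^{-1}(y)/\eta)=y+O(\ell(W^{-1}(y)))$, so $\Phi_{\eta,2}(W^{-1}(y))$ is comparable to $W^{-1}(\eta y)$, and the problem reduces to bounding the integral of $\rho(y):=W^{-1}(\eta y)/W^{-1}(y)$. Setting $M:=\log W^{-1}$ and differentiating, one gets
\[
M(y)-M(\eta y)=\int_{\eta y}^{y}\frac{ds}{\ell(W^{-1}(s))}\;\ge\;(1-\eta)\,\tau(W^{-1}(y)),\qquad \tau(u):=\frac{W(u)}{\ell(u)},
\]
where the inequality uses Assumption~\ref{assumw}(ii), so that $\ell(W^{-1}(s))\le\ell(W^{-1}(y))$ for $s\le y$. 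A short argument (comparing $W(2x)-W(x)\sim(\log 2)\ell(x)$, given by uniform convergence, with the hypothetical $W\sim C\ell$) shows $\tau(u)\to+\infty$, so $\rho$ is pointwise smaller than any negative power of $\tau$. A Laplace-type endpoint bound applied to the pulled-back integral $\int_0^x e^{-(1-\eta)\tau(u)}\ell(u)/u\,du$ then yields $I(x)=o(\ell(x))$.

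For the second half of the lemma, I would take $f_\eta(u):=\Phi_{\eta,2}(u)+\theta(u)$ where $\theta:[0,\infty)\to[0,\infty)$ is a non-decreasing step function with $\theta(u)=o(u)$, tailored so that $J(x):=\int_0^x \theta(u)\ell(u)/u^2\,du$ diverges yet remains $o(\ell(x))$. Because $\ell(x)\to+\infty$, one can construct such a $\theta$ explicitly by jumping on a sufficiently sparse sequence $u_1<u_2<\cdots$, each increment calibrated to add a fixed positive amount to $J$ while the partial sums stay below $\ell(x)$. Properties (a) and (b) are immediate. Rerunning the preceding decomposition with $f_\eta$ in place of $\Phi_{\eta,2}$ gives $W(x)-W_{f_\eta}(x)\le I(x)+J(x)+O(1)=o(\ell(x))$, which is (c). For (d), the inequality $W_{f_\eta}\le W_\theta$ (from $\Phi_{\eta,2}\ge 0$ and monotonicity of $w$) combined with the same decomposition applied to $\theta$ alone gives $W(x)-W_{f_\eta}(x)\ge J(x)-O(1)\to+\infty$.

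The main obstacle is the rate estimate on $\rho$: Potter bounds on $\ell$ only give $\rho(y)\to 0$ without a usable rate, so one must work with the explicit representation of $M(y)-M(\eta y)$ and exploit $\tau\to+\infty$. Assumption~\ref{assumw}(ii) is essential not only for signing the $\ell(u)-\ell(u+\Phi_{\eta,2}(u))$ correction in the first decomposition, but also for the lower bound on $M(y)-M(\eta y)$ used in the Laplace-type estimate. The construction of $\theta$ is then a technical but routine calibration around these bounds.
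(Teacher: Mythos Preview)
Your approach is essentially the paper's: the same decomposition via $w(u)=u/\ell(u)$ with Assumption~\ref{assumw}(ii) to discard the $\ell(u)-\ell(u+\Phi_{\eta,2}(u))$ term, the same reduction to $\int W^{-1}(\eta t)/W^{-1}(t)\,dt = o(\ell(W^{-1}(x)))$, the same function $M=\log W^{-1}$ (the paper writes $h$) with $M'(y)=1/\ell(W^{-1}(y))$, the same lower bound $M(y)-M(\eta y)\ge (1-\eta)y/\ell(W^{-1}(y))$ from monotonicity of $\ell$, and the same fact $\ell=o(W)$ to drive the endpoint estimate. For the second half, the paper also adds a non-decreasing perturbation to $\Phi_{\eta,2}$ and likewise leaves the calibration to the reader.

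Two points need tightening. First, your substitution $y=W(u)$ leaves you with $\Phi_{\eta,2}(W^{-1}(y))=W^{-1}\bigl(\eta W(W^{-1}(y)/\eta)\bigr)$, and the claimed comparability with $W^{-1}(\eta y)$ is not justified: the additive error $O(\ell(W^{-1}(y)))$ in the inner argument need not become a bounded multiplicative factor after applying $W^{-1}$. The paper avoids this entirely by substituting $t=W(u/\eta)$ instead (after using $w(u)\sim\eta\,w(u/\eta)$), which produces exactly $W^{-1}(\eta t)/W^{-1}(t)$; alternatively, the weaker bound $\Phi_{\eta,2}(W^{-1}(y))\le W^{-1}(\eta' y)$ for any fixed $\eta'\in(\eta,1)$ is immediate from slow variation of $W$ and suffices. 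Second, your argument for (d) does not work as written: ``the same decomposition'' applied to $\theta$ gives $W-W_\theta \le J + O(1)$, not $\ge$, because the $\ell(u)-\ell(u+\theta(u))$ contribution is non-positive and its integral is not a priori bounded below. You need to build $\theta$ so that $W-W_{f_\eta}$ itself (rather than the proxy $J$) diverges while remaining $o(\ell)$; controlling $J$ alone secures (c) but not (d).
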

\begin{proof} Choose $x_0$ large enough such that $\ell$ is non-decreasing on $[x_0,\infty)$.
Let $C:= W(x_0)-W_{\Phi_{\eta,2}}(x_0)$. For $x\ge x_0$, we get
\begin{eqnarray*}
W(x)-W_{\Phi_{\eta,2}}(x) &=& C+\int_{x_0}^x
\left(\frac{\ell(u)}{u}-
\frac{\ell(u+\Phi_{\eta,2}(u))}{u+\Phi_{\eta,2}(u)}\right)\, du\\
&\le& C+ \int_{x_0}^x \frac{\ell(u)\Phi_{\eta,2}(u)}{u^2} \, du \\
&=& C+ \int_{x_0}^x \frac{W^{-1}(\eta W(u/\eta))}{ w(u) u}\, du\\
&\leq& C' + \frac{2}{\eta} \int_{x_0}^x \frac{W^{-1}(\eta
W(u/\eta))}{ w(u/\eta) u}\,du,
\end{eqnarray*}
where we used  $\eta w(u/\eta)\sim w(u)$ as $u\to\infty$ and
where $C'$ is a finite constant. From the change of variable $t =
W(u/\eta)$, it follows that
\begin{equation*}
W(x)-W_{\Phi_{\eta,2}}(x)\le C' +
\frac{2}{\eta}\int_{W(x_0/\eta)}^{W(x/\eta)}\frac{W^{-1}(\eta
t)}{W^{-1}(t)}\,dt.
\end{equation*}
Now let
$$
J_\eta(x) := \int_0^x \frac{W^{-1}(\eta u)}{W^{-1}(u)}\, du,
$$
which is well-defined since $\lim_{u\to 0} W^{-1}(\eta
u)/W^{-1}(u) = \eta$. It remains to prove that
\begin{equation}\label{techJb}
J_\eta(x) = o(\ell(W^{-1}(x)))\quad\hbox{when $x\to\infty$,}
\end{equation}
as this will entail
\begin{equation*}
W(x)-W_{\Phi_{\eta,2}}(x) \leq C' + \frac{2}{\eta}J_\eta(W(x/\eta))
 = o(\ell(x/\eta)) = o(\ell(x)).
\end{equation*}
In order to establish \eqref{techJb}, we consider the function $h(x)
:= \log W^{-1}(x) $. This function is non-decreasing and
$$h'(x)=\frac{w(W^{-1}(x))}{W^{-1}(x)}=\frac{1}{\ell(W^{-1}(x))}.$$
Thus, we need to prove that
$$\lim_{x\to\infty}\, h'(x) J_\eta(x)\, =\, \lim_{x\rightarrow \infty}\, h'(x)\int_0^x e^{h(\eta u)-h(u)}\, du\, =\, 0.$$
Using that $h'$ is non-increasing, we get, for any $A\in [0,z]$,
\begin{eqnarray*}
J_\eta(x)&\le& \int_0^x e^{-(1-\eta)uh'(u)}\, du \\
&\le & \int_0^A e^{-(1-\eta)uh'(A)}\, du+ \int_A^\infty e^{-(1-\eta)uh'(x)}\, du \\
&=& \frac{1}{(1-\eta)h'(A)}+
\frac{e^{-(1-\eta)Ah'(x)}}{(1-\eta)h'(x)}.
\end{eqnarray*}
According to Equation $1.5.8$ of \cite{BGT} p.$27$, we have
$\ell(x)=o(W(x))$ hence
$$1/h'(x)=\ell(W^{-1}(x))=o(x)\quad\hbox{as $x\to\infty$.}$$
Fix $\varepsilon>0$ and set $A:= A(x) =
1/(\sqrt{\varepsilon}h'(x))$. Then, for all $x$ large enough such
that $1/h'(A)\le \varepsilon A$, we get
$$(1-\eta)h'(x) J_\eta(x)\le \sqrt{\varepsilon}+e^{-(1-\eta)/\sqrt{\varepsilon}},$$
which completes the proof of \eqref{WLeq}.

Concerning the second part of the lemma, it follows from Lemma
\ref{limborne} that $\Phi_{\eta,2}(x) = o(x)$ for any $0<\eta<1$ (see
also Lemma \ref{lemmtech}). Hence, if $\lim_\infty W
-W_{\Phi_{\eta,2}} = \infty$, then we can simply choose $f_\eta =
\Phi_{\eta,2}$. Otherwise, we can always construct a positive
non-decreasing function $h$ such that $f_\eta := \Phi_{\eta,2} + h$
is a solution (for instance, one can construct $h$ continuous with
$h(0)=0$, piecewise linear, flat on intervals $[x_{2n},x_{2n+1}]$
and with slope $1/n$ on the intervals $[x_{2n+1},x_{2n+2}]$ where
$(x_i)_{i\geq0}$ is a suitably chosen increasing sequence). The
technical details are left to the reader.
\end{proof}

\subsection{Properties of the indexes $i_\pm(w)$}

Recall the construction of the family $(i_\eta(w),\eta\in(0,1) )$
from the operator $G$ defined in \eqref{opG}. In this subsection, we
collect some useful results concerning this family. We show in
particular that the map $\eta\mapsto i_\eta(w)$ can take at most two
different (consecutive) values. In order to do so, we provide an
alternative description of these parameters in term of another
family $(j_\eta,\eta\in(0,1) )$ defined using another operator $h$
whose probability interpretation will become clear in the next
sections. More precisely, let $H$ be the operator which, to each
homeomorphism $f: [0,\infty) \to [0,\infty)$, associates the
function $H(f): [0,\infty) \to [0,\infty)$ defined by
\begin{equation}\label{defH}
H(f)(x):=W^{-1}\left(\int_0^x\frac{du}{w(f^{-1}(u))}\right)\qquad\hbox{for
$x\geq 0$,} \end{equation} where $f^{-1}$ stands for the inverse of
$f$. If $H(f)$ is unbounded, then it is itself an homeomorphism.
Thus, for each $\eta\in(0,1)$, we can define by induction the
(possibly finite) sequence of functions $(\Phi_{\eta,j}, 1 \leq
j\leq j_\eta(w) )$ by
\begin{equation}\label{defphi}
\left\{
\begin{array}{ll}
\Phi_{\eta,1}:= \eta\hbox{Id}& \\
\Phi_{\eta,j+1}:= H(\Phi_{\eta,j}) & \hbox{ if $\Phi_{\eta,j}$ is
unbounded,}
\end{array}\right.
\end{equation}
where
\begin{equation*}
j_\eta(w):=\inf\{j\ge 1\; :\; \Phi_{\eta,j}\ \textrm{ is bounded}\}.
\end{equation*}
We use the convention $\Phi_{\eta,j} = 0$ for $j > j_\eta(w)$. Let us
remark that this definition of $\Phi_{\eta,2}$ coincides with the
previous definition given in \eqref{defphi2}. In particular,
$\Phi_{\eta,2}$ is always unbounded, which implies
$$j_\eta(w) \in \lin 3,+\infty\rin.$$
\begin{lem}
\label{propH} The operator $H$ is monotone in the following sense:
\begin{itemize}
\item[\textup{(i)}] If $f\le g$, then $H(f) \le H(g)$.
\item[\textup{(ii)}] If $f(x)\le g(x)$, for all $x$ large enough and  $H(f)$ is unbounded,
then $\limsup H(f)/H(g) \le 1$.
\end{itemize}
\end{lem}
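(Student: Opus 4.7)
Both statements will follow from tracking how the integrand $1/w(f^{-1}(\cdot))$ in the definition \eqref{defH} of $H(f)$ depends on $f$, and then exploiting that $W^{-1}$ is increasing together with the slow variation estimates of Lemma \ref{limborne}.

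For (i), I would argue by straightforward monotonicity. If $f\le g$ as homeomorphisms of $[0,\infty)$, then inverting reverses the order: $f^{-1}\ge g^{-1}$ pointwise. Since $w$ is non-decreasing, $w(f^{-1}(u))\ge w(g^{-1}(u))$, so $1/w(f^{-1}(u))\le 1/w(g^{-1}(u))$ for every $u\geq 0$. Integrating yields
\[
\int_0^x \frac{du}{w(f^{-1}(u))}\;\le\;\int_0^x \frac{du}{w(g^{-1}(u))},
\]
and applying the increasing map $W^{-1}$ gives $H(f)(x)\le H(g)(x)$.

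For (ii), the hypothesis $f\le g$ only holds for $x$ large enough, so the same pointwise argument gives only an eventual inequality of the integrands. The key observation is that the discrepancy contributed by the bounded region is bounded. More precisely, set
\[
I_f(x):=\int_0^x\frac{du}{w(f^{-1}(u))},\qquad I_g(x):=\int_0^x\frac{du}{w(g^{-1}(u))},
\]
so that $H(f)=W^{-1}(I_f)$ and $H(g)=W^{-1}(I_g)$. Choosing $u_0$ large enough that $f^{-1}(u)\ge g^{-1}(u)$ for $u\ge u_0$, the integrand of $I_g-I_f$ is non-negative on $[u_0,\infty)$, and the contribution on $[0,u_0]$ is bounded by some finite constant $K$. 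Hence $I_f(x)-I_g(x)\le K$ uniformly in $x$, which rewrites as
\[
\sup_{x\ge 0}\bigl(W(H(f)(x))-W(H(g)(x))\bigr)\;\le\; K<\infty.
\]

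To conclude, I need to check that both $H(f)$ and $H(g)$ tend to $+\infty$ so that \eqref{limborne_eq2} of Lemma \ref{limborne} applies. $H(f)\to\infty$ since it is non-decreasing and unbounded by assumption, equivalently $I_f(x)\to\infty$. From $I_g\ge I_f-K$ we get $I_g\to\infty$ as well, whence $H(g)=W^{-1}(I_g)\to\infty$. Applying \eqref{limborne_eq2} with $f:=H(f)$ and $g:=H(g)$ then yields $\limsup_{x\to\infty} H(f)(x)/H(g)(x)\le 1$, as required. The only mildly subtle point is ensuring that the constant $K$ absorbs \emph{all} possible discrepancy from the region where the eventual comparison fails; this is where the $W$–difference (rather than ratio) formulation and Lemma \ref{limborne}\eqref{limborne_eq2} are essential, since a ratio bound on the $I$'s would not translate cleanly through the slowly varying function $W^{-1}$.
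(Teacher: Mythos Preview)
Your proof is correct and follows the natural straightforward argument; the paper itself omits the proof as ``straightforward,'' and your write-up is exactly the kind of routine verification the authors had in mind.
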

The proof of the lemma is straightforward so we omit it. The
following technical results will be used in many places throughout
the paper.

\begin{lem}
\label{lemmtech} Let $0<\eta<\eta'<1$ and $\lambda>0$. For all
$j\in \lin 2, j_\eta(w)-1\rin$, we have, as $x\to\infty$,
\begin{eqnarray*}
\textup{(i)} &\!&\!\!\!\! \Phi_{\eta,j}(x)=o(x),\\
\textup{(ii)}&\!&\!\!\!\! \Phi_{\eta,j}(\lambda x)=
o(\Phi_{\eta',j}(x)).
\end{eqnarray*}
\end{lem}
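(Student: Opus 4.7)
I plan to prove (i) and (ii) together by induction on $j$, relying throughout on Lemma \ref{limborne}, especially the implication \eqref{limborne_eq1}: every claim of the form $f = o(g)$ (with $f,g\to\infty$) will be reduced to checking $\limsup W(f)/W(g) < 1$. The base case $j = 2$ is immediate from the explicit formula $W(\Phi_{\eta,2}(x)) = \eta W(x/\eta)$ coming from \eqref{defphi2}: slow variation of $W$ yields $W(\Phi_{\eta,2}(\lambda x))/W(x) \to \eta < 1$ and $W(\Phi_{\eta,2}(\lambda x))/W(\Phi_{\eta',2}(x)) \to \eta/\eta' < 1$, which give (i) and (ii) at $j=2$.

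For the inductive step, I assume (i) and (ii) hold at level $j \le j_\eta(w)-2$. Part (i) at level $j+1$ is the easy direction: from $\Phi_{\eta,j}(u) = o(u)$ one gets $\Phi_{\eta,j}^{-1}(u)/u \to \infty$, and hence $w(\Phi_{\eta,j}^{-1}(u))/w(u) \to \infty$ by monotonicity and regular variation of $w$; integrating $W(\Phi_{\eta,j+1}(x)) = \int_0^x du/w(\Phi_{\eta,j}^{-1}(u))$ then produces $W(\Phi_{\eta,j+1}(x)) = o(W(x))$, and \eqref{limborne_eq1} finishes it.

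Part (ii) at $j+1$ is the main obstacle, and the point where the full strength of the inductive hypothesis is needed. The plan is to exploit (ii) at level $j$ \emph{for every} $\lambda > 0$: substituting $y = \Phi_{\eta',j}^{-1}(u)$ in the relation $\Phi_{\eta,j}(\lambda y) = o(\Phi_{\eta',j}(y))$ and letting $\lambda$ be arbitrarily large produces $\Phi_{\eta,j}^{-1}(v)/\Phi_{\eta',j}^{-1}(v) \to \infty$, and regular variation of $w$ then upgrades this to the pointwise estimate $1/w(\Phi_{\eta,j}^{-1}(v)) = o(1/w(\Phi_{\eta',j}^{-1}(v)))$. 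Integrating gives, for any $\varepsilon > 0$ and $x$ large,
$$W(\Phi_{\eta,j+1}(\lambda x)) \le C + \varepsilon\, W(\Phi_{\eta',j+1}(\lambda x)).$$
The final delicate task is to control $W(\Phi_{\eta',j+1}(\lambda x))$ by a constant multiple of $W(\Phi_{\eta',j+1}(x))$ uniformly in $x$. For this I plan to use that the integrand $h(u) := 1/w(\Phi_{\eta',j}^{-1}(u))$ is non-increasing, so that the elementary bound $\int_{x/2}^x h \ge (x/2) h(x)$ together with the trivial estimate $\int_x^{\lambda x} h \le (\lambda-1) x h(x)$ for $\lambda \ge 1$ yields the uniform inequality $W(\Phi_{\eta',j+1}(\lambda x)) \le K_\lambda W(\Phi_{\eta',j+1}(x))$ with $K_\lambda = \max(1, 2\lambda - 1)$. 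Choosing $\varepsilon < 1/(2 K_\lambda)$ then gives $\limsup W(\Phi_{\eta,j+1}(\lambda x))/W(\Phi_{\eta',j+1}(x)) \le 1/2$, and one last application of \eqref{limborne_eq1} (noting that $\Phi_{\eta,j+1}, \Phi_{\eta',j+1}$ both tend to infinity since $j+1 \le j_\eta(w)-1 \le j_{\eta'}(w)-1$) closes the induction.
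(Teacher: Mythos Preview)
Your proof is correct. The overall inductive strategy---using the hypothesis (ii) \emph{for all} $\lambda$ to force $\Phi_{\eta,j}^{-1}/\Phi_{\eta',j}^{-1}\to\infty$---matches the paper's, but the execution differs. The paper phrases everything through the operator $H$ and its monotonicity (Lemma~\ref{propH}): for (i) it observes that $\Phi_{\eta,j-1}=o(x)$ forces $\Phi_{\eta,j-1}\le\Phi_{\eta,1}$ eventually, whence $\limsup\Phi_{\eta,j}/\Phi_{\eta,2}\le 1$; for (ii) it bounds $\Phi_{\eta,j+1}$ by $H\big(\delta\,\Phi_{\eta',j}(\cdot/\lambda)\big)$, computes this via a change of variables as $W^{-1}\!\big(C+\tfrac{2\delta}{\lambda}W(\Phi_{\eta',j+1}(x/\delta))\big)$, and then invokes the fact that $W^{-1}(\varepsilon f+C)=o(W^{-1}(f))$ for $\varepsilon<1$. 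You instead work directly with the integral $W(\Phi_{\eta',j+1}(x))=\int_0^x du/w(\Phi_{\eta',j}^{-1}(u))$ and use an elementary doubling bound (monotonicity of the integrand gives $\int_{x/2}^x h\ge \tfrac{x}{2}h(x)\ge \tfrac{1}{2(\lambda-1)}\int_x^{\lambda x}h$) to control $W(\Phi_{\eta',j+1}(\lambda x))$ by $K_\lambda\,W(\Phi_{\eta',j+1}(x))$. Your route is more self-contained---it avoids Lemma~\ref{propH} entirely and the doubling trick is pleasantly low-tech---while the paper's route keeps the argument at the level of the operator $H$, which is convenient for the later structural results on $i_\eta$ and $j_\eta$.
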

\begin{proof} As we already mentioned, we have $W(\Phi_{\eta,2}(x))=\eta
W(x/\eta)$ hence Lemma \ref{limborne} implies that
$\Phi_{\eta,2}(x)=o(x)$ and (i) follows from Lemma \ref{propH}. We
prove (ii) by induction on $j$. Recalling that $W$ is slowly
varying, we have
\begin{equation*}
\limsup_{x\to\infty} \frac{W(\Phi_{\eta,2}(\lambda
x))}{W(\Phi_{\eta',2}(x))}=\frac{\eta}{\eta'}\, \limsup_{x\to\infty}
\frac{W(\lambda x/\eta)}{W(x/\eta')}=\frac{\eta}{\eta'}<1,
\end{equation*}
 which, by using Lemma \ref{limborne}, yields $\Phi_{\eta,2}(\lambda x)=o(\Phi_{\eta',2}(x))$.
Let us now assume that for some $j < j_\eta(w)-1$,
$\Phi_{\eta,j}(x)=o(\Phi_{\eta',j}(x/\lambda))$ for all $\lambda
>0$. Fix $\delta>0$. Using again the monotonicity
property of $H$, we deduce that
\begin{equation*}
\limsup_{x\to\infty} \frac{\Phi_{\eta,j+1}(x)}{H\left(\delta
\Phi_{\eta',j}\left(\frac{\cdot}{\lambda}\right)\right)(x)} \leq 1.
\end{equation*}
Notice that
\begin{equation*}
H\left(\delta\Phi_{\eta',j}\left(\frac{\cdot}{\lambda}\right)\right)(x)
\ = \ W^{-1}\left( \int_0^{x/\delta}\frac{\delta
dt}{w(\lambda(\Phi_{\eta',j}^{-1}(t)))}\right) \ \le \ W^{-1}\left(
C+\frac{2\delta}{\lambda}W\left(\Phi_{\eta',j+1}\left(\frac{x}{\delta}\right)\right)\right),
\end{equation*}
where we used that $w(\lambda x)\le 2\lambda w(x)$, for $x$ large
enough and where $C$ is some positive constant. Moreover, Lemma
\ref{limborne} shows that, for $C>0$, $\varepsilon \in (0,1)$ and
any positive unbounded function $f$, we have
\begin{equation*}
W^{-1}(\varepsilon f(x)+C)=o(W^{-1}( f(x)).
\end{equation*}
Hence, choosing $\lambda$ such that $2\delta< \lambda$, we find that
$$H\left(\delta\Phi_{\eta',j}\left(\frac{\cdot}{\lambda}\right)\right)(x)=o\left(
\Phi_{\eta',j+1}(\frac{x}{\delta})\right),$$ which concludes the
proof of the lemma.
\end{proof}

\noindent We can now prove the main result of this section which
relates $j_\eta(w)$ and $i_\eta(w)$.

\begin{prop}\label{prop_iegalj} The maps $\eta\mapsto i_{\eta}(w)$ and $\eta\mapsto
j_{\eta}(w)$ are non-decreasing and take at most two consecutive
values. Moreover, at each continuity point $\eta$ of $j_\eta(w)$, we
have
\begin{equation}\label{iegalj}
j_{\eta}(w)=i_{\eta}(w)+1.
\end{equation}
\end{prop}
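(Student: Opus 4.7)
The proof proceeds in three stages.

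First I would establish the monotonicity claims. Since $w$ is non-decreasing, the operator $G$ is monotone in its argument, so iterating yields $G^{(n)}(\eta_1\,\mathrm{Id}) \leq G^{(n)}(\eta_2\,\mathrm{Id})$ whenever $\eta_1 \leq \eta_2$; boundedness of the larger function forces boundedness of the smaller, giving $i_{\eta_1} \leq i_{\eta_2}$. The corresponding statement for $\eta \mapsto j_\eta$ follows from Lemma \ref{propH}(i) applied inductively to the recursion $\Phi_{\eta,j+1} = H(\Phi_{\eta,j})$, starting from $\Phi_{\eta,1} = \eta\,\mathrm{Id}$.

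The heart of the proof is the identification $j_\eta = i_\eta + 1$ at continuity points. The guiding observation is the identity $W(\Phi_{\eta,2}(x)) = \eta\,W(x/\eta)$ which, by slow variation of $W$, is asymptotic to $\eta\,W(x)$: this suggests that the two recursive schemes are conjugate to one another through $W$. I would prove by induction on $m \geq 0$ that for any triple $0 < \eta < \eta' < \eta'' < 1$ and any $m$ for which $\Phi_{\eta', m+2}$ is still unbounded, there is a sandwich
\begin{equation*}
G^{(m)}(\eta\,\mathrm{Id})\circ W \;\leq\; W \circ \Phi_{\eta',m+2} + O(1) \;\leq\; G^{(m)}(\eta''\,\mathrm{Id})\circ W + O(1).
\end{equation*}
The base case $m=0$ reduces to the explicit identity above together with the sharp estimate $W(cx) - W(x) = \log(c)\,\ell(x) + o(\ell(x))$ invoked in Lemma \ref{limborne}. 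For the inductive step, one rewrites $W(\Phi_{\eta', m+3}(x)) = \int_0^x du/w(\Phi_{\eta', m+2}^{-1}(u))$, performs the change of variable $u = W(v)$ to make a $G$-type integral appear, and invokes the inductive hypothesis. The slowly varying error terms that accumulate through iteration are absorbed using Lemmas \ref{limWWchapeau} and \ref{WL}; the majorant $f_\eta \geq \Phi_{\eta, 2}$ produced in Lemma \ref{WL}, with its strict divergence $W - W_{f_\eta} \to \infty$, is precisely the device that guarantees the fixed parameter gap $\eta'' - \eta > 0$ always dominates the error.

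From the sandwich, the remaining conclusions follow. The right-hand inequality shows that if $G^{(m)}(\eta''\,\mathrm{Id})$ is bounded then so is $W\circ\Phi_{\eta', m+2}$; since $W$ is a homeomorphism with $\lim_\infty W = \infty$, $\Phi_{\eta', m+2}$ itself is bounded, yielding $j_{\eta'} \leq i_{\eta''} + 1$ for any $\eta' < \eta''$. The left-hand inequality yields the symmetric $i_\eta + 1 \leq j_{\eta'}$ for $\eta < \eta'$. At a continuity point $\eta_0$ of the monotone, integer-valued map $j_\cdot$, letting $\eta \uparrow \eta_0$ and $\eta'' \downarrow \eta_0$ gives $j_{\eta_0} = i_{\eta_0} + 1$. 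The \emph{at most two consecutive values} statement is obtained by combining the sandwich with the observation, rooted in Lemma \ref{lemmtech}, that once $\Phi_{\eta, j}$ is unbounded it is already strictly sub-linear; applying $H$ one more time (equivalently, applying $G$ one more time to the conjugate iterate) then produces a bounded function regardless of how close $\eta$ is to $1$, because the integrand defining the next iterate decays rapidly enough to be integrable. Consequently, the thresholds $\sup\{\eta : \Phi_{\eta, j}\text{ bounded}\}$ can jump at most once as $j$ varies, so the ranges of $i_\cdot$ and $j_\cdot$ consist of at most two consecutive integers each.

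The main technical obstacle lies in the inductive step of the sandwich: each application of $G$ or $H$ introduces corrections of order $\ell$ (and potentially worse after several iterations), and controlling them requires precisely the strict gap $W - W_{f_\eta} \to \infty$ afforded by Lemma \ref{WL}. The three-parameter sandwich $\eta < \eta' < \eta''$ is tailored to this end; without the slack between the parameters, the induction would not close.
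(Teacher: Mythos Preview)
Your monotonicity argument is fine and matches the paper. The rest of the proposal, however, has two genuine gaps.

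\medskip

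\textbf{The sandwich is not the right object.} You compare $G^{(m)}(\eta\,\mathrm{Id})\circ W$ with $W\circ\Phi_{\eta',m+2}$, i.e.\ with the conjugate $\varphi_{\eta',m+2}:=W\circ\Phi_{\eta',m+2}\circ W^{-1}$. But the recursion satisfied by $\varphi_{\cdot,j}$ is
\[
\varphi_{\eta,j+1}(x)=\int_0^x \frac{w(W^{-1}(z))}{w(W^{-1}(\varphi_{\eta,j}^{-1}(z)))}\,dz,
\]
which involves the \emph{inverse} of the previous iterate and is not the operator $G$. Your change of variable $u=W(v)$ in $W(\Phi_{\eta',m+3}(x))=\int_0^x du/w(\Phi_{\eta',m+2}^{-1}(u))$ does not produce a $G$-type integral: the upper limit is $x$, not $W(x)$, and after the substitution you are left with $\Phi^{-1}\circ W$ inside $w$, which does not match the inductive hypothesis. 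The paper circumvents this by passing to the \emph{compositions} $h_{\eta,j}:=\varphi_{\eta,j}\circ\cdots\circ\varphi_{\eta,1}$, which (after one further change of variable) satisfy
\[
h_{\eta,j+1}(x)=\eta\int_0^x \frac{w\!\left(W^{-1}(h_{\eta,j}(u))\right)}{w(\eta W^{-1}(u))}\,du,
\]
an expression now visibly close to $G(h_{\eta,j})$ up to the harmless factor $w(\eta W^{-1}(u))\sim\eta\, w(W^{-1}(u))$. The sandwich one actually proves is $g_{\alpha,j}=o(h_{\eta,j+1})=o(g_{\beta,j})$ for $\alpha<\eta<\beta$; since $h_{\eta,j}$ is bounded iff $\Phi_{\eta,j}$ is, the conclusion follows. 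Your invocation of Lemmas~\ref{limWWchapeau} and~\ref{WL} is also misplaced: those lemmas concern $W_\psi$ and are used later for the walk $\widehat X$, not in this proposition at all.

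\medskip

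\textbf{The ``two consecutive values'' argument is incorrect.} Sub-linearity of $\Phi_{\eta,j}$ does \emph{not} force $H(\Phi_{\eta,j})$ to be bounded: $\Phi_{\eta,2}$ is always sub-linear and unbounded, yet $\Phi_{\eta,3}$ can be unbounded as well. The actual mechanism is Lemma~\ref{lemmtech}(ii), which gives $\Phi_{\eta,j}(\lambda x)=o(\Phi_{\eta',j}(x))$ for \emph{every} pair $\eta<\eta'$; combining this with Lemma~\ref{propH}(ii) yields $\Phi_{\eta',j+1}=o(\Phi_{\eta,j})$ for \emph{all} $\eta,\eta'\in(0,1)$, even when $\eta'>\eta$. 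Hence if $\Phi_{\eta,j_\eta}$ is bounded then $\Phi_{\eta',j_\eta+1}$ is bounded for every $\eta'$, giving $j_{\eta'}\le j_\eta+1$.
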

\begin{proof} It is clear that the monotonicity result of Lemma \ref{propH} also holds for the operator $G$ defined
by \eqref{opG}. Thus, both functions $\eta\mapsto j_\eta(w)$ and
$\eta\mapsto i_\eta(w)$ are non-decreasing.  Moreover, according to
(i) of the previous lemma, we have $\Phi_{\eta',2}=o(\Phi_{\eta,1})$
for any $\eta,\eta' \in (0,1)$. Combining (ii) of Lemma \ref{propH}
with (ii) of the previous lemma, we deduce that
$\Phi_{\eta',3}=o(\Phi_{\eta,2})$ for any $\eta,\eta' \in (0,1)$.
Repeating this argument, we conclude by induction that $j_{\eta'}(w)
\leq j_{\eta}(w) +1$ which proves that $\eta\mapsto j_{\eta}(w)$
takes at most two different values. The same property will also hold
for $i_\eta(w)$ as soon as we establish \eqref{iegalj}.

Define $\varphi_{\eta,j}:=W\circ \Phi_{\eta,j}\circ W^{-1}$. Using
the change of variable $z=W(u)$ in \eqref{defH}, we find that, for
$j< j_\eta(w)$,
\begin{equation}\label{defvarphi}
\varphi_{\eta,j+1}(x)=\int_0^x \frac{w\circ W^{-1}(z)}{w\circ W^{-1}
\circ \varphi_{\eta,j}^{-1}(z)}\, dz.
 \end{equation}
Define by induction
\begin{equation*}
\left\{
\begin{array}{ll}
h_{\eta,1} := \varphi_{\eta,1},& \\
h_{\eta,j+1}:=\varphi_{\eta,j+1} \circ h_{\eta,j} & \hbox{for
$j\ge 1$.}\\
\end{array}
\right.
\end{equation*}
We have $h_{\eta,j}=W\circ \Phi_{\eta,j}\circ\ldots \circ
\Phi_{\eta,1}\circ W^{-1}$ thus
$$j_\eta(w)=\inf\{j\ge 3\; :\; h_{\eta,j}\ \hbox{ is bounded}\}.$$
Note that $h_{\eta,2}(x)=\Phi_{\eta,1}(x)=\eta x$. Furthermore,
using the change of variable $z=h_{\eta,j}(u)$ in \eqref{defvarphi},
it follows by induction that, for $j < j_\eta(w)$,
\begin{eqnarray}
\label{hbar} h_{\eta,j+1}(x)&=& \eta\, \int_0^x \frac{w\circ
W^{-1}\circ h_{\eta,j}(u)}{w(\eta W^{-1}(u))}\, du.
\end{eqnarray}
Define also the sequence $(g_{\eta,j})_{j\ge 1}$, by
\begin{equation*}
g_{\eta,j}:=G^{(j-1)}(\Phi_{\eta,1}).
\end{equation*}
Recall that, by definition,
$$i_\eta(w)=\inf\{j\ge 2 \; :\; g_{\eta,j} \ \mbox{ is bounded}\}. $$
Using Lemma \ref{limborne}, it now follows by induction from
\eqref{opG} and \eqref{hbar} that for $\alpha<\eta<\beta$ and $j\ge
2$,
\begin{eqnarray}
\label{hbarh}
\left\{
\begin{array}{ll}
g_{\alpha,j}(x)=o(h_{\eta,j+1}(x)) & \mbox{ as long as $h_{\eta,j+1}$ is unbounded,}\\
h_{\eta,j+1}(x)=o(g_{\beta,j}(x)) & \mbox{ as long as $g_{\beta,j}$
is unbounded.}
\end{array}
\right.
\end{eqnarray}
Therefore,
$$i_\alpha(w)+1\le j_\eta(w) \le i_\beta(w)+1\qquad \mbox{ for all $\alpha<\eta<\beta$},$$
which proves that $j_\eta(w)=i_\eta(w)+1$ if the map $j_\eta(w)$ is
continuous at point $\eta$.
\end{proof}

\subsection{Proof of  Proposition
\ref{calculi}}

For $\eta \in (0,1)$, define
 $$i_{\eta,\pm}(w):=\lim_{\delta \to \eta^\pm} i_\delta(w).$$
In accordance with \eqref{defiplusmoins}, we have $i_{\pm}(w) =
i_{1/2,\pm}(w)$. Given another weight function $\tilde{w}$, we will
use the notation $\tilde{W},\tilde{\Phi},\ldots$ to denote the
quantities $W,\Phi,\ldots$ constructed from $\tilde{w}$ instead of
$w$. The following result compares the critical indexes
$i_{\eta,\pm}$ of two weight functions.
\begin{prop}\label{wequiv}
Let $w,\tilde w$ denote two weight functions and let $\eta \in
(0,1)$.
\begin{itemize}
\item[(i)]  If $w(x)\sim \tilde w(x)$, then  $i_{\eta,\pm}(w)=
i_{\eta,\pm}(\tilde w)$.
\item[(ii)] If the function $(w\circ W^{-1})/(\tilde{w}\circ \tilde W^{-1})$ is eventually non-decreasing, then
$i_{\eta,\pm}(w) \le i_{\eta,\pm}(\tilde w)$.
\end{itemize}
\end{prop}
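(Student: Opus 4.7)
The plan is to reduce both statements, via Proposition \ref{prop_iegalj}, to comparisons of the iterates $\Phi_{\eta,j}$ of the two weight functions: since $\eta\mapsto j_\eta$ is non-decreasing and takes at most two consecutive values, the limits $i_{\eta,\pm}$ are determined by the one-sided limits of $j_\delta$ near $\eta$. So for both parts it suffices to establish the corresponding (in)equality between $j_\delta(w)$ and $j_{\delta'}(\tilde w)$ for appropriate $\delta,\delta'$, and then let them tend to $\eta$ from the appropriate sides.

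For (ii), I will prove by induction on $j$ that, for every $0<\delta<\delta'<1$ and $x$ sufficiently large,
\begin{equation*}
\varphi^w_{\delta,j}(x)\;\le\; \tilde\varphi^{\tilde w}_{\delta',j}(x)+C_j,
\end{equation*}
where $\varphi_{\eta,j}:= W\circ\Phi_{\eta,j}\circ W^{-1}$ satisfies the recursion \eqref{defvarphi}, which depends on $w$ only through $\psi:= w\circ W^{-1}$. The inductive step exploits the hypothesis that $\psi/\tilde\psi$ is eventually non-decreasing together with the fact that $(\varphi^w_{\delta,j})^{-1}(z)\ge z$ for $z$ large (a consequence of $\Phi_{\eta,j}(y)=o(y)$ from Lemma~\ref{lemmtech}(i)): applying the monotonicity inequality $\psi(z)/\psi(y)\le \tilde\psi(z)/\tilde\psi(y)$ at $y=(\varphi^w_{\delta,j})^{-1}(z)\ge z$, and using the induction hypothesis (after inversion) to compare $(\varphi^w_{\delta,j})^{-1}$ with $(\tilde\varphi^{\tilde w}_{\delta',j})^{-1}$, one controls the integrand of $\varphi^w_{\delta,j+1}$ by that of $\tilde\varphi^{\tilde w}_{\delta',j+1}$ up to a manageable error.

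The main technical obstacle is absorbing the additive constant $C_j$ as it propagates through the inverse, since the shift it induces in the argument of $\psi$ is a priori uncontrolled. I plan to handle this by inserting an intermediate parameter $\delta''\in(\delta,\delta')$ and running the induction with the pair $(\delta,\delta'')$: the identity $z-\tilde\varphi^{\tilde w}_{\delta'',j}(z)\to\infty$ (obtained by integrating the relation $\tilde\Phi^{\tilde w}_{\delta'',j}(y)=o(y)$ against $1/\tilde w$ and using $\tilde\ell\to\infty$) provides the slack needed to swallow the shift, while the monotonicity of $\eta\mapsto j_\eta(\tilde w)$ ensures that boundedness of $\tilde\varphi^{\tilde w}_{\delta',j}$ forces boundedness of $\tilde\varphi^{\tilde w}_{\delta'',j}$, and hence of $\varphi^w_{\delta,j}$.

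For (i), the hypothesis $w\sim \tilde w$ gives $W\sim\tilde W$ via Karamata's integration theorem. I will then prove by induction on $j$ that $W(\Phi^w_{\eta,j}(x))\sim \tilde W(\tilde\Phi^{\tilde w}_{\eta,j}(x))$ as long as these diverge; since both $W$ and $\tilde W$ are homeomorphisms of $[0,\infty)$, this equivalence at level $j$ forces $\Phi^w_{\eta,j}$ and $\tilde\Phi^{\tilde w}_{\eta,j}$ to be simultaneously bounded or unbounded, giving $j_\eta(w)=j_\eta(\tilde w)$ and hence $i_{\eta,\pm}(w)=i_{\eta,\pm}(\tilde w)$. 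The base case $j=2$ follows from $W(\Phi^w_{\eta,2}(x))=\eta W(x/\eta)\sim \eta\tilde W(x/\eta)=\tilde W(\tilde\Phi^{\tilde w}_{\eta,2}(x))$ by slow variation. The inductive step uses the integral representation \eqref{defH} combined with $1/w\sim 1/\tilde w$, and works at the level of $W\circ\Phi$ rather than $\Phi$ itself, bypassing the subtlety that slow variation does not preserve $\sim$ through $W^{-1}$.
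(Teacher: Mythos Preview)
Your plan for (ii) is essentially sound and parallels the paper's argument, though you route it through the conjugated iterates $\varphi_{\eta,j}=W\circ\Phi_{\eta,j}\circ W^{-1}$ (hence through $j_\eta$) rather than directly through the $G$-iterates $g_{\eta,n}=G^{(n-1)}(\eta\,\mathrm{Id})$ (hence through $i_\eta$). The paper's version is somewhat cleaner: writing $\psi=w\circ W^{-1}$, the recursion $g_{\eta,n+1}(x)=\int_0^x \psi(g_{\eta,n}(u))/\psi(u)\,du$ together with $g_{\eta,n}(u)\le u$ lets one apply the monotonicity of $\psi/\tilde\psi$ at the pair $(g_{\eta,n}(u),u)$ and then the induction hypothesis, yielding $g_{\eta,n+1}\le \tilde g_{\eta',n+1}+C$; the constant is then absorbed by shifting $\eta'\to\eta''$. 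Your $\varphi$-route requires inverting the iterates and carrying a shift through the inverse, which is workable (the key fact $\tilde\varphi_{\delta',j}-\tilde\varphi_{\delta'',j}\to\infty$ does follow from Lemma~\ref{lemmtech}(ii) and $W(y)-W(\lambda y)\sim -\log\lambda\cdot\ell(y)\to\infty$), but it is more laborious for no gain.

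Your plan for (i), however, has a genuine gap. The inductive hypothesis $W(\Phi^w_{\eta,j}(x))\sim \tilde W(\tilde\Phi^{\tilde w}_{\eta,j}(x))$ is too weak to propagate. The next iterate is
\[
W(\Phi^w_{\eta,j+1}(x))=\int_0^x \frac{du}{w\big((\Phi^w_{\eta,j})^{-1}(u)\big)},
\]
and to compare this with the $\tilde w$-version you need control on $(\Phi^w_{\eta,j})^{-1}$ versus $(\tilde\Phi^{\tilde w}_{\eta,j})^{-1}$, not merely on $W\circ\Phi^w_{\eta,j}$ versus $\tilde W\circ\tilde\Phi^{\tilde w}_{\eta,j}$. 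Since $W$ is slowly varying, $W(a_x)\sim W(b_x)$ imposes essentially no constraint on $a_x/b_x$ (this is exactly the contrapositive of Lemma~\ref{limborne}\eqref{limborne_eq1}); in particular you cannot deduce $w((\Phi^w_{\eta,j})^{-1}(u))\sim \tilde w((\tilde\Phi^{\tilde w}_{\eta,j})^{-1}(u))$. Concretely, $w\sim\tilde w$ gives $W\sim\tilde W$ but \emph{not} $W^{-1}\sim\tilde W^{-1}$, so already comparing $(\Phi^w_{\eta,2})^{-1}$ with $(\tilde\Phi^{\tilde w}_{\eta,2})^{-1}$ fails. The paper circumvents this by proving the stronger pointwise inequality $\Phi^w_{\eta,j}(x)\le \tilde\Phi^{\tilde w}_{\beta,j}(x)$ for every $\beta>\eta$ and large $x$: the hypothesis $w\sim\tilde w$ introduces multiplicative errors $(1\pm\varepsilon)$ at each step, and Lemma~\ref{lemmtech}(ii) (namely $\tilde\Phi_{\beta,j}(\lambda x)=o(\tilde\Phi_{\beta',j}(x))$ for $\beta<\beta'$) is precisely what absorbs them by passing to a slightly larger parameter $\beta'$. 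Symmetry then gives equality of $j_{\eta,\pm}$. You should replace your $\sim$-induction by this inequality-with-parameter-shift scheme.
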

\begin{proof} Let us first establish (i). We prove by induction on $j$ that, for all $\beta\in (\eta,1)$ and $x$ large enough,
\begin{equation}\label{eqhr}
\Phi_{\eta,j}(x)\le \tilde{\Phi}_{\beta,j}(x)\qquad \mbox{for any
$j< j_{\eta,+}(\tilde{w})$}.
\end{equation}
The assumption that $w(x)\sim \tilde w(x)$ implies that, for all
$\varepsilon>0$ and for $x$ large enough,
\begin{equation*}
\frac{1-\varepsilon}{\tilde w(x)} \le \frac{1}{ w(x)}\le
\frac{1+\varepsilon}{\tilde w(x)} \quad\hbox{and}\quad W^{-1}(x)\le
\tilde{W}^{-1}((1+\varepsilon)x).
\end{equation*}
Assume now that \eqref{eqhr} holds for some
$j<j_{\eta,+}(\tilde{w})-1$ and all $\beta>\eta$. Then, for $x$
large enough
$$\frac{1}{w(\Phi^{-1}_{\eta,j}(x))}\le \frac{1+\varepsilon}{\tilde{w}(\tilde{\Phi}^{-1}_{\beta,j}(x))},$$
which yields, for $x$ large enough,
$$\Phi_{\eta,j+1}(x)=W^{-1}\left(\int_{0}^x \frac{dt}{w(\Phi^{-1}_{\eta,j}(t))}\right)\le
\tilde{W}^{-1}\left((1+\varepsilon)^2\int_{0}^x
\frac{dt}{\tilde{w}(\tilde{\Phi}^{-1}_{\beta,j}(t))}  + C\right),$$
for some constant $C >0$. On the other hand, thanks to Lemma
\ref{lemmtech}, setting $\beta':=(1+\varepsilon)^3\beta$, we have,
$$\tilde{\Phi}^{-1}_{\beta,j}(x)\ge(1+\varepsilon)^3 \tilde{\Phi}^{-1}_{\beta',j}(x).$$
The regular variation of $\tilde{w}$ now implies,
$$(1+\varepsilon)^2\int_{0}^x \frac{dt}{\tilde{w}(\tilde{\Phi}^{-1}_{\beta,j}(t))} + C  \le \int_{0}^x \frac{dt}{\tilde{w}(\tilde{\Phi}^{-1}_{\beta',j}(t))}$$
(where we used the divergence at infinity of the integral on the r.h.s.)
and therefore, for $x$ large enough,
$$\Phi_{\eta,j+1}(x)\le \tilde{\Phi}_{\beta',j+1}(x).$$
This proves \eqref{eqhr} by taking $\varepsilon$ small enough.
Applying $\eqref{eqhr}$ with $j=j_{\eta,+}(\tilde{w})-1$ and
$\beta>\eta$ such that $j_{\eta,+}(\tilde{w})=j_{\beta}(\tilde{w})$,
we get, with similar arguments as before,
$$\Phi_{\eta,j_{\eta,+}(\tilde{w})}(x)\le \tilde{W}^{-1}
\left((1+\varepsilon)^2\int_{0}^\infty
\frac{dt}{\tilde{w}(\tilde{\Phi}^{-1}_{\beta,j_{\eta,+}(\tilde{w})-1}(t))}+C\right)<\infty,$$
which implies $j_{\eta}(w)\le j_{\eta,+}(\tilde{w})$ and therefore
$j_{\eta,+}(w)\le j_{\eta,+}(\tilde{w})$. By symmetry, it follows
that $j_{\eta,+}(w) = j_{\eta,+}(\tilde{w})$. The same result also
holds for $j_{\eta,-}$ using similar arguments. This completes the
proof of (i).

We now prove (ii). To this end, we show by induction on $n$ that,
for any $\eta<\eta'$, $n< i_{\eta'}(\tilde w)$ and $x$ large
enough:
\begin{equation}\label{hypG} G^{(n-1)}(\Phi_{\eta,1})(x)\le
\tilde{G}^{(n-1)}(\Phi_{\eta',1})(x),
\end{equation}
which, in view of \eqref{defiw} will imply $i_\eta(w) \le
i_{\eta'}(\tilde w)$ and therefore $i_{\eta,\pm}(w) \le
i_{\eta,\pm}(\tilde w)$. It is easy to check that
\begin{eqnarray*}
G^{(n-1)}(\Phi_{\eta,1})(x)&\le& x \\
\tilde{G}^{(n-1)}(\Phi_{\eta,1})(x)&=&o(\tilde{G}^{(n-1)}(\Phi_{\eta',1})(x))\quad
 \mbox{ for $\eta<\eta'$ and $n<i_{\eta'}(w)$}.
\end{eqnarray*}
Thus, assuming that \eqref{hypG} holds for some $n <
i_{\eta'}(\tilde w)-1$, we find that, for $x$ large,
\begin{equation*}
\frac{w\circ W^{-1}(G^{(n-1)}(\Phi_{\eta,1})(x))}{w\circ
W^{-1}(x)}\le \frac{\tilde w\circ \tilde
W^{-1}(G^{(n-1)}(\Phi_{\eta,1})(x))}{\tilde w\circ \tilde
W^{-1}(x)}\le \frac{\tilde w\circ \tilde
W^{-1}(\tilde{G}^{(n-1)}(\Phi_{\eta',1})(x))}{\tilde w\circ \tilde
W^{-1}(x)}.
\end{equation*}
By integrating, we get, for any $\eta''>\eta'$,
\begin{equation*}
 G^{(n)}(\Phi_{\eta,1})(x)\ \le \  \tilde
 G^{(n)}(\Phi_{\eta',1})(x)+C \ \le \
 \tilde{G}^{(n)}(\Phi_{\eta'',1})(x),
\end{equation*}
which shows that \eqref{hypG} holds for $n+1$, as wanted. \vspace*{0.3cm}
\end{proof}
We now have all the tools needed for proving Proposition
\ref{calculi} which provides examples of weight sequences $w$ with
arbitrarily large critical indexes.
\begin{proof}[Proof of Proposition \ref{calculi}]
Fix  $\alpha\in (0,1)$ and consider a weight function $w$ such that
\begin{equation}\label{defwalpha}
w(x):=x \exp(-(\log x)^\alpha) \qquad \mbox{  for $x\ge e$.}
\end{equation}
An integration by part yields, for any $\gamma\in(0,1)$ and $x$ large enough
$$\gamma V(x)\le W(x)\le V(x)\quad
\mbox{ where }\quad
V(x):=\frac{1}{\alpha} (\log
x)^{1-\alpha} \exp((\log x)^\alpha). $$
Set $\beta:=1/\alpha$ and define  for $\delta>0$,
$$U_\delta(x)=\exp\left((\log x -(\beta-1)\log \log x +\log{ \alpha\delta} )^{\beta}\right).$$
It is easily checked that, for $x$ large enough, $V\circ U_1(x)\le x$ and $V\circ U_\delta(x)\sim \delta x$. This implies that, for $x$ large enough,
$$U_1(x)\le W^{-1}(x)\le U_2(x).$$
 Let $\eta\in(0,1)$ and define the sequence
of functions $(g_{\eta,k})_{k\ge 1}$ by
$$g_{\eta,k} :=G^{(k-1)}(\eta \hbox{Id}),$$
where $G$ is the operator defined by \eqref{opG}. We prove by induction that, if
$k\ge 1$ is such that $(k-1)(\beta-1)<1$, then there exist two
positive constants $c_1$ and $c_2$ (depending on $k$ and $\eta$),
such that, for $x$ large enough,
\begin{equation}\label{encadrementhn}
x\exp(-c_1(\log x)^{(k-1)(\beta-1)})\le g_{\eta,k}(x) \le
x\exp(-c_2(\log x)^{(k-1)(\beta-1)}),
\end{equation}
and that if $(k-1)(\beta-1)>1$, then $g_{\eta,k}$ is bounded. This
result holds for $k=1$. Assume now that \eqref{encadrementhn} holds
for some $k$ such that $(k-1)(\beta-1)<1$. We have, for $x$ large,
\begin{eqnarray*}
\log \left(\frac{ w\circ  W^{-1}\circ g_{\eta,k}(x)}{
w\circ  W^{-1}(x)}\right) &\le & \log \left(\frac{ w\circ  U_2\circ g_{\eta,k}(x)}{
w\circ  U_1(x)}\right)\\
&=&
\log\left(2\left(\frac{\log g_{\eta,k}(x)}{\log x}\right)^{\beta-1}
\frac{x}{g_{\eta,k}(x)}\right)+\log(U_2\circ g_{\eta,k}(x))-\log U_1(x)\\
&\le& c_1(\log x)^{(k-1)(\beta-1)}+(\log x-c_2(\log x)^{(k-1)(\beta-1)})^{\beta}-(\log x-\beta\log \log x)^\beta\\
&\le & \frac{-\beta c_2}{2}(\log
x)^{(k-1)(\beta-1)-1+\beta}:=-c_2'(\log x)^{\gamma},
\end{eqnarray*}
with $\gamma:=k(\beta-1)$. On the one hand, if $\gamma>1$, then
$g_{\eta,k+1}$ is bounded. On the other hand, if $\gamma<1$, an
integration by part yields
$$\int_0^x \exp(-c_2'(\log u)^{\gamma})du\sim x\exp(-c_2'(\log x)^\gamma),$$
giving the desired upper bound for $g_{\eta,k+1}$  (if $\gamma=1$, we easily check that either $g_{\eta,k+1}$ or $g_{\eta,k+2}$ is bounded). The lower bound
is obtained by similar arguments. In particular, we have proved that if
$1/(\beta-1)$ is not an integer, then for any $\eta \in (0,1)$, we
have
\begin{eqnarray*}
i_\eta(w)&=&\inf\{k\ge 2\; : \;  g_{\eta,k} \hbox{ is bounded}\}\\
 &=&\inf\{k\ge 2\; : \;  (k-1)(\beta-1)>1\},
\end{eqnarray*}
which implies  Proposition \ref{calculi}.
\end{proof}
\begin{rem}\label{ipmdifferent}
Using similar arguments as the ones developed above, one can
construct examples of weight functions $w$ with $i_-(w)\neq i_+(w)$.
For instance, choosing $w(k)\sim k\exp(-\sqrt{2\log 2\log k})$, it
is not difficult to check that $i_-(w)=2$ whereas $i_+(w)=3$.
\end{rem}
We conclude this section by providing  an example of a weight sequence
whose indexes $i_\pm(w)$ are infinite.
\begin{cor}\label{ipminfini}
Let $\tilde{w}$ be a weight function  such that
$\tilde{w}(x):=x\exp(-\frac{\log x}{\log \log x})$ for $x$ large
enough. Then $i_\pm(\tilde{w}) = +\infty$.
 \end{cor}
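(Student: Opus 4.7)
My plan is to derive $i_\pm(\tilde w) = +\infty$ by comparison with the family $w_\alpha(x) := x \exp(-(\log x)^\alpha)$, $\alpha \in (0,1)$, of weight functions already analyzed in Proposition~\ref{calculi}. The underlying idea is that the slowly varying function $\tilde\ell(x) = \exp(\log x/\log\log x)$ dominates $\ell_\alpha(x) = \exp((\log x)^\alpha)$ for every $\alpha<1$ (since $(\log x)^\alpha = o(\log x/\log\log x)$), so $\tilde w$ is strictly more sub-linear than each $w_\alpha$. If slower reinforcement yields larger critical indexes, then letting $\alpha \uparrow 1$ and invoking Proposition~\ref{calculi} will force $i_\pm(\tilde w) = +\infty$.

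The comparison is made precise by Proposition~\ref{wequiv}(ii), applied with $w := w_\alpha$ and $\tilde w$ as in the statement: it suffices to show that, for every $\alpha \in (0,1)$, the ratio $R_\alpha(x) := (w_\alpha \circ W_\alpha^{-1}(x))/(\tilde w \circ \tilde W^{-1}(x))$ is eventually non-decreasing. Since $(w \circ W^{-1})'(x) = w'(W^{-1}(x)) \cdot w(W^{-1}(x))$, a direct computation of the logarithmic derivative reduces monotonicity of $R_\alpha$ to the pointwise inequality
$$
w_\alpha'(W_\alpha^{-1}(x)) \;\geq\; \tilde w'(\tilde W^{-1}(x)) \qquad \text{for all $x$ large enough.}
$$
To estimate each side, I would use the same Laplace-type integration-by-parts exploited in the proof of Proposition~\ref{calculi}: it yields $W_\alpha(y) \sim \ell_\alpha(y)(\log y)^{1-\alpha}/\alpha$ and, by an analogous computation with $G(s) = \exp(s/\log s)$ (noting $G(s) \sim G'(s)\log s$), $\tilde W(y) \sim \tilde\ell(y)\log\log y$. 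Inverting these asymptotics and using $w_\alpha'(y) \sim 1/\ell_\alpha(y)$, $\tilde w'(y) \sim 1/\tilde\ell(y)$, I obtain
$$
w_\alpha'(W_\alpha^{-1}(x)) \;\sim\; \frac{(\log x)^{(1-\alpha)/\alpha}}{\alpha\, x} \qquad \text{and} \qquad \tilde w'(\tilde W^{-1}(x)) \;\sim\; \frac{\log\log x}{x}.
$$
Since $(1-\alpha)/\alpha > 0$ for every $\alpha \in (0,1)$, the power $(\log x)^{(1-\alpha)/\alpha}$ dominates $\log\log x$, which gives the required inequality and hence the monotonicity of $R_\alpha$.

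With this in hand, Proposition~\ref{wequiv}(ii) gives $i_\pm(w_\alpha) \leq i_\pm(\tilde w)$ for every $\alpha \in (0,1)$. By Proposition~\ref{calculi}, $i_\pm(w_\alpha) = n+1$ whenever $\alpha \in ((n-1)/n, n/(n+1))$, so $i_\pm(w_\alpha) \to +\infty$ as $\alpha \uparrow 1$, which forces $i_\pm(\tilde w) = +\infty$ as desired. The main technical obstacle is performing the asymptotic inversion of $W_\alpha$ and $\tilde W$ carefully enough to extract the dominant polylogarithmic factor on each side; but since only the sign of the difference matters, the rough estimates already developed in the proof of Proposition~\ref{calculi} transfer with only minor adaptation.
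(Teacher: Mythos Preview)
Your proposal is correct and follows essentially the same route as the paper's proof: both invoke Proposition~\ref{wequiv}(ii) with $w=w_\alpha$ and reduce the monotonicity of $(w_\alpha\circ W_\alpha^{-1})/(\tilde w\circ\tilde W^{-1})$ to the pointwise inequality $w_\alpha'(W_\alpha^{-1}(x))\ge \tilde w'(\tilde W^{-1}(x))$ (the paper writes it as $\tilde w'(x)\le w_\alpha'\!\circ W_\alpha^{-1}\!\circ\tilde W(x)$, which is the same after substituting $x\mapsto\tilde W^{-1}(x)$). The only cosmetic difference is that the paper uses the explicit envelope $U_1\le W_\alpha^{-1}\le U_2$ from the proof of Proposition~\ref{calculi} instead of your asymptotic inversion, but both arrive at the comparison $(\log x)^{(1-\alpha)/\alpha}$ versus $\log\log x$ and conclude via Proposition~\ref{calculi} by letting $\alpha\uparrow 1$.
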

\begin{proof} In view of Propositions \ref{calculi} and \ref{wequiv}, we just need to
  show that, for any $\alpha \in (0,1)$, the function $F:=(w\circ W^{-1})/(\tilde{w}\circ \tilde{W}^{-1})$ is eventually
  non-decreasing, where $w$ is defined by
  \eqref{defwalpha}.
Computing the derivative of $F$, we see that this property holds as
soon as
$$\tilde{w}'(x)\le w'\circ W^{-1}\circ \tilde{W}(x)\quad \mbox{ for $x$ large enough}.$$
Using  $W^{-1}(x)\le U_2(x)$  and  $w'$  non-increasing,
we get
$$w'\circ W^{-1}\circ \tilde{W}(x)\ge w'\circ U_2\circ \tilde{W}(x)\ge \frac{\beta(\log \tilde{W}(x))^{\beta-1}}{4\tilde{W}(x)}\qquad \mbox{ with $\beta:=1/\alpha$}.$$
Moreover, integrating by part, we get
$$ \tilde{W}(x)\sim \exp\left(\frac{\log x}{\log \log x}\right) \log \log x.$$
It follows that $$\tilde{w}'(x)\sim \exp\left(-\frac{\log x}{\log
\log x}\right) \sim \frac{\log \log x}{\tilde{W}(x)}\le w'\circ
W^{-1}\circ \tilde{W}(x),$$ which concludes the proof of the
corollary.
\end{proof}

\section{Coupling of three walks on the
half-line}\label{seccouplage}

In the rest of the paper, we assume that the weight function $w$
satisfies Assumption \ref{assumw} (i) and (ii) so we can use all the
results of the previous section. In order to study the VRRW $X$ on
$\Z$, we first look at the reflected VRRW $\bar{X}$ on the positive
half-line $\lin -1,\infty \irin$. The main idea is to compare this
walk with two simpler self-interacting processes $\tilde{X}$ and
$\widehat{X}$, which, in a way, "surround" the process we are
interested in. The study of $\tilde{X}$ and $\widehat{X}$ is
undertaken in Sections \ref{sectiontilde} and \ref{sectionhat}. The
estimates obtained concerning these two walks are then used in
Section \ref{sectionbar} to study the reflected VRRW $\bar{X}$.

\subsection{A general coupling result}

During the proof of Theorem \ref{locps}, we shall need to consider
processes whose transition probabilities depend, not only on the
adjacent site local time but also on its adjacent edge local time.
Furthermore, it will also be convenient to define processes starting
from arbitrary initial configurations of their edge/site local
times. To make this rigorous, we define the notion of \emph{state}.

\begin{defin}\label{defC}
We call \emph{state} any sequence $\kC=(z(x),n(x,x+1))_{x\in \Z}$ of
non-negative integers such that
$$n(x,x+1)\le z(x+1) \quad\hbox{ for
all $x\in \Z$.}$$
Given $\kC$ and some nearest neighbour path
$X=(X_n,n\ge 0)$ on $\Z$, we define its state
$\kC_n:=(Z_n(x),N_n(x,x+1))_{x\in \Z}$  at time $n$ by
\begin{equation}\label{defZandN}
Z_n(x):=z(x) +\sum_{i=0}^n \mathbf{1}_{\{X_i=x\}}
\quad\mbox{and}\quad N_n(x,x+1):=n(x,x+1)+\sum_{i=0}^{n-1}
\mathbf{1}_{\{X_{i}=x \mbox{ and } X_{i+1}=x+1\}},
\end{equation}
and we say that $\kC$ is the initial state of $X$. Thus $Z_n(x)$ is
the local time of $X$ at site $x$ and time $n$ whereas $N_n(x,x+1)$
corresponds to the local time on the oriented edge $(x,x+1)$ when we
start from $\kC$ (notice that
$\kC_0 \neq \kC$ since the site local time differs at $X_0$). We say
that $\kC$ is \emph{trivial} (resp. \emph{finite})
when all (resp. all but a finite number of)  the local times are $0$. Finally, we say that the state
$\mathcal{C}=(z(x),n(x,x+1))_{x\in \Z}$ is \emph{reachable} if
\begin{eqnarray*}
&(1)& \{x\in\Z\; : \; n(x,x+1)>0\} = \lin a,b-1\rin \hbox{ for some $a\le 0 \le b$,}\\
&(2)& z(x)= n(x,x+1)+n(x-1,x) \hbox{ for all $x\in\Z$.}
\end{eqnarray*}
\end{defin}
The terminology \textit{reachable} is justified by the following
elementary result, whose proof is left to the reader:
\begin{lem} A state $\mathcal{C}$ is reachable i.f.f. it can be
created from the trivial initial state by a finite path starting and
ending at zero (not counting the last visit at the origin for the
local time at site $0$).
\end{lem}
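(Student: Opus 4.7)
The plan is to prove the equivalence in two directions, treating the second as an Euler-circuit construction.

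For the forward direction, let $X_0,\ldots,X_T$ be a nearest-neighbor path with $X_0=X_T=0$, and let $\mathcal{C}=(z(x),n(x,x+1))_{x\in\Z}$ denote the state it creates. The range of the path is an interval $\lin a,b\rin$ with $a\le 0\le b$, so the set of edges with $n(x,x+1)>0$ is exactly $\lin a,b-1\rin$, giving condition $(1)$. The key observation for condition $(2)$ is that for a closed path starting and ending on the same side of any cut $\{x,x+1\}$, the numbers of left-to-right and right-to-left crossings coincide. Thus the number of traversals of the directed edge $(x+1,x)$, although not recorded in the state, equals $n(x,x+1)$. For $x\ne 0$, the site local time equals the number of arrivals at $x$, namely $n(x-1,x)+n(x,x+1)$. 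For $x=0$, discounting the last visit exactly cancels the initial visit at $X_0=0$, so $z(0)$ also equals the number of arrivals at $0$, which is $n(-1,0)+n(0,1)$.

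For the converse, given a state $\mathcal{C}$ satisfying $(1)$ and $(2)$, the strategy is to build an auxiliary directed multigraph $G$ on vertex set $\lin a,b\rin$ (from condition $(1)$), placing $n(x,x+1)$ arcs from $x$ to $x+1$ together with $n(x,x+1)$ arcs from $x+1$ to $x$ for every $a\le x\le b-1$. At each vertex $x$ the in-degree and the out-degree are both equal to $n(x-1,x)+n(x,x+1)$, which by condition $(2)$ is exactly $z(x)$. Connectivity of $G$ follows directly from condition $(1)$, since every consecutive pair of vertices in $\lin a,b\rin$ is joined by at least one edge. The classical Euler circuit theorem for directed multigraphs then provides a closed walk in $G$ starting and ending at $0$ that uses every arc exactly once. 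Reading off this walk as a nearest-neighbor path on $\Z$ yields the required finite path from $0$ to $0$.

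It remains to verify that the path just constructed really produces the prescribed state $\mathcal{C}$. By construction, each directed edge $(x,x+1)$ is used $n(x,x+1)$ times, matching the edge local times. For $x\ne 0$, the number of visits to $x$ equals the in-degree of $x$ in $G$, i.e.\ $n(x-1,x)+n(x,x+1)=z(x)$. For $x=0$, counting visits during $[0,T-1]$ (i.e.\ omitting the last visit) gives $1+(\text{arrivals at }0)-1=n(-1,0)+n(0,1)=z(0)$, precisely the convention in the lemma.

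The argument is essentially combinatorial with no real analytic obstacle; the only mild subtlety is the bookkeeping at the origin, which is resolved by the explicit "not counting the last visit" convention. The use of Euler circuits is the natural tool since conditions $(1)$ and $(2)$ are exactly the standard connectedness and degree-balance requirements, phrased in the minimalist form allowed by the fact that a closed nearest-neighbor walk on $\Z$ balances the two directions of every edge automatically.
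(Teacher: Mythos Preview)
Your argument is correct. The paper itself omits the proof entirely (``proof is left to the reader''), so there is no authors' proof to compare against; your Euler-circuit construction is precisely the elementary argument the authors presumably had in mind, and your bookkeeping at the origin is handled correctly.
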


In order to compare walks with different transition mechanisms it is
convenient to construct them on the same probability space. To do
so, we always use the same generic construction which we now describe. Consider a sequence
$(U_i^x,x \in \Z,i\ge 1)$ of i.i.d. uniform random variables on
$[0,1]$ defined on some probability space
$(\Omega,\mathcal{F},\pp)$. Let $\kC$ be some fixed initial state.
Let $\mathbb{Q}$ be a probability measure on infinite nearest
neighbour paths on $\Z$ starting from $0$ (which may depend on $\kC$)
and write $\mathbb{Q}(x_0,\ldots,x_n)$ for the probability that a path
starts with $x_0,\ldots,x_n$. We construct on
$(\Omega,\mathcal{F},\pp)$ a random walk $X$ with image law
$\mathbb{Q}$ by induction in the following way:
\begin{itemize}
\item Set $X_0=0$.
\item $X_0,\ldots,X_{n}$ being constructed, if $Z_n(X_n)=i$, set
$$
X_{n+1}= \left\{
\begin{array}{ll}
X_n-1&\hbox{if $U_i^{X_n} \le
\mathbb{Q}(X_0,\ldots,X_n,X_n-1\;|\;X_0,\ldots,X_n)$,}\\
X_n+1&\hbox{otherwise,}
\end{array}
\right.
$$
where $Z_n$ stands for the local time of $X$ with initial state $\kC$ as in
Definition \ref{defC}.
\end{itemize}
This construction depends of the choice of $\kC =
(z(x),n(x,x+1))_{x\in\Z}$. In particular, if $z(x)>0$ for some
$x\in\Z$, then the random variables $U_1^{x},\ldots,U^x_{z(x)}$ are
not used in the construction.

 In the rest of the paper, all the walks considered are
constructed from the same sequence $(U_i^x)$ and with the same
initial state $\kC$. Hence, with a slight abuse of notation, we will
write $\pp_{\kC}$ to indicate that the walks are constructed using
the initial state $\kC$. Furthermore, if $\kC$ is the trivial state, we simply use the notation
$\pp_0$. Finally, since all the walks considered in the paper start
from $0$, we do not indicate the starting point in the notation for
the probability measure.

\medskip

Given a walk $X$, we denote its natural filtration by $\mathcal{F}_n :=
\sigma(X_0,\ldots,X_n)$. For $i,j,n\ge 0$ and $x\in \Z$, we define
the sets
\begin{eqnarray}\label{defAB}
\nonumber\mathcal{A}_{i,j}(n,x)&:=&\{X_n=x,\ Z_n(x-1)\ge i,\
Z_n(x+1)\le j \}\\
\mathcal{B}_{i,j}(n,x)&:=&\{X_n=x,\ Z_n(x-1)\le i,\ Z_n(x+1)\ge j
\}.
\end{eqnarray}
We also consider the stopping time
\begin{equation*}\label{defsigma}
\sigma(x,k):=\inf\{n\ge 0\; : \; Z_n(x)=k\}.
\end{equation*}
The following technical, yet very natural result, which is mainly equivalent to Lemma 4.1 of \cite{T1} enables us to compare walks with different transition probabilities.
\begin{lem}\label{propEERW} Let $\kC$ be some initial state and let $X,X'$ be two
nearest neighbours random walks (with possibly distinct mechanisms which may depend on
$\kC$) constructed on $(\Omega,\mathcal{F},\pp_{\kC})$. Assume that
the laws of $X$ and $X'$ are such that, for all $i,j,n,m\ge 0$ and
all $x\in \Z$, we have, $\pp_{\kC}$-a.s.
\begin{equation}\label{eqcouplage}
\pp_{\kC}\{X_{n+1}=x+1  \mid  \mathcal{F}_n,\
\mathcal{A}_{i,j}(n,x)\}\ \le\ \pp_{\kC}\{X'_{m+1}=x+1 \mid
\mathcal{F}'_m,\ \mathcal{B}'_{i,j}(m,x)\}
\end{equation}
(with the obvious $'$ notation for quantities related to $X'$).
Then, for all $x\in \Z$ and all $k\ge 0$  such that the stopping
times $\sigma(x,k)$ and $\sigma'(x,k)$ are both finite, we have
\begin{equation}\label{couple1}
Z_{\sigma(x,k)}(x-1)\geq Z'_{\sigma'(x,k)}(x-1) \quad\textrm{and}
\quad Z_{\sigma(x,k)}(x+1)\leq Z'_{\sigma'(x,k)}(x+1),
\end{equation}
and
\begin{equation}\label{implx_1}
X_{\sigma(x,k)+1}=x+1 \quad\Longrightarrow \quad
X'_{\sigma'(x,k)+1}=x+1.
\end{equation}
In the sequel, when \eqref{couple1} and \eqref{implx_1} hold, we
will say that $X$ is \emph{at the left} of $X'$ and write $X \prec
X'$.
\end{lem}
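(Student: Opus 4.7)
The plan is to establish the coupling by induction on the visit counts $k$, exploiting the fact that both walks are driven by the same family $(U_i^x)_{i\ge 1,\,x\in\Z}$ of uniform random variables. The key structural observation is that when $X$ arrives at site $x$ for the $k$-th time (at time $n=\sigma(x,k)$), its next step is determined by comparing $U_k^x$ to the conditional probability of jumping left; the same holds for $X'$ at time $m=\sigma'(x,k)$. Since both walks consult the \emph{same} $U_k^x$ at their respective $k$-th visits to $x$, it suffices to show that at these matched times the conditional left-jump probability for $X$ dominates the one for $X'$: the direction implication \eqref{implx_1} then follows immediately, and the local time inequalities \eqref{couple1} are obtained as the main bookkeeping step.

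I would formulate the inductive statement as: for every pair $(x,k)$ with both $\sigma(x,k)$ and $\sigma'(x,k)$ finite, \eqref{couple1} holds and, if $X_{\sigma(x,k)+1}=x+1$, then $X'_{\sigma'(x,k)+1}=x+1$. The natural order in which to handle these pairs is by the maximum $\sigma(x,k)\vee\sigma'(x,k)$. At each such pair, assuming the statement for all strictly earlier pairs, one first establishes \eqref{couple1} via an excursion count: any visit to $x\pm 1$ before $\sigma(x,k)$ is entered from an adjacent site via a directed edge, and applying \eqref{implx_1} at all earlier pairs forces the number of right-crossings of $X$ from each site to be no more than that of $X'$ from the same site. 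Combined with the common starting point $X_0=X'_0=0$ and the shared initial state $\kC$, this yields $Z_{\sigma(x,k)}(x-1)\ge Z'_{\sigma'(x,k)}(x-1)$ and $Z_{\sigma(x,k)}(x+1)\le Z'_{\sigma'(x,k)}(x+1)$.

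Once \eqref{couple1} is in place for $(x,k)$, I would apply the hypothesis \eqref{eqcouplage} with the specific choice $i:=Z'_{\sigma'(x,k)}(x-1)$ and $j:=Z_{\sigma(x,k)}(x+1)$. With this choice, $X$ realizes the event $\mathcal{A}_{i,j}(\sigma(x,k),x)$ while $X'$ realizes $\mathcal{B}'_{i,j}(\sigma'(x,k),x)$, so the conditional right-jump probability of $X$ is dominated by that of $X'$. Since both walks take their decision by thresholding the common uniform $U_k^x$, the event $\{X_{\sigma(x,k)+1}=x+1\}$ is contained in $\{X'_{\sigma'(x,k)+1}=x+1\}$, giving \eqref{implx_1} and closing the induction.

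The main obstacle is the excursion-counting step used to propagate the local time inequalities. The two walks are typically at different positions in $\Z$ at any given real time and their $k$-th visits to $x$ occur at distinct instants $\sigma(x,k)\ne\sigma'(x,k)$; what rescues the argument is that the shared uniforms synchronize them \emph{per site and per visit count}, not per real time, so the induction operates cleanly along matched visits rather than along a common clock. Handling the initial state $\kC=(z(x),n(x,x+1))_{x\in\Z}$ requires a bit of care (the contributions of $z(x)$ and $n(x,x+1)$ must be absorbed into the excursion count before either walk has moved), and the visits to $x-1$ and $x+1$ must be treated symmetrically; this is essentially the content of Lemma $4.1$ in \cite{T1}, adapted here to the mixed site/edge reinforcement scheme and to an arbitrary reachable initial configuration.
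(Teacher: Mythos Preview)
Your approach is essentially the same as the paper's: both exploit the shared uniforms $U_k^x$ to deduce \eqref{implx_1} from \eqref{couple1} via \eqref{eqcouplage}, and both propagate \eqref{couple1} by induction along visit pairs. Two small differences are worth noting. First, the paper inducts on $\sigma(x,k)$ alone (the clock of $X$), not on $\sigma(x,k)\vee\sigma'(x,k)$; this is slightly cleaner because the auxiliary pair used in the step, namely $(x_0-1,i)$ with $i=Z_{\sigma(x_0,k_0)-1}(x_0-1)$, automatically has $\sigma(x_0-1,i)<\sigma(x_0,k_0)$, whereas in your ordering one has to check separately that $\sigma'(x_0-1,i)$ is not too large (it works, but it is an extra verification). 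Second, your ``excursion count'' is exactly what the paper makes precise by a short case analysis on whether $X$ and $X'$ arrive at $x_0$ for the $k_0$-th time from the left or the right, then invoking the induction hypothesis at $(x_0,k_0-1)$ and at $(x_0-1,i)$; you have identified this as the main obstacle and it is indeed the only place where care is needed.
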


\begin{proof} In view of \eqref{eqcouplage}, if \eqref{couple1}
holds for some $(x,k)$, then so does \eqref{implx_1}. Hence, it
suffices to prove, by induction on $n\geq 0$, the assertion
\begin{equation}\label{hyprec0}
`` \forall x,k \mbox{ such that } \sigma(x,k)\le n,\;
\eqref{couple1}
 \mbox{ holds}. "
\end{equation}
This assertion is trivial for $n=0$ since both walks start with the
same initial state. Let us now assume that \eqref{hyprec0} holds for
some $n\geq0$. Let $(k_0,x_0)$ be such that $\sigma(x_0,k_0)=n+1$
and assume that $\sigma'(x_0,k_0)=m+1<\infty$. There are two cases.
Either this is the first visit to $x_0$ (i.e. $k_0=Z_0(x_0)+1$),
then $X_{n}=X'_{m}$ since both walks have the same starting point.
Otherwise, we are dealing with a subsequent visit to $x_0$. Applying
the recurrence hypothesis with $(k_0-1,x_0)$, it follows from
\eqref{implx_1} that
$$X_{\sigma(x_0,k_0-1)+1}=x_0+1 \quad\Longrightarrow\quad X'_{\sigma'(x_0,k_0-1)+1}=x_0+1.$$
Thus, in any case, we have
$$X_{n} \le X'_{m}\in \{x_0\pm 1\}.$$
If $X_{n} <X'_{m},$ then \eqref{couple1} clearly holds for
$(x_0,k_0)$ since
$Z'_{\sigma'(x_0,k_0)}(x_0-1)=Z'_{\sigma'(x_0,k_0-1)}(x_0-1)$ and
$Z_{\sigma(x_0,k_0)}(x_0+1)=Z_{\sigma(x_0,k_0-1)}(x_0+1)$. Assume
now that $X_{n}= X'_{m}=x_0-1$ (the case $x_0+1$ being similar).
Clearly, we have $Z'_{\sigma'(x_0,k_0)}(x_0+1)\ge
Z_{\sigma(x_0,k_0)}(x_0+1)$. It remains to  prove the converse
inequality for $x_0 - 1$. Denoting $i:=Z_n(x_0-1)$ and applying
\eqref{couple1} with $(x_0-1,i)$,  we find that, when
$\sigma'(x_0-1,i)<\infty$,
$$k_0-1= Z_{\sigma(x_0-1,i)}(x_0) \le Z'_{\sigma'(x_0-1,i)}(x_0).$$
Hence
$$ \sigma'(x_0,k_0-1)=m \le \sigma'(x_0-1,i).$$
This inequality trivially holds when $\sigma'(x_0-1,i)=\infty$) thus
$$Z'_{\sigma'(x_0,k_0)}(x_0-1)=Z_{m}(x_0-1)\le i = Z_n(x_0-1)=
Z_{\sigma(x_0,k_0)}(x_0-1).$$
This completes the proof of he lemma.
\end{proof}

\begin{cor}\label{corZinfty} Let $X,X'$ be two random walks such that $X \prec X'$.
\begin{itemize}
\item[(i)]
Let
$x_0:=\inf\{x \in \Z\; : \; Z'_\infty(x)=\infty\}$. Then,
$$Z_\infty(x)\le Z'_\infty(x)\qquad \textrm{for all }x\ge x_0.$$
In particular, if $X'$ localizes on a finite subset $\lin a,b \rin$,
then $\limsup X\le b$.
\item[(ii)] On the event $\{\lim_{n\rightarrow \infty} X_n=+\infty\}$, we have
$$Z'_\infty(x)\le Z_\infty(x)\qquad \textrm{for all }x\in \Z.$$
In particular, if $X'$ is recurrent, then  $X$ cannot diverge to
$+\infty$.
\end{itemize}
\end{cor}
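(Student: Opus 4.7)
My plan is to derive both assertions from Lemma \ref{propEERW} --- specifically, from the coupling inequality \eqref{couple1} and the implication \eqref{implx_1} --- together with the elementary fact that the set of sites visited infinitely often by a nearest-neighbour walk on $\Z$ is an interval. Since the inequality in (i) is trivial at sites where $Z'_\infty(x)=\infty$, its substance reduces to the case $R':=\{y:Z'_\infty(y)=\infty\}=\lin x_0,b\rin$ with $b<\infty$ and $x\ge b+1$, which I would prove by induction on $x$ starting from $x=b+1$.

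At the inductive step I would examine the last visit time $\sigma(x-1,m)$ of $X$ to $x-1$ with $m:=Z_\infty(x-1)$, which is finite by induction (for the base case $x=b+1$, one may have $m=\infty$, and the inequality is then obtained by letting $k\to\infty$ in \eqref{couple1} applied to $(b,k)$). The key dichotomy is on the direction of $X$ at time $\sigma(x-1,m)+1$. If $X_{\sigma(x-1,m)+1}=x-2$, the walk stays $\le x-2$ forever after, and \eqref{couple1} immediately gives $Z_\infty(x)=Z_{\sigma(x-1,m)}(x)\le Z'_{\sigma'(x-1,m)}(x)\le Z'_\infty(x)$. If $X_{\sigma(x-1,m)+1}=x$, the walk is confined to $\lin x,\infty\irin$ forever after; arguing by contradiction that $Z_\infty(x)\ge N+1$ with $N:=Z'_\infty(x)$, I would combine \eqref{couple1} at $(x-1,m)$ with \eqref{implx_1} to first rule out $Z_{\sigma(x-1,m)}(x)=N$ (else \eqref{implx_1} would force $X'$ to visit $x$ one more time than $N=Z'_\infty(x)$ allows), and then deduce that $\sigma(x,N)$ falls within the confined phase of $X$. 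In that phase $X_{\sigma(x,N)+1}=x+1$ necessarily, and \eqref{implx_1} forces $X'_{\sigma'(x,N)+1}=x+1$; but $\sigma'(x,N)$ is the last visit of $X'$ to $x$, and since $X'$ is recurrent on $R'\subseteq(-\infty,x-1]$, it must move leftward to $x-1$ instead, yielding the desired contradiction. The ``in particular'' claim then follows by summation: when $X'$ localizes on $\lin a,b\rin$, the quantity $\sum_{x>b}Z'_\infty(x)$ is finite, so the main inequality implies that $X$ visits $\{b+1,b+2,\ldots\}$ only finitely many times, and hence $\limsup X\le b$.

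For part (ii), the ``in particular'' statement is a direct consequence of the main inequality: recurrence of $X'$ would provide some $y$ with $Z'_\infty(y)=\infty>Z_\infty(y)<\infty$, a contradiction. For the main inequality, on $\{\lim X=+\infty\}$ every $M_x:=Z_\infty(x)$ is finite and $X_{\sigma(x,M_x)+1}=x+1$. Assuming for contradiction that $Z'_\infty(x_\ast)>M_{x_\ast}$ for some $x_\ast$, \eqref{implx_1} at $(x_\ast,M_{x_\ast})$ gives $X'_{\sigma'(x_\ast,M_{x_\ast})+1}=x_\ast+1$; since $X'$ must return to $x_\ast$, it does so via $x_\ast+1$ and therefore performs a leftward move from $x_\ast+1$ at some visit-level $k$. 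Provided $k\le M_{x_\ast+1}$, the contrapositive of \eqref{implx_1} at $(x_\ast+1,k)$ forces $X$ to also left-move from $x_\ast+1$ at its $k$-th visit, producing an extra visit of $X$ to $x_\ast$ incompatible with $Z_\infty(x_\ast)=M_{x_\ast}$. If instead $k>M_{x_\ast+1}$, then $Z'_\infty(x_\ast+1)>M_{x_\ast+1}$ and the same argument iterates at $x_\ast+1$, $x_\ast+2$, and so on until the bookkeeping closes.

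The hardest step will be the ``confinement'' case in part (i) (and its analogue in part (ii)): the delicate point is locating the precise visit-level $k$ at which to invoke the coupling so that it simultaneously lies in the confined phase of $X$ and has both $\sigma(\cdot,k)$ and $\sigma'(\cdot,k)$ finite. This is where the implication \eqref{implx_1}, rather than only the local-time inequalities \eqref{couple1}, is essential.
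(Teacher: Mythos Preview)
Your argument for part (i) is correct, though it is organized differently from the paper's. The paper inducts directly from $x_0$ (not from $b+1$) and avoids your confinement dichotomy entirely: setting $k:=Z'_\infty(x)$, it observes that since $X'$ visits $x_0$ infinitely often, necessarily $X'_{\sigma'(x,k)+1}=x-1$, whence by the contrapositive of \eqref{implx_1} also $X_{\sigma(x,k)+1}=x-1$; then every later visit of $X$ to $x-1$ is at a level $j\in\lin Z'_{\sigma'(x,k)}(x-1),\,Z'_\infty(x-1)\rin$ (using \eqref{couple1} and the inductive hypothesis), and at each such level $X'$ has already made its last visit to $x$, so \eqref{implx_1} forces $X$ left again. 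This is shorter and never needs to argue that $X$ is trapped in $\lin x,\infty\irin$. Your route, however, is perfectly valid.

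Part (ii) has a genuine gap. First, a small omission: in your Case~1 you need \eqref{couple1} (not only \eqref{implx_1}) to conclude that the leftward step of $X$ at level $k$ is indeed an \emph{extra} visit to $x_\ast$; namely $Z_{\sigma(x_\ast+1,k)}(x_\ast)\ge Z'_{\sigma'(x_\ast+1,k)}(x_\ast)\ge M_{x_\ast}$, since $\sigma'(x_\ast+1,k)>\sigma'(x_\ast,M_{x_\ast})$. More seriously, the iteration ``$x_\ast,x_\ast+1,\ldots$ until the bookkeeping closes'' has no termination argument. If at every step you fall into your Case~2, you only conclude that $Z'_\infty(y)>M_y$ for all $y\ge x_\ast$, which by itself is not a contradiction; the restarts at $\sigma'(y,M_y)$ do not track a single finite trajectory of $X'$, so there is no reason the chain ever reaches Case~1.

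The fix --- and this is exactly what the paper does --- is to replace the spatial iteration by a \emph{temporal} minimality: let $n$ be the first time at which $Z'_n(y)>Z_\infty(y)$ for some $y$, and set $x_0:=X'_n$. Minimality of $n$ gives $Z'_{n-1}(X'_{n-1})\le Z_\infty(X'_{n-1})$. If $X'_{n-1}=x_0+1$, this is precisely the inequality $k\le M_{x_0+1}$ your Case~1 needs, and \eqref{couple1}--\eqref{implx_1} at $(x_0+1,k)$ yield the contradiction in one step. If $X'_{n-1}=x_0-1$, then at its previous (i.e.\ $M_{x_0}$-th) visit to $x_0$ the walk $X'$ moved left, whereas $X$ (transient to $+\infty$) moves right at its $M_{x_0}$-th visit, contradicting \eqref{implx_1}. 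No iteration is needed.
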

\begin{proof} (i) We prove the result by induction on $x\ge x_0$.
There is nothing to prove for $x = x_0$ since
$Z'_\infty(x_0)=\infty$. Let us now assume that the result holds
for some $x-1\ge x_0$. Letting $k:=Z'_\infty(x)$, we just need to
prove that, on $\{k < \infty\}\cap\{ \sigma(x,k)<\infty\}$, the walk
$X$ never visits site $x$ after time $\sigma(x,k)$. First, since
$x_0$ is visited infinitely often by $X'$, in view of
\eqref{implx_1}, we find that $X_{\sigma(x,k)+1} =
X'_{\sigma'(x,k)+1}=x-1$. Moreover, if $n > \sigma(x,k)$ is such
that $X_n=x-1$ then $n =\sigma(x-1,j)$ for some $j\in \lin
Z_{\sigma(x,k)}(x-1), Z_\infty(x-1)\rin \subset \linf
Z'_{\sigma'(x,k)}(x-1), Z'_\infty(x-1)\rinf$ where we used
\eqref{couple1} and the recurrence hypothesis for the inclusion.
Recalling that $X'$ does not visit site $x$ after time
$\sigma'(x,k)$, we conclude, using  \eqref{implx_1} again, that
$X_{n+1}=X'_{\sigma'(x-1,j)+1}=x-2$. This entails that $X$ never
visits site $x$ after time $\sigma(x,k)$.

\medskip

(ii) By contradiction, assume that
$$n:=\inf\{i\ge 0\; : \; Z'_i(x)>Z_\infty(x) \mbox{ for some $x$}\}<\infty$$
and let $x_0 = X'_n$. Two cases may occur:
\begin{itemize}
\item $X'_{n-1}=x_0-1$. This means that $X'$ jumped from $x_0$ to $x_0-1$ at its previous visits to $x_0$ (\emph{i.e.} its $Z_\infty(x_0)$-th
visit). On the other hand, since $X$ is transient to the right, it
jumps from $x_0$ to $x_0+1$ at its $Z_\infty(x_0)$-th visit to
$x_0$. This contradicts \eqref{implx_1}.
\item $X'_{n-1}=x_0+1$. By definition of $n$ we have $k:=Z'_{n-1}(x_0+1)\le Z_\infty(x_0+1)$
hence $\sigma(x_0+1,k)<\infty$. Using \eqref{couple1} we get
$Z_{\sigma(x_0+1,k)}(x_0)\ge
Z'_{\sigma'(x_0+1,k)}(x_0)=Z_\infty(x_0)$ whereas \eqref{implx_1}
gives $X_{\sigma(x_0+1,k)+1}=X'_{n}=x_0$. This yields
$Z_{\sigma(x_0+1,k)+1}(x_0) > Z_\infty(x_0)$ which is absurd.
\end{itemize}
\end{proof}

\subsection{The three walks $\widetilde{X}$,$\bar{X}$ and $\widehat{X}$}
We define three nearest neighbour random walks on $\lin -1,\infty
\irin$, starting from some initial state $\kC$, which are denoted
respectively by $\widetilde{X}, \bar{X}$ and $\widehat{X}$. All the
quantities referring to $\widetilde{X}$ (resp. $\bar{X}$,
$\widehat{X}$) are denoted with a tilde (resp. bar, hat). The three
walks are reflected at $-1$ \emph{i.e.},
$$\pp_\mathcal{C}\!\{\bar{X}_{n+1}\!=\!0 \mid  \bar{\mathcal{F}}_n, \bar{X}_n\!=\!-\!1\}
=\pp_\mathcal{C}\!\{\widetilde{X}_{n+1}\!=\!0 \mid
\widetilde{\mathcal{F}}_n,\widetilde{X}_n\!=\!-\!1\}
=\pp_\mathcal{C}\!\{\widehat{X}_{n+1}\!=\!0 \mid
\widehat{\mathcal{F}}_n,\widehat{X}_n\!=\!-\!1\}=1$$ and the
transition probabilities are given by the following rules:
\begin{itemize}
\item The walk $\bar{X}$ is a vertex reinforced random walk with weight $w$ reflected at $-1$, \emph{i.e.} for all $x\ge 0$,
\begin{equation}\label{transi_barX}
\pp_\mathcal{C}\{\bar{X}_{n+1}=x-1 \mid \mathcal{\bar{F}}_n,
\bar{X}_n =
x\}=\frac{w(\bar{Z}_n(x-1))}{w(\bar{Z}_n(x-1))+w(\bar{Z}_n(x+1))}.
\end{equation}

\item The walk $\widetilde{X}$  is a "mix" between an oriented edge-reinforced and a vertex-reinforced random walk:
when at site $x$, the walk makes a jump to the left with a
probability proportional to a function of the local time at the site
$x-1$ whereas it jumps to the right with a probability proportional
to a function of the local time on the oriented edge $(x,x+1)$. More
precisely, for $x\geq 0$,
\begin{equation}\label{transi_tildeX}
\pp_\mathcal{C}\{\widetilde{X}_{n+1}=x-1 \mid
\widetilde{\mathcal{F}}_n, \widetilde{X}_n =
x\}=\frac{w(\widetilde{Z}_n(x-1))}{w(\widetilde{Z}_n(x-1))+w(\widetilde{N}_n(x,x+1))}.
\end{equation}

\item The transition mechanism of the third walk $\widehat{X}$ is a
bit more complicated.  Similarly to the previous walk, $\widehat{X}$
jumps to the left with a probability proportional to a function of
the local time at the site on its left whereas it jumps to the right
with a probability proportional to a (different) function of the
local time on the oriented edge on its right. However, we do not
directly use the weight function $w$ because we want to increase the
reinforcement induced by the local time of the right edge. In order
to do so, we fix $\varepsilon>0$ small enough such that
$i_+(w)=i_{1/2+3\varepsilon}(w)$. Next, we consider a function $f :=
f_{1/2+2\varepsilon}$ as in Lemma \ref{WL} (\emph{i.e.} a function
satisfying (a)-(d) of Lemma \ref{WL} with $\eta =
1/2+2\varepsilon$). Given these two parameters, the transition
probabilities of $\widehat{X}$ are defined by
\begin{equation}\label{transi_hatX}
\pp_\mathcal{C}\{\widehat{X}_{n+1}=x-1 \mid \widehat{\mathcal{F}}_n,
\widehat{X}_n = x\} = \left\{
\begin{array}{ll}
\frac{w(\widehat{Z}_n(-1))}{\vphantom{X^{X^X}}w(\widehat{Z}_n(-1))+w\left(\widehat{N}_n(0,1)+f(\widehat{N}_n(0,1))\right)}&\hbox{ if $x = 0$,}\\
\frac{\vphantom{X^{{X^X}^X}}w(\widehat{Z}_n(x-1))}{\vphantom{X^{X^X}}w(\widehat{Z}_n(x-1))+w\left((1+\varepsilon)\widehat{N}_n(x,x+1)\right)}&\hbox{ if $x > 0$.}\\
\end{array}
\right.
\end{equation}
Comparing these transition probabilities with those of
$\widetilde{X}$, the edge local time $N(0,1)$ is slightly increased
by $f(N(0,1)) = o(N(0,1))$ whereas the edge local times $N(x,x+1)$
are multiplied by $1+\varepsilon$ for $x\geq 1$.
\end{itemize}

\begin{rem} \
\begin{enumerate}
\item[\textup{(a)}] Let us emphasize the fact that the laws of the three walks
depend on the initial state $\kC$ since the local times $Z_n(x)$ and
$N_n(x,x+1)$ depend upon it.
\item[\textup{(b)}] We should rigourously write $\widehat{X}^{\varepsilon,f}$
instead of $\widehat{X}$ since the law of the walk depends on the
choice of $(\varepsilon,f)$. However, these two parameters depend,
in turn, only on the weight function $w$ which is fixed throughout
the paper. For the sake of clarity, we keep the notation without any
superscript.
\end{enumerate}
\end{rem}

\subsection{Coupling between $\widetilde{X}$, $\bar{X}$ and $\widehat{X}$}

For any random walk, the local time at site $x$ is equal (up to an
initial constant) to the sum of the local times of the ingoing edges
adjacent to $x$ since the walk always reaches $x$ through one of
these edges. Hence, looking at the definition of $\widetilde{X}$ and
$\bar{X}$, we see that the reinforcements schemes give a stronger
"push to the right" for $\bar{X}$ than for $\widetilde{X}$ so it is
reasonable to expect $\widetilde{X}$ to be at the left of $\bar{X}$.
This is indeed the case:

\begin{lem}\label{proptildeleftbar} For any initial state $\mathcal{C}$,  under $\pp_\mathcal{C}$, we have $\widetilde{X} \prec \bar{X}$.
\end{lem}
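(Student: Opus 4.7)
The plan is to apply Lemma \ref{propEERW} directly, so the whole task reduces to verifying the pointwise inequality \eqref{eqcouplage} between the one-step transition probabilities of $\widetilde{X}$ and $\bar{X}$. Since both walks are reflected at $-1$, the inequality is trivial at $x=-1$ (both sides equal $1$ for a jump to $0$ and $0$ for a jump to $-2$), so only $x\ge 0$ needs attention.

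Fix $i,j,n,m \ge 0$ and $x \ge 0$. On the event $\mathcal{A}_{i,j}(n,x) = \{\widetilde{X}_n = x,\ \widetilde{Z}_n(x-1) \ge i,\ \widetilde{Z}_n(x+1) \le j\}$, the defining inequality $\widetilde{N}_n(x,x+1) \le \widetilde{Z}_n(x+1)$ from Definition \ref{defC} gives $\widetilde{N}_n(x,x+1) \le j$, while $\widetilde{Z}_n(x-1) \ge i$. Since $w$ is non-decreasing and the map $(a,b)\mapsto w(b)/(w(a)+w(b))$ is non-increasing in $a$ and non-decreasing in $b$, the transition rule \eqref{transi_tildeX} yields
\begin{equation*}
\pp_{\kC}\{\widetilde{X}_{n+1}=x+1 \mid \widetilde{\mathcal{F}}_n,\ \mathcal{A}_{i,j}(n,x)\}
= \frac{w(\widetilde{N}_n(x,x+1))}{w(\widetilde{Z}_n(x-1))+w(\widetilde{N}_n(x,x+1))}
\le \frac{w(j)}{w(i)+w(j)}.
\end{equation*}
Symmetrically, on $\mathcal{B}_{i,j}(m,x) = \{\bar{X}_m = x,\ \bar{Z}_m(x-1) \le i,\ \bar{Z}_m(x+1) \ge j\}$, the definition \eqref{transi_barX} gives
\begin{equation*}
\pp_{\kC}\{\bar{X}_{m+1}=x+1 \mid \bar{\mathcal{F}}_m,\ \mathcal{B}_{i,j}(m,x)\}
= \frac{w(\bar{Z}_m(x+1))}{w(\bar{Z}_m(x-1))+w(\bar{Z}_m(x+1))}
\ge \frac{w(j)}{w(i)+w(j)},
\end{equation*}
so that \eqref{eqcouplage} holds.

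An invocation of Lemma \ref{propEERW} then gives $\widetilde{X} \prec \bar{X}$ and completes the proof. There is no real obstacle here: the only point requiring care is to remember that on the event $\mathcal{A}_{i,j}(n,x)$ one does \emph{not} directly control $\widetilde{N}_n(x,x+1)$ by the hypothesis, but only $\widetilde{Z}_n(x+1)$; the structural inequality $N(x,x+1)\le Z(x+1)$ built into the notion of state is exactly what bridges the gap and makes the edge-reinforcement on the right of $\widetilde{X}$ comparable to the site-reinforcement on the right of $\bar{X}$.
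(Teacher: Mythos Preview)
Your proof is correct and follows essentially the same approach as the paper: verify the hypothesis \eqref{eqcouplage} of Lemma \ref{propEERW} by bounding both transition probabilities against the common quantity $w(j)/(w(i)+w(j))$, using the structural inequality $\widetilde{N}_n(x,x+1)\le \widetilde{Z}_n(x+1)$ inherited from the definition of a state. The only cosmetic difference is that the paper phrases the bounds in terms of the probability to jump left rather than right.
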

\begin{proof} We just need to show that \eqref{eqcouplage} holds with $\widetilde{X}$ and
$\bar{X}$.  Define $\widetilde{\mathcal{A}}_{i,j}(n,x)$ and
$\bar{\mathcal{B}}_{i,j}(n,x)$ as in \eqref{defAB}. On the one hand,
for $x\ge 0$, we have
\begin{equation*}
\pp_\mathcal{C}\{\bar{X}_{n+1}=x-1 \ | \
\bar{\mathcal{F}}_n,\bar{\mathcal{B}}_{i,j}(n,x) \} =
\frac{w(\bar{Z}_n(x-1))}{w(\bar{Z}_n(x-1))+w(\bar{Z}_n(x+1))}
\mathbf{1}_{\{\bar{\mathcal{B}}_{i,j}(n,x)\}}  \le
\frac{w(i)}{w(i)+w(j)}.
\end{equation*}
On the other hand, since we have by definition of a state that
$\widetilde{N}_0(x,x+1)\le \widetilde{Z}_0(x+1)$ for all $x$, we
also have $\widetilde{N}_n(x,x+1)\le \widetilde{Z}_n(x+1)$ for any
$x,n$ and thus
\begin{equation*}
\pp_\mathcal{C}\{\widetilde{X}_{n+1}=x-1 \ | \
\widetilde{\mathcal{F}}_n,\widetilde{\mathcal{A}}_{i,j}(n,x) \} =
\frac{w(\widetilde{Z}_n(x-1))}{w(\widetilde{Z}_n(x-1))+w(\widetilde{N}_n(x,x+1))}
\mathbf{1}_{\{\widetilde{\mathcal{A}}_{i,j}(n,x)\}} \geq
\frac{w(i)}{w(i)+w(j)},
\end{equation*}
which proves $\eqref{eqcouplage}$.
\end{proof}

\medskip  Unfortunately, as we cannot \emph{a priori} compare the quantity $(1+\varepsilon)N_n(x+1,x)$
with $Z_n(x)$ nor $N_n(0,1)+f(N_n(0,1))$ with $Z_n(1)$, there is no
direct coupling between $\bar{X}$ and $\widehat{X}$. However, we can
still define a "good event" depending only on $\widehat{X}$ on which
$\bar{X}$ is indeed at the left of $\widehat{X}$ with positive
probability. For $L,M\ge 0$, set
\begin{equation}\label{defEKN}
\widehat{\mathcal{E}}(L,M)= \left\{\exists K\le L,\; \forall n\ge M,\;  \begin{array}{l}  \widehat{Z}_n(1)\le \widehat{N}_n(0,1)+f(\widehat{N}_n(0,1)) \\
 \forall x \in \lin 2, K \rin, \; \widehat{Z}_n(x)\le (1+\varepsilon)\widehat{N}_n(x-1,x) \\
 \forall x \ge K, \; \widehat{Z}_n(x)=\widehat{Z}_{M}(x)\\
\end{array}\right\}.
\end{equation}

\begin{lem}\label{probapositive} Let $\mathcal{C}$ be any initial state.
\begin{enumerate}
\item[\textup{(i)}] Under $\pp_{\kC}$, we have $\bar{X}\prec \widehat{X}$ on
$\widehat{\mathcal{E}}(L,0)$ (meaning that \eqref{couple1} and
\eqref{implx_1} hold on this event) and
\begin{equation}\label{ek0}
\widehat{\mathcal{E}}(L,0) \subset\{\bar{X} \mbox{ never visits site
$L$}\}.
\end{equation}
\item[\textup{(ii)}] Assume that $\pp_\mathcal{C}\{\widehat{\mathcal{E}}(L,M)\}>0$ for some $L,M\geq 0$. Then, under
$\pp_\mathcal{C}$, with positive probability, the walk $\bar{X}$
ultimately stays confined in the interval $\lin -1, L-1 \rin$.
\end{enumerate}
\end{lem}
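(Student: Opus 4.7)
For part (i), the plan is to verify path-wise on the event $\widehat{\mathcal{E}}(L,0)$ the coupling inequality \eqref{eqcouplage} of Lemma \ref{propEERW} between $\bar{X}$ (playing the role of $X$) and $\widehat{X}$ (as $X'$). A direct computation from the transition formulas \eqref{transi_barX} and \eqref{transi_hatX} shows that at $x=0$, the first clause of the event gives $\widehat{N}_n(0,1) + f(\widehat{N}_n(0,1)) \ge \widehat{Z}_n(1)$, so the monotonicity of $w$ forces the right-jump probability of $\widehat{X}$ on $\widehat{\mathcal{B}}_{i,j}(n,0)$ to be at least $w(j)/(w(i)+w(j))$, while that of $\bar{X}$ on $\bar{\mathcal{A}}_{i,j}(n,0)$ is at most $w(j)/(w(i)+w(j))$. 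The analogous inequality for $x \in \lin 1, K-1 \rin$ comes from the second clause $(1+\varepsilon)\widehat{N}_n(x,x+1) \ge \widehat{Z}_n(x+1)$. The third clause prevents $\widehat{X}$ from ever visiting sites $\ge K$ (any such visit would break the freezing of the corresponding site local time), so the coupling inequality is only needed at positions in $\lin 0, K-1 \rin$, where it holds. Replaying path-wise the induction of Lemma \ref{propEERW} on the event yields $\bar{X} \prec \widehat{X}$, and a similarly path-wise reading of Corollary \ref{corZinfty}(i), applied to the fact that $\widehat{X}$ localizes in $\lin -1, K-1 \rin$ on the event, gives $\bar{Z}_\infty(L) \le \widehat{Z}_\infty(L) = z(L)$, whence $\bar{X}$ never visits $L$.

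For part (ii), the plan is to select a single positive-probability trajectory of $\widehat{X}$ up to time $M$ and invoke part (i) at the restart. Since the set of length-$M$ nearest-neighbour paths starting at $0$ is finite and $\pp_\kC\{\widehat{\mathcal{E}}(L,M)\} > 0$, at least one path $\omega = (0, x_1, \ldots, x_M)$ satisfies both $\pp_\kC\{\widehat{X}_{[0,M]} = \omega\} > 0$ and $\pp_\kC\{\widehat{\mathcal{E}}(L,M) \mid \widehat{X}_{[0,M]} = \omega\} > 0$. Writing $\mathcal{C}^\ast$ for the state produced at time $M$ by following $\omega$ from $\kC$, the Markov property for $\widehat{X}$ on the extended state space (position, configuration) shows that, restarted from $(\mathcal{C}^\ast, x_M)$, the walk satisfies the analog of $\widehat{\mathcal{E}}(L,0)$ with positive probability, so a shifted version of part (i) yields positive probability that a copy of $\bar{X}$ started from $(\mathcal{C}^\ast, x_M)$ never visits $L$.

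To transfer this conclusion to the original walk, I would consider the joint event $A := \{\bar{X}_{[0,M]} = \omega\} \cap \{\widehat{X}_{[0,M]} = \omega\}$. At each step along $A$, both walks stand at the same site with identical site and edge local times, hence they read the same uniform $U^{x_n}_i$, and the event ``both jump in the direction prescribed by $\omega$'' reduces to $U^{x_n}_i$ falling in an interval of positive length (since both transition probabilities lie strictly between $0$ and $1$). Iterating over $n$ gives $\pp_\kC(A) > 0$. Conditional on $A$, both walks restart at time $M$ from $(\mathcal{C}^\ast, x_M)$ using fresh independent uniforms $(U^x_i)_{i > Z^\ast_M(x)}$, so the conclusion of the previous paragraph applies and yields positive conditional probability that $\bar{X}$ avoids $L$ forever after time $M$. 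Since $\bar{X}$ is a nearest-neighbour walk, this forces it to remain in $\lin -1, L-1 \rin$ from time $M$ onwards, proving (ii).

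The main obstacle is the path-wise handling in part (i): the event $\widehat{\mathcal{E}}(L,0)$ is random rather than a global hypothesis on the joint law of $(\bar{X}, \widehat{X})$, so Lemma \ref{propEERW} and Corollary \ref{corZinfty}(i) cannot be applied as stated, and their inductive arguments must be replayed with care to ensure that the coupling inequality is invoked only at positions in $\lin -1, K-1 \rin$ where the event actually supplies it. The remaining steps, namely the explicit transition-probability computation and the Markov-restart argument of part (ii), are comparatively standard.
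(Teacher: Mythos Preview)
Your argument for part (i) matches the paper's: both verify the one-step coupling inequality \eqref{eqcouplage} site by site using the three clauses of $\widehat{\mathcal{E}}(L,0)$, and both flag (you more explicitly than the paper) that Lemma~\ref{propEERW} and Corollary~\ref{corZinfty} must be replayed path-wise on the event rather than invoked as black boxes. This is correct, and your discussion of the obstacle is accurate.

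For part (ii) you take a genuinely different route. The paper introduces an auxiliary walk $X^*$ whose right-jump weight at $x$ is $w(V^*_n(x+1))$ with $V^*_n(x+1)=\max\big(Z^*_n(x+1),\,(1+\varepsilon)N^*_n(x,x+1)\big)$ (and the analogous $f$-modification at $x=0$). Because $V^*_n\ge Z^*_n$ always, one gets $\bar X\prec X^*$ \emph{globally}, with no restriction to an event; and on $\widehat{\mathcal{E}}(L,M)$ the max is achieved by the edge term for all $n>M$, so $X^*$ and $\widehat X$ have identical transitions after time $M$ whenever they coincide up to time $M$. Ellipticity then gives $\pp_\kC\{\widehat{\mathcal{E}}(L,M)\cap\{X^*_n=\widehat X_n\ \forall n\le M\}\}>0$, and Corollary~\ref{corZinfty} applied to the global relation $\bar X\prec X^*$ finishes. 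Your approach instead fixes a single length-$M$ path $\omega$ with $\pp_\kC\{\widehat{\mathcal{E}}(L,M)\mid\widehat X_{[0,M]}=\omega\}>0$, forces both $\bar X$ and $\widehat X$ to follow $\omega$ (which has positive probability since at each step the two left-jump thresholds are both in $(0,1)$), and then invokes part (i) for the pair restarted at time $M$ from the common state $\mathcal C^\ast$ at position $x_M$, using that the event $A$ depends only on the finitely many uniforms $U^x_i$ with $i\le Z^\ast_M(x)$. This is correct: the coupling lemma's induction and Corollary~\ref{corZinfty} use nothing about the starting position being $0$, so part (i) indeed transfers to the restart. The trade-off is that the paper's $X^*$ trick cleanly sidesteps the path-wise replay of Lemma~\ref{propEERW} (the very obstacle you identify), at the cost of defining one more process; your restart argument is more elementary and avoids introducing $X^*$, but it leans a second time on the delicate path-wise reading of the coupling, now at the restarted configuration.
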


\begin{proof} Concerning the first part of the lemma, the fact that $\bar{X}\prec \widehat{X}$ on
$\widehat{\mathcal{E}}(L,0)$ follows from the definition of
$\widehat{\mathcal{E}}(L,0)$ combined with
\eqref{transi_barX},\eqref{transi_hatX}
using the same argument as in the previous lemma. Moreover, we have
$\widehat{\mathcal{E}}(L,0) \subset\{\widehat{X} \mbox{ never visits
site $L$}\}$. Hence \eqref{ek0} is a consequence of Corollary
\ref{corZinfty}.

We now prove (ii). We introduce an auxiliary walk $X^*$ on $\lin
-1,\infty \irin $ such that $\bar{X}\prec X^*$ and  coinciding with
$\widehat{X}$ on a set of positive probability. The walk $X^*$ is
reflected at $-1$ and with transition probabilities given for $x\geq 0$ by
\begin{equation*}
\pp_\mathcal{C}\{X^{*}_{n+1}=x-1 \; | \;
\mathcal{F}^{*}_n,X^{*}_n=x\}=\frac{w(Z^{*}_n(x-1))}{w(Z^{*}_n(x-1))+w(V^{*}_n(x+1))},
\end{equation*}
where the functional $V^*$ is defined by
\begin{equation*}
V^*_n(x) := \left\{
\begin{array}{ll}
\max(Z^*_n(1),N^*_n(0,1)+f(N^*_n(0,1))) & \mbox{ for $x= 1$}\\
\max(Z^*_n(x),(1+\varepsilon)N^*_n(x-1,x)) & \mbox{ for $x\neq 1$}.
\end{array}
\right.
\end{equation*}
Since $V^*_n\ge Z^*_n$, it follows clearly that $\bar{X}\prec
X^{*}$. Now set
\begin{equation*}
\mathcal{G}:=\widehat{\mathcal{E}}(L,M)\cap\{\forall n\ge 0,
X^{*}_n=\widehat{X}_n\}.
\end{equation*}
On $\widehat{\mathcal{E}}(L,M)$, there exists some $K\le L$ such
that, for all $n> M$,
\begin{equation*}
\widehat{X}_n \in \lin -1, K-1\rin
\quad\hbox{and}\quad\widehat{V}_n(x)= \left\{
\begin{array}{ll}
\widehat{N}_n(0,1)+f(\widehat{N}_n(0,1))) & \mbox{ for $x= 1$,}\\
(1+\varepsilon)\widehat{N}_n(x-1,x)) & \mbox{ for $x\in \lin 2, K
\rin$.}
\end{array}\right.
\end{equation*}
Therefore
\begin{equation*}
\mathcal{G}= \widehat{\mathcal{E}}(L,M)\cap \{\forall n\le M,\,
X^{*}_n=\widehat{X}_n\}.
\end{equation*}
By ellipticity, we have a.s. $\pp_\mathcal{C}\{\forall n\le M,\,
X^{*}_n=\widehat{X}_n\ | \ \mathcal{\widehat{F}}_{M}\}>0$.
Conditionally on $\mathcal{\widehat{F}}_{M}$, the events $\{\forall
n\le M,\; X^{*}_n=\widehat{X}_n\}$ and $\widehat{\mathcal{E}}(L,M)$
are independent. Assuming that
$\pp_\mathcal{C}\{\widehat{\mathcal{E}}(L,M)\}>0$, we deduce that
$\pp_\mathcal{C}\{\mathcal{G}\}>0$. Moreover, on $\mathcal{G}$, we
have $Z^{*}_\infty(x)= \widehat{Z}_\infty(x)=\widehat{Z}_{M}(x)$ for
all $x\ge L$ (i.e. $X^{*}$ stays in the interval $\lin -1, L-1 \rin$
after time $M$). Using  $\bar{X}\prec X^*$, Corollary
\ref{corZinfty} gives
\begin{equation*}\label{ZinftyXhatX}
\mathcal{G}\subset\{\forall x\ge L, \;
\bar{Z}_\infty(x)\le\widehat{Z}_{M}(x)\},
\end{equation*}
which implies
$$\pp_\kC\{\bar{X} \mbox{ eventually remains in the interval
$\lin -1, L-1 \rin$}\}\ge \pp_\kC\{\mathcal{G}\}>0.$$

\end{proof}

\section{The walk $\widetilde{X}$}\label{sectiontilde}
We now study the asymptotic behaviour of $\widetilde{X}$. This walk
is the easiest to analyse among those defined in the previous
section and it is possible to obtain a precise description of the
localization set. In fact, we can even show recurrence when the walk does
not localize.

We introduce some notation to help make the proof more readable by
removing unimportant constants. Given two (random) processes
$A_n,B_n$, we will write $A_n\equiv B_n$ when $A_n-B_n$ converges
a.s. to some (random) finite constant. Similarly we write
$A_n\lesssim B_n$ when $\limsup A_n-B_n$ is finite a.s..

\begin{prop}\label{ERRW} Let  $\mathcal{C}$ be a finite state. Recall that $\widetilde{R}$ denotes the set of sites visited
i.o. by $\widetilde{X}$. We have
\begin{equation*}
\lin -1,j_-(w)-1\rin \subset  \widetilde{R} \subset \lin
-1,j_+(w)-1\rin \qquad \pp_\kC\mbox{-a.s.}
\end{equation*}
In particular, the walk is either recurrent or localizes a.s.
depending on the finiteness of $j_\pm(w)$.
\end{prop}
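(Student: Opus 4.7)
My plan is to proceed by induction on $K \geq 0$, showing simultaneously (a) that $\widetilde{X}$ visits site $K$ infinitely often iff $K \leq j_{1/2,\pm}(w)-1$ (for the appropriate sign), and (b) that, conditionally on visiting $K$ i.o., $N_n(K,K+1)$ is asymptotically equivalent to $\Phi_{1/2,K+1}(Z_n(K))$ in a quantitative sense controlled by the slowly varying $\ell$. The key enabler is the restriction property of $\widetilde{X}$: since the jump probabilities at $x$ involve only $Z_n(x-1)$ and $N_n(x,x+1)$, the walk restricted to $\lin -1,K\rin$ evolves autonomously of what happens on $\lin K+1,\infty\irin$, and one may analyze the urn at site $K+1$ using only the inductive information accumulated up to site $K$.

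The base case $K=0$ is clean: reflection at $-1$ gives $Z_n(-1)=N_n(0,-1)+O(1)$, so the urn at $0$ is a symmetric P\'olya-type urn with both arms reinforced by $w$, and whose counters are themselves $Z_n(-1)$ and $N_n(0,1)$. A Rubin-type embedding (replacing each prospective jump by an independent exponential waiting time of rate $w(k)$) shows that $W(Z_n(-1))-W(N_n(0,1))$ stays a.s.\ bounded, whence Lemma \ref{limborne} yields $Z_n(-1)\sim N_n(0,1)\sim Z_n(0)/2$. This is exactly $N_n(0,1)\sim \Phi_{1/2,1}(Z_n(0))$ and starts the induction; note that $\Phi_{1/2,2}$ is always unbounded so sites $-1$ and $0$ necessarily belong to $\widetilde{R}$.

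The inductive step is the heart of the argument. Assume (b) at site $K$. The urn at site $K+1$ draws ``left'' at rate $w(Z_n(K))$ and ``right'' at rate $w(N_n(K+1,K+2))$. In the Rubin picture, the total left-time to reach $\ell$ left draws is $\int_0^\ell du/w(Z(K)\text{-value at the }u\text{-th left draw})$, and the induction hypothesis allows one to replace $Z(K)$ inside this integral by $\Phi_{1/2,K+1}^{-1}$ of the relevant left-counter. Matching this against $W$ applied to the number of right draws on the right axis produces \emph{precisely} the operator $H$ of \eqref{defH}, and therefore $N_n(K+1,K+2)\sim \Phi_{1/2,K+2}(Z_n(K+1))=H(\Phi_{1/2,K+1})(Z_n(K+1))$ as long as the latter is unbounded. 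Consequently (a) holds at $K+2$: the walk visits $K+2$ infinitely often iff the Rubin right-clock at $K+1$ diverges, iff $\Phi_{1/2,K+2}$ is unbounded. Running the argument with $\eta=\tfrac12-\delta$ and $\eta=\tfrac12+\delta$ and letting $\delta\downarrow0$ delivers the two inclusions $\lin -1,j_-(w)-1\rin\subset \widetilde{R}\subset \lin -1,j_+(w)-1\rin$.

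The main obstacle is quantitative: the $\sim$-relation in the induction hypothesis must be sharper than the slowly varying scale $\ell$, for otherwise the error terms would blow up under successive applications of $H$ and the induction would collapse. Lemmas \ref{limWWchapeau} and \ref{WL} are tailored exactly for this issue, and Lemma \ref{WL}(c),(d) provides a monotone majorant $f_\eta$ that absorbs second-order fluctuations while still satisfying $W-W_{f_\eta}=o(\ell)$. A secondary subtlety is that the exact symmetry $\eta=1/2$ at site $0$ is perturbed at interior sites by $O(1)$ discrepancies between site and edge local times, so the effective $\eta$ drifts slightly at each step; squeezing with $\eta=1/2\pm\delta$ and appealing to the monotonicity of $\eta\mapsto j_\eta(w)$ from Proposition \ref{prop_iegalj} absorbs this drift and accounts for the $j_+-j_-\in\{0,1\}$ gap between the inner and outer bounds on $\widetilde{R}$.
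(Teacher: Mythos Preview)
Your inductive scheme, squeezing $\widetilde{N}_k(j-1,j)$ between $\Phi_{1/2-\alpha,j}(\widetilde{Z}_k(j-1))$ and $\Phi_{1/2+\alpha,j}(\widetilde{Z}_k(j-1))$, is exactly the paper's approach (this is the induction hypothesis \eqref{hyprec}). The paper phrases the urn estimate via the $L^2$-bounded martingale
\[
\widetilde{M}_n(x)=\sum_{k<n} \frac{\mathbf{1}_{\{\widetilde X_k=x,\,\widetilde X_{k+1}=x+1\}}}{w(\widetilde{N}_k(x,x+1))}-\sum_{k<n} \frac{\mathbf{1}_{\{\widetilde X_k=x,\,\widetilde X_{k+1}=x-1\}}}{w(\widetilde{Z}_k(x-1))}
\]
rather than a Rubin embedding; the two are morally equivalent, though a literal Rubin construction is awkward at interior sites because the left-arm weight $w(\widetilde{Z}_n(x-1))$ is not governed solely by left-draws from $x$. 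You also misidentify the auxiliary lemmas: Lemmas \ref{limWWchapeau} and \ref{WL} are designed for the walk $\widehat{X}$ and play no role here; the inductive step for $\widetilde X$ uses only Lemma \ref{lemmtech}(ii) (namely $\Phi_{\eta,j}(\lambda x)=o(\Phi_{\eta',j}(x))$ for $\eta<\eta'$), which is precisely what absorbs the multiplicative slippage between successive applications of $H$.

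There is, however, a genuine gap: you never rule out $\widetilde{X}_n\to+\infty$. Your urn analysis at site $0$ presupposes that $0$ is visited infinitely often; if the walk escapes to $+\infty$, the urn at $0$ is drawn only finitely many times and nothing in the Rubin picture yields a contradiction. The restriction property does not help with this. The paper disposes of it in its opening paragraph via two steps you omit: (a) a conditional Borel--Cantelli argument showing that $x\in\widetilde{R}$ implies $x-1\in\widetilde{R}$, and (b) the comparison $\widetilde{X}\prec(\text{oriented edge-reinforced walk with weight }w)$, which together with Davis's recurrence result for the latter (valid since $\sum 1/w(k)=\infty$) and Corollary \ref{corZinfty}(ii) forbids $\widetilde X$ from diverging to $+\infty$. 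Without (b) or an equivalent, your induction has no anchor.
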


\begin{proof}
First, it is easy to check that the walk $\widetilde{X}$ is at the
left (in the sense of Proposition \ref{propEERW}) of an oriented
edge reinforced random walk with weight $w$ reflected at $-1$ that
is, a random walk which jumps from $x$ to $x+ 1$ with probability
proportional to $w(N_n(x,x+1))$ (where $N_n(x,x+1)$ is defined by
\eqref{defZandN}) and from $x$ to $x-1$ with probability
proportional to $w(N_n(x,x-1))$ where $N_n(x,x-1)$ is simply the
number of jumps from $x$ to $x-1$ before time $n$ (but without any
additional initial constant). Such a walk
 can be constructed from a family $(\mathcal{U}_x,x\geq 0)$ of
independent generalized Pólya $w$-urns where the sequence of draws in
the urn $\mathcal{U}_x$ corresponds to the sequence of jumps to
$x-1$ or $x+1$ when the walk is at site $x$. Using this
representation, Davis \cite{Dav} showed that, if $\kC$ is finite,
the oriented edge reinforced random walk is recurrent as soon as
$\sum 1/w(k)=\infty$ (more precisely, in \cite{Dav}, recurrence is
established for the non-oriented version of the edge reinforced walk
but the same proof also applies to the oriented version and is even
easier in that case).

In view of Corollary \ref{corZinfty}, it follows from the recurrence
of the oriented edge reinforced random walk that $\widetilde{X}$
cannot tend to infinity hence there exists at least one site which
is visited infinitely often. Next, noticing that
\begin{equation*}
\sum_{n=0}^\infty \pp_\kC\{\widetilde{X}_{n+1}=x-1 \; | \;
\widetilde{\mathcal{F}}_n\} \ \ge \ \sum_{n=0}^\infty
\frac{w(0)\mathbf{1}_{\{\widetilde{X}_n=x\}}}{w(0)+w(\widetilde{N}_n(x,x+1))}
\ \ge
 \ \sum_{n=n_0}^{\widetilde{Z}_{\infty}(x)} \frac{w(0)}{w(0)+w(n)}
\end{equation*}
the conditional Borel-Cantelli Lemma implies that if $x$ is visited
i.o., then so will $x-1$. By induction we deduce that $-1$ is
visited i.o. a.s. Now, we have to prove that any site $x \leq j_-(w)$ is
 visited i.o. but that $j_+(w)+1$ is not. More precisely, we show by
induction that for each $j\geq 1$:
\begin{equation}\label{hyprec}
\forall \alpha\in (0,1/2),\quad
\Phi_{1/2-\alpha,j}(\widetilde{Z}_k(j-1))
 \;\lesssim\; \widetilde{N}_k(j-1,j) \;\lesssim\; \Phi_{1/2+\alpha,j}(\widetilde{Z}_k(j-1)) \quad\mbox{
 a.s.}\end{equation}
where $(\Phi_{\eta,j})_{\eta\in (0,1), j\ge 1}$ is the sequence of
functions defined in \eqref{defphi}. For $x \ge 0$ ,  define
\begin{equation*}
\widetilde{M}_n(x) \; := \;\sum_{k=0}^{n-1}
\frac{\mathbf{1}_{\{\widetilde{X}_k=x\textrm{ and }\widetilde{X}_{k+1}=x+
1\}}}{w(\widetilde{N}_k(x,x+1))}-\sum_{k=0}^{n-1}
\frac{\mathbf{1}_{\{\widetilde{X}_k=x\textrm{ and }\widetilde{X}_{k+1}=x-
1\}}}{w(\widetilde{Z}_k(x- 1))}.
\end{equation*}
It is well known and easy to check that $(\widetilde{M}_n(x),n\ge
0)$ is a martingale bounded in $L^2$ which converges a.s. to a
finite random variable \emph{c.f.} for instance \cite{T2,BSS}.
Recalling the definition of $W$ given in \eqref{defW} we also have
$$W(n)\equiv \sum_{i=1}^{n-1} \frac{1}{w(i)}.$$
Hence, we get
$$\widetilde{M}_{n}(0)\equiv W(\widetilde{N}_n(0,1))-W(\widetilde{Z}_n(-1))$$
and the convergence of the martingale $\widetilde{M}_{n}(0)$
combined with Lemma \ref{limborne} yields
$$\lim_{n \rightarrow \infty}
\frac{\widetilde{N}_n(0,1)}{\widetilde{Z}_n(-1)}= 1
\quad\mbox{$\pp_{\kC}$-a.s.}$$ Noticing that $\widetilde{Z}_n(0)\sim
\widetilde{N}_n(0,1)+\widetilde{Z}_n(-1)$ and recalling that
$\Phi_{\eta,1}(x)=\eta x$ we conclude that \eqref{hyprec} holds for
$j=1$.

\medskip

Fix $j\ge 1$ and assume that \eqref{hyprec} holds for $j$. If
$\widetilde{N}_\infty(j-1,j)$ is finite, then
$\widetilde{Z}_\infty(j)$ and $\widetilde{N}_\infty(j,j+1)$ are
necessarily also finite so \eqref{hyprec} holds for $j+1$. Now assume that $\widetilde{N}_\infty(j-1,j)$ is infinite which, in
view of \eqref{hyprec}, implies that $\widetilde{Z}_\infty(j-1)$ is
also infinite and that
$$\lim_{t\to\infty} \Phi_{1/2+\alpha,j}(t)=\infty \quad \mbox{ for any $\alpha\in
(0,1/2)$.} $$
 Besides, the convergence of the
martingale $\widetilde{M}_n(j)$ yields
\begin{equation}\label{eqti}
W(\widetilde{N}_n(j,j+1))\equiv\sum_{k=0}^{n-1}
\frac{\mathbf{1}_{\{\widetilde{X}_k=j\textrm{ and }\widetilde{X}_{k+1}=j-
1\}}}{w(\widetilde{Z}_k(j- 1))}.
\end{equation}
 According to Lemma
\ref{lemmtech}, we have
$$\lim_{t\to\infty} \left( \Phi_{1/2+\alpha',j}(t)-\Phi_{1/2+\alpha,j}(t)\right)=\infty\quad \mbox{ for any $0<\alpha<\alpha'<1/2$,}$$
hence we get from \eqref{hyprec} that for $k$ large enough
$\widetilde{Z}_k(j-1)\ge
\Phi_{1/2+\alpha,j}^{-1}(\widetilde{N}_k(j-1,j))$. Combining this
with \eqref{eqti} yields
\begin{equation*}
W(\widetilde{N}_n(j,j+1))\lesssim
\sum_{k=0}^{\widetilde{N}_n(j-1,j)}
\frac{1}{w(\Phi_{1/2+\alpha,j}^{-1}(k))}.
\end{equation*}
Recalling the definition of the sequence $(\Phi_{\eta,j})_{j\ge 1}$
we obtain
\begin{equation}\label{eqti2}
W(\widetilde{N}_n(j,j+1))\lesssim
W(\Phi_{1/2+\alpha,j+1}(\widetilde{N}_n(j-1,j))).
\end{equation}
Thus, for $\alpha'>\alpha$ and for $k$ large enough, using Lemmas
\ref{limborne} and \ref{lemmtech}, we get
$$\widetilde{N}_k(j,j+1)\le 2\Phi_{1/2+\alpha,j+1}(\widetilde{N}_k(j-1,j))\le \Phi_{1/2+\alpha',j+1}(\widetilde{N}_k(j-1,j))\le \Phi_{1/2+\alpha',j+1}(\widetilde{Z}_k(j))$$
provided that $\lim_{t\to\infty}\Phi_{1/2+\alpha,j+1}(t) = \infty$.
When the previous limit is finite, it follows readily from
\eqref{eqti2} that $\widetilde{N}_\infty(j,j+1) < \infty$. Thus, in
any case, we obtain the required upper bound
\begin{equation}\label{eqti3}
\widetilde{N}_k(j,j+1) \lesssim
\Phi_{1/2+\alpha,j+1}(\widetilde{Z}_k(j)).
\end{equation}
Concerning the lower bound, there is nothing to prove if
$\lim_{t\to\infty}\Phi_{1/2-\alpha,j+1}(t) < +\infty$. Otherwise, it
follows from \eqref{eqti3} and Lemma \ref{lemmtech} that
$\widetilde{N}_k(j,j+1)=o(\widetilde{Z}_k(j))$. Moreover, using
exactly the same argument as before, we find that for $k$ large
enough
\begin{equation*}
\widetilde{N}_k(j,j+1)\ge
\Phi_{1/2-\alpha,j+1}(\widetilde{N}_k(j-1,j)).
\end{equation*}
Noticing that $\widetilde{N}_k(j-1,j) \sim
(\widetilde{Z}_k(j)-\widetilde{N}_k(j,j+1)) \sim
\widetilde{Z}_k(j)$, we conclude using again Lemma \ref{lemmtech}
that for $\alpha' > \alpha$ and for $k$ large enough,
\begin{equation*}
\widetilde{N}_k(j,j+1)\geq
\Phi_{1/2-\alpha',j+1}(\widetilde{Z}_k(j)),
\end{equation*}
which yields the lower bound of \eqref{hyprec}.

 Finally, choosing $\alpha>0$ small enough such that
$\lim_{t\to \infty} \Phi_{1/2+\alpha,j_+(w)}(t)<\infty$ we deduce
that $\widetilde{N}_\infty(j_+(w)-1,j_+(w))$ is finite hence
$\widetilde{Z}_\infty(j_+(w))$ is also finite. Conversely,
\eqref{hyprec} entails by a straightforward induction that
$\widetilde{Z}_\infty(j)=\infty$ for $j< j_-(w)$.
\end{proof}

\section{The walk $\widehat{X}$}\label{sectionhat}
We now turn our attention towards the walk $\widehat{X}$ which is
more delicate to analyse than the previous process so we only obtain
partial results concerning its asymptotic behaviour. In view of
Lemma \ref{probapositive}, we are mainly interested in finding the
smallest integer $L$ such that
$\pp_{\mathcal{C}}\{\widehat{\mathcal{E}}(L,M)\}>0$ for some $M$.
The purpose of this section is to prove the proposition below which
provides an upper bound for $L$ which is optimal when
$j_-(w)=j_+(w)$.

\begin{prop}\label{modERRW} Assume that $j_+(w)<\infty$. Then,
 for any initial state $\mathcal{C}$, there exists $M\ge 0$ such that
\begin{equation}\label{c1}
\pp_{\mathcal{C}}\{\widehat{\mathcal{E}}(j_+(w),M) \}>0.
\end{equation}
Moreover, there exists a reachable initial state
$\mathcal{C}'=(z'(x),n'(x,x+1))_{x\in \Z}$ which is zero outside of
the interval
  $\lin -1, j_+(w) \rin$   and with $n'(0,1)\ge n'(-1,0)$  such that
\begin{equation}\label{c2}
\pp_{\mathcal{C}'}\{\widehat{\mathcal{E}}(j_+(w),0)\}>3/4.
\end{equation}
\end{prop}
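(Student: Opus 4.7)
The plan is to adapt the proof of Proposition~\ref{ERRW} for $\widetilde{X}$: the two modifications in the definition of $\widehat{X}$ (the $f$-shift at site~$0$ and the $(1+\varepsilon)$-inflation of right-edge local times at $x\ge 1$) translate into a shift of the effective critical parameter from $1/2$ to some $\eta^\star\in(1/2,1/2+3\varepsilon)$, so that localization is controlled by $j_+(w)=j_{1/2+3\varepsilon}(w)$ thanks to our choice of $\varepsilon$. I would introduce at each site $x\ge 0$ the natural $L^2$-bounded martingale $\widehat{M}_n(x)$ whose increments are $\mathbf{1}_{\{\widehat{X}_k=x,\widehat{X}_{k+1}=x+1\}}/w(v_k(x))-\mathbf{1}_{\{\widehat{X}_k=x,\widehat{X}_{k+1}=x-1\}}/w(\widehat{Z}_k(x-1))$, with $v_k(0):=\widehat{N}_k(0,1)+f(\widehat{N}_k(0,1))$ and $v_k(x):=(1+\varepsilon)\widehat{N}_k(x,x+1)$ for $x\ge 1$. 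Its almost-sure convergence yields at $x=0$ the relation $W_f(\widehat{N}_n(0,1))\equiv W(\widehat{Z}_n(-1))$, which by Lemmas~\ref{limWWchapeau} and~\ref{WL} gives $\widehat{N}_n(0,1)\sim\widehat{Z}_n(-1)\sim\widehat{Z}_n(0)/2$ and anchors the induction.

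The inductive step on $j\ge 1$ is a line-by-line copy of the one in Proposition~\ref{ERRW}: combining the convergence of $\widehat{M}_n(j)$ with the inductive bound on $\widehat{Z}_k(j-1)$ and Lemmas~\ref{limborne} and~\ref{lemmtech}, one obtains, for every $\alpha>0$, $\Phi_{\eta^\star-\alpha,j}(\widehat{Z}_k(j-1))\lesssim\widehat{N}_k(j-1,j)\lesssim\Phi_{\eta^\star+\alpha,j}(\widehat{Z}_k(j-1))$; the $(1+\varepsilon)$-factor in $v_k$ produces a multiplicative shift inside $W$ that is absorbed by $\alpha$ via slow variation, and the $f$-correction at level $j=2$ only enters through $W-W_f=o(\ell)$. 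Choosing $\alpha$ small enough that $\eta^\star+\alpha<1/2+3\varepsilon$, the function $\Phi_{\eta^\star+\alpha,j_+(w)}$ is bounded, so $\widehat{Z}_\infty(j_+(w))<\infty$ almost surely. The inequalities defining $\widehat{\mathcal{E}}(j_+(w),M)$ can then be read off from the induction: $\widehat{Z}_n(1)\le\widehat{N}_n(0,1)+f(\widehat{N}_n(0,1))$ is equivalent to $\widehat{N}_n(2,1)\le f(\widehat{N}_n(0,1))$ and follows from the inductive upper bound at $j=2$ combined with $f\ge\Phi_{1/2+2\varepsilon,2}$ (Lemma~\ref{WL}(a)); the inequalities $\widehat{Z}_n(x)\le(1+\varepsilon)\widehat{N}_n(x-1,x)$ at $x\in\lin 2,K\rin$ follow because $\widehat{N}_n(x+1,x)\lesssim\Phi_{\eta^\star+\alpha,x+1}(\widehat{Z}_n(x))=o(\widehat{Z}_n(x))$ by Lemma~\ref{lemmtech}(i). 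These hold from some random $M$ onwards, proving~\eqref{c1}.

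For~\eqref{c2} I would apply a Markov restart. Using~\eqref{c1} with the trivial initial state, and ellipticity to force $\widehat{X}_M=0$, one exhibits a deterministic finite nearest-neighbour excursion of $\widehat{X}$ from $0$ to $0$ whose endpoint state $\kC'$ (i) is reachable and supported in $\lin -1,j_+(w)\rin$, (ii) satisfies $n'(0,1)\ge n'(-1,0)$ (as enforced by the martingale relation at site~$0$), and (iii) has every local time of interest exceeding a prescribed threshold $A$. Such an excursion has positive probability, and the Markov property identifies the continuation with $\pp_{\kC'}$. For $A$ large, the subsequent martingale fluctuations are negligible with respect to the slack built into $\kC'$, and a Doob maximal inequality combined with a union bound over the finitely many sites of $\lin -1,j_+(w)\rin$ pushes $\pp_{\kC'}\{\widehat{\mathcal{E}}(j_+(w),0)\}$ above $3/4$.

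The main difficulty lies precisely in this last quantitative step: the a.s.\ localization in the first two paragraphs is a delicate but essentially routine adaptation of Proposition~\ref{ERRW}, but producing a concrete reachable $\kC'$ under which the probability beats the explicit threshold $3/4$ requires effective control of the martingales $\widehat{M}_n(x)$. A secondary technical point is that the $f$-shift introduces an $o(\ell)$ yet \emph{unbounded} correction (Lemma~\ref{WL}(c)-(d)) whose propagation through the induction must not consume the slack $f-\Phi_{1/2+2\varepsilon,2}$ that is essential to the site-$1$ inequality.
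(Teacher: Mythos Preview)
There is a genuine gap in your first two paragraphs. You work directly with the unbounded walk $\widehat{X}$ and claim that the induction from Proposition~\ref{ERRW} goes through ``line by line'', yielding $\widehat{Z}_\infty(j_+(w))<\infty$ a.s. But the induction in Proposition~\ref{ERRW} is anchored by the fact that $\widetilde{X}$ cannot diverge to $+\infty$, which is proved there by a coupling with an oriented edge-reinforced walk. For $\widehat{X}$ that coupling fails: the $(1+\varepsilon)$-inflation of right-edge local times means $\widehat{X}$ is \emph{not} to the left of the ERRW, and nothing you have written rules out $\widehat{X}_n\to+\infty$ on a set of positive probability. On that event every site is visited finitely often, so the martingale limits carry no information, the relation $\widehat{N}_n(0,1)\sim\widehat{Z}_n(-1)$ is vacuous, and the third clause of $\widehat{\mathcal{E}}(L,M)$ (no visits to sites $\ge K$ after time $M$) simply fails. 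The paper flags this explicitly and bypasses it by first studying the reflected walk $\widehat{X}^{\scriptscriptstyle L}$ on $\lin-1,L\rin$, for which compactness trivially prevents escape; the estimates of Lemma~\ref{lemhatXK} are proved there, and one then transfers back to $\widehat{X}$ by choosing $L$ so that $\widehat{X}^{\scriptscriptstyle L}$ does not reach $L$ (and hence coincides with $\widehat{X}$) with positive probability. Your proposal is missing this entire mechanism.

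For~\eqref{c2} your Doob/union-bound plan is workable in principle but is much heavier than what the paper does: once \eqref{c1} is known (for the trivial state), the paper simply applies L\'evy's $0$--$1$ law to the bounded-martingale $\pp_\kC\{\widehat{\mathcal{E}}^{\scriptscriptstyle L}(L,0)\cap\{\widehat{N}_m^{\scriptscriptstyle L}(0,1)\ge\widehat{N}_m^{\scriptscriptstyle L}(-1,0)\ \forall m\}\mid\widehat{\mathcal{F}}_n^{\scriptscriptstyle L}\}$, which converges a.s.\ to the indicator; hence for $n$ large this conditional probability exceeds $3/4$ on the event itself, and any reachable state realizing such an $n$-step path serves as $\kC'$. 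This avoids any quantitative martingale control. Note also that the paper only proves the \emph{upper} bound $\widehat{N}_n^{\scriptscriptstyle L}(j,j+1)\lesssim\Phi_{\eta,j+1}(\widehat{Z}_n^{\scriptscriptstyle L}(j))$ for $\eta>1/2+\varepsilon$ directly; the matching lower bound you assert is obtained instead via the coupling $\widetilde{X}^{\scriptscriptstyle L}\prec\widehat{X}^{\scriptscriptstyle L}$, not by the martingale argument.
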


One annoying difficulty studying $\widehat{X}$ is that we cannot
easily exclude  the walk diverging to $+\infty$ on a set of
non-zero probability. In order to bypass this problem, we first
study the walk on a bounded interval. More precisely, for $L> 1$, we
define the walk $\widehat{X}^{\scriptscriptstyle{\! L}}$ on $\lin
-1,L \rin$ which is reflected at the boundary sites $-1$ and $L$,
with the same transition probabilities as $\widehat{X}$ in the
interior of the interval:
\begin{equation*}
\pp_\mathcal{C}\{\widehat{X}^{\scriptscriptstyle{\! L}}_{n+1}=x-1
\mid \widehat{\mathcal{F}}^{\scriptscriptstyle{\! L}}_n,\;
\widehat{X}^{\scriptscriptstyle{\! L}}_{n}=x\} =
\left\{
\begin{array}{ll}
0 &\hbox{ if $x = -1$,}\\
\frac{w(\widehat{Z}^{\scriptscriptstyle{\! L}}_n(-1))}{\vphantom{X^{X^X}}w(\widehat{Z}^{\scriptscriptstyle{\! L}}_n(-1))+w\left(\widehat{N}^{\scriptscriptstyle{\! L}}_n(0,1)+f(\widehat{N}^{\scriptscriptstyle{\! L}}_n(0,1))\right)}&\hbox{ if $x = 0$,}\\
\frac{\vphantom{X^{{X^X}^X}}w(\widehat{Z}^{\scriptscriptstyle{\! L}}_n(x-1))}{\vphantom{X^{X^X}}w(\widehat{Z}^{\scriptscriptstyle{\! L}}_n(x-1))+w\left((1+\varepsilon)\widehat{N}^{\scriptscriptstyle{\! L}}_n(x,x+1)\right)}&\hbox{ if $x \in \lin 1,L-1\rin$,}\\
1 &\hbox{ if $x = L$.}
\end{array}
\right.
\end{equation*}
The proof of Proposition \ref{modERRW} relies on the following lemma
which estimates the edge/site local times of
$\widehat{X}^{\scriptscriptstyle{\! L}}$.
\begin{lem}\label{lemhatXK} Let $\kC$ be an initial state and  $L>1$. For $n$ large enough, we have
\begin{equation}\label{desgauche}
 \widehat{N}^{\scriptscriptstyle{\! L}}_n(-1,0) \;\le\; \widehat{N}^{\scriptscriptstyle{\! L}}_n(0,1)\qquad \pp_\mathcal{C} \mbox{-a.s.}
\end{equation}
Moreover, for $\eta \in(1/2+\varepsilon,1)$ and $j \in \lin
0,L-1\rin$,
\begin{equation}\label{hyprec2}
\widehat{N}^{\scriptscriptstyle{\! L}}_n(j,j+1) \;\lesssim\;
\Phi_{\eta,j+1}\big(\widehat{Z}^{\scriptscriptstyle{\!
L}}_n(j)\big).
\end{equation}
\end{lem}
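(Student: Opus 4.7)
The plan is to mimic the martingale-based proof of Proposition \ref{ERRW}, adapting for the two new features of $\widehat{X}^L$: the shift of the edge local time by $f$ at site $0$, and its dilation by $1+\varepsilon$ at interior sites. Since $\widehat{X}^L$ is reflected on the finite interval $\lin -1,L\rin$, at least one site is visited infinitely often, and a conditional Borel-Cantelli argument (as in Proposition \ref{ERRW}) propagates recurrence to every adjacent site; hence all edge and site local times tend to infinity $\pp_\kC$-almost surely.

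For \eqref{desgauche}, I would introduce the standard $L^2$-bounded martingale attached to site $0$,
\begin{equation*}
\widehat M^L_n(0):=\sum_{k=0}^{n-1}\frac{\mathbf{1}_{\{\widehat X^L_k=0,\,\widehat X^L_{k+1}=1\}}}{w\bigl(\widehat N^L_k(0,1)+f(\widehat N^L_k(0,1))\bigr)}-\sum_{k=0}^{n-1}\frac{\mathbf{1}_{\{\widehat X^L_k=0,\,\widehat X^L_{k+1}=-1\}}}{w(\widehat Z^L_k(-1))},
\end{equation*}
whose a.s. convergence, after parametrising each sum by its jumps and comparing Riemann sums to integrals, yields $W_f(\widehat N^L_n(0,1))\equiv W(\widehat Z^L_n(-1))$. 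Since $W-W_f\to+\infty$ by Lemma \ref{WL}(d), this gives $W(\widehat N^L_n(0,1))-W(\widehat Z^L_n(-1))\to+\infty$, hence $\widehat N^L_n(0,1)>\widehat Z^L_n(-1)\ge\widehat N^L_n(-1,0)$ eventually, proving \eqref{desgauche}. Applying Lemma \ref{limWWchapeau} to the same identity (legitimate since $W-W_f=o(\ell)$ by Lemma \ref{WL}(c)) moreover gives $\widehat N^L_n(0,1)/\widehat Z^L_n(-1)\to 1$, and since $\widehat Z^L_n(0)\sim \widehat N^L_n(-1,0)+\widehat N^L_n(1,0)\sim 2\widehat N^L_n(0,1)$, we obtain $\widehat N^L_n(0,1)/\widehat Z^L_n(0)\to 1/2<\eta$ for any $\eta>1/2+\varepsilon$, which is the base case $j=0$ of \eqref{hyprec2}.

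The inductive step from $j-1$ to $j\ge 1$ uses the analogous martingale at site $j$, now involving $w((1+\varepsilon)\widehat N^L_k(j,j+1))$ on the right-jump side and $w(\widehat Z^L_k(j-1))$ on the left-jump side. A direct change of variables shows that its positive part equals, modulo a bounded term, $\tfrac{1}{1+\varepsilon}W\bigl((1+\varepsilon)\widehat N^L_n(j,j+1)\bigr)$. The negative part is bounded above exactly as in Proposition \ref{ERRW} using the induction hypothesis $\widehat Z^L_k(j-1)\gtrsim \Phi_{\eta_0,j}^{-1}(\widehat N^L_k(j-1,j))$ (valid for any $\eta_0>1/2+\varepsilon$), together with the defining identity $W\circ\Phi_{\eta_0,j+1}(x)=\int_0^x du/w(\Phi_{\eta_0,j}^{-1}(u))$ and $\widehat N^L_n(j,j-1)=\widehat N^L_n(j-1,j)+O(1)$, leading to an upper bound of the form $W(\Phi_{\eta_0,j+1}(\widehat N^L_n(j-1,j)))$. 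Combining these estimates, using the slow variation of $W$ and the trivial $\widehat N^L_n(j-1,j)\le \widehat Z^L_n(j)$, yields the intermediate bound
\begin{equation*}
W(\widehat N^L_n(j,j+1))\;\lesssim\;(1+\varepsilon)\,W\bigl(\Phi_{\eta_0,j+1}(\widehat Z^L_n(j))\bigr).
\end{equation*}

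The main technical hurdle is to convert this into the desired bound $\widehat N^L_n(j,j+1)\lesssim\Phi_{\eta,j+1}(\widehat Z^L_n(j))$ for $\eta>1/2+\varepsilon$: unlike in Proposition \ref{ERRW}, the multiplicative factor $1+\varepsilon$ sits \emph{outside} the function $W$, so Lemma \ref{limborne}\eqref{limborne_eq2} does not apply directly. To handle this, I plan to work with the conjugate functions $\varphi_{\eta,j+1}=W\circ\Phi_{\eta,j+1}\circ W^{-1}$ from Proposition \ref{prop_iegalj}, and exploit Lemma \ref{lemmtech}(ii), which asserts $\Phi_{\eta_0,j+1}(\lambda x)=o(\Phi_{\eta,j+1}(x))$ for any $\lambda>0$ and $\eta>\eta_0$. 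This latter property allows one to absorb the constant factor $1+\varepsilon$ into an arbitrarily small increase of the index $\eta_0$, and since $\eta_0>1/2+\varepsilon$ can itself be chosen arbitrarily close to the threshold, the claimed bound for all $\eta>1/2+\varepsilon$ should follow.
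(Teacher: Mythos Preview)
Your approach is essentially the paper's: the same $L^2$ martingales at each site, the same use of Lemma~\ref{WL}(c)--(d) for the base case and \eqref{desgauche}, and the same induction on~$j$ using Lemma~\ref{lemmtech}(ii). Two points deserve comment.

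First, the claim that ``a conditional Borel--Cantelli argument propagates recurrence to every adjacent site'' is overclaimed: the argument from Proposition~\ref{ERRW} only propagates recurrence \emph{leftward} (it uses $\widetilde N_n(x,x+1)\le \widetilde Z_n(x)$), so you cannot conclude that all local times diverge for arbitrary~$L$. The paper does not assert this; instead it notes that if $\widehat N^L_\infty(j-1,j)<\infty$ then $\widehat N^L_\infty(j,j+1)<\infty$ as well (the walk eventually stops crossing $(j-1,j)$ from the left, hence stays in $\lin-1,j-1\rin$), in which case \eqref{hyprec2} is trivial. This is a one-line fix, but your version as written has a gap.

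Second, the ``technical hurdle'' you identify is real for the route you take, but the paper sidesteps it entirely by applying Lemma~\ref{lemmtech}(ii) one step earlier. Rather than bounding the left-jump sum by $W(\Phi_{\eta_0,j+1}(\cdot))$ and then trying to absorb the external factor $1+\varepsilon$, the paper uses the induction hypothesis at some $\eta'\in(1/2+\varepsilon,\eta)$ together with Lemma~\ref{lemmtech}(ii) at level~$j$ (not $j+1$) to obtain directly
\[
\widehat Z^L_k(j-1)\;\gtrsim\;(1+2\varepsilon)\,\Phi_{\eta,j}^{-1}\bigl(\widehat N^L_k(j-1,j)\bigr).
\]
The factor $1+2\varepsilon$ then sits inside~$w$ in the denominator, and regular variation gives $w((1+2\varepsilon)t)\ge(1+\tfrac{3\varepsilon}{2})w(t)$ for large~$t$, so the sum is bounded by $\tfrac{1}{1+3\varepsilon/2}W(\Phi_{\eta,j+1}(\cdot))$. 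Since $(1+\varepsilon)W_\varepsilon\sim W$, this yields $W(\widehat N^L_n(j,j+1))\le W(\Phi_{\eta,j+1}(\widehat Z^L_n(j)))$ for large~$n$ with no leftover constant, and monotonicity of~$W$ finishes. Your proposed fix via the conjugate functions $\varphi_{\eta,j+1}$ would ultimately unwind to the same computation, but it is more circuitous.
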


\begin{proof} The proof is fairly similar to that
of Proposition \ref{ERRW}. First, since
$\widehat{X}^{\scriptscriptstyle{\! L}}$ has compact support, the
set $\widehat{R}^{\scriptscriptstyle{\! L}}$ of sites visited
infinitely often by the walk is necessarily not empty. Furthermore,
noticing that $\sum 1/w((1+\varepsilon)n)$ is infinite since $w$ is
regularly varying, the same arguments as those used for dealing with
$\widetilde{X}$ show that $\widehat{X}^{\scriptscriptstyle{\! L}}$
visits site $0$ infinitely often a.s.

We first prove \eqref{desgauche} together with \eqref{hyprec2} for
$j=0$. As before, it is easily checked that
\begin{eqnarray*}
\widehat{M}^{\scriptscriptstyle{\! L}}_n(0) &:=& \sum_{k=0}^{n-1}
\frac{\mathbf{1}_{\{\widehat{X}^{\scriptscriptstyle{\!
L}}_k=0\textrm{ and }\widehat{X}^{\scriptscriptstyle{\! L}}_{k+1}=
1\}}}{w(\widehat{N}^{\scriptscriptstyle{\!
L}}_k(0,1)+f(\widehat{N}^{\scriptscriptstyle{\!
L}}_k(0,1)))}-\sum_{k=0}^{n-1}
\frac{\mathbf{1}_{\{\widehat{X}^{\scriptscriptstyle{\!
L}}_k=0\textrm{ and
}\widehat{X}^{\scriptscriptstyle{\! L}}_{k+1}=- 1\}}}{w(\widehat{Z}^{\scriptscriptstyle{\! L}}_k(-1))}\\
\end{eqnarray*}
is a martingale bounded in $L^2$ with converges to some finite
constant. Besides, recalling the definitions of $W$ and $W_f$, we
have
\begin{equation}\label{martmod0}
\widehat{M}^{\scriptscriptstyle{\! L}}_n(0)\equiv
W_f(\widehat{N}^{\scriptscriptstyle{\!
L}}_n(0,1))-W(\widehat{Z}^{\scriptscriptstyle{\! L}}_n(-1)).
\end{equation}
Since $0$ is visited infinitely often and since $W$ and $W_f$ are
unbounded, Equation \eqref{martmod0} implies that $-1$ and $1$ are
also visited infinitely often a.s. Recalling that $f$ satisfies (c)
of Lemma \ref{WL}, Lemma \ref{limWWchapeau} entails
\begin{equation}\label{eqlim-11}
\lim_{n \rightarrow \infty} \frac{\widehat{N}^{\scriptscriptstyle{\!
L}}_n(0,1)}{\widehat{Z}^{\scriptscriptstyle{\! L}}_n(-1)}=1 \qquad
\pp_\mathcal{C}\mbox{-a.s.}
\end{equation}
Using $\widehat{Z}^{\scriptscriptstyle{\! L}}_n(0)\sim
\widehat{Z}^{\scriptscriptstyle{\!
L}}_n(-1)+\widehat{N}^{\scriptscriptstyle{\! L}}_n(0,1)$, we find
for $\delta>1/2$ and for $n$ large enough,
\begin{equation}\label{eqj1}
\widehat{N}^{\scriptscriptstyle{\! L}}_n(0,1) \;\le\;
\delta\widehat{Z}^{\scriptscriptstyle{\! L}}_n(0) \; = \;
\Phi_{\delta,1}(\widehat{Z}^{\scriptscriptstyle{\! L}}_n(0)),
\end{equation}
 which, in particular, proves \eqref{hyprec2} for $j=0$. Moreover,
using  $\widehat{N}^{\scriptscriptstyle{\! L}}_n(-1,0)\le
\widehat{Z}^{\scriptscriptstyle{\! L}}_n(-1)+c$ for some constant
$c$ depending only on $\kC$, the fact that $W(x+c)-W(x)$ tends to
$0$ at infinity and recalling that $f$ satisfies (d) of Lemma
\ref{WL}, we deduce from \eqref{martmod0} that
\begin{equation*}
\lim_{n\rightarrow \infty} W(\widehat{N}^{\scriptscriptstyle{\!
L}}_n(0,1))-W(\widehat{N}^{\scriptscriptstyle{\! L}}_n(-1,0))=\infty
\qquad \pp_\mathcal{C}\mbox{-a.s.}
\end{equation*}
Since $W$ is non-decreasing, this shows that \eqref{desgauche}
holds.

We now prove \eqref{hyprec2} by induction on $j$. The
same martingale argument as before shows that
\begin{equation}\label{martmodi}
W_{\varepsilon}(\widehat{N}^{\scriptscriptstyle{\!
L}}_n(x,x+1))\equiv\sum_{k=0}^{n-1}
\frac{\mathbf{1}_{\{\widehat{X}^{\scriptscriptstyle{\!
L}}_k=x\textrm{ and }\widehat{X}^{\scriptscriptstyle{\! L}}_{k+1}=x-
1\}}}{w(\widehat{Z}^{\scriptscriptstyle{\! L}}_k(x- 1))}\qquad
\mbox{ for $x\in \lin 1,L-1\rin$},
\end{equation}
where we recall the notation $W_{\varepsilon} := W_\psi$ for
$\psi(x):=\varepsilon x$. Assume that \eqref{hyprec2} holds for
$j-1\in \lin 0, L-2 \rin$ and fix  $\eta \in (1/2+\varepsilon,1)$.
If $\widehat{N}^{\scriptscriptstyle{\! L}}_\infty(j-1,j)$ is finite,
then $\widehat{N}^{\scriptscriptstyle{\! L}}_\infty(j,j+1)$ is also
finite and the proposition holds for $j$. Hence, we assume that
$\widehat{N}^{\scriptscriptstyle{\! L}}_\infty(j-1,j) $ and
$\widehat{N}^{\scriptscriptstyle{\! L}}_\infty(j,j+1)$ are both
infinite. If $j=1$, we get, using \eqref{eqj1}, that for $n$ large enough,
$$\widehat{Z}^{\scriptscriptstyle{\! L}}_n(0)\;\geq\;  \frac{(1+2\varepsilon)}{\eta}\widehat{N}^{\scriptscriptstyle{\! L}}_n(0,1) = (1+2\varepsilon)\Phi_{\eta,1}^{-1}\big(\widehat{N}^{\scriptscriptstyle{\! L}}_n(0,1)\big).$$
On the other hand, if $j>1$, recalling that $\Phi_{\beta,j}(\lambda
t) \;\lesssim\; \Phi_{\alpha,j}(t)$ for $\alpha>\beta$ and
$\lambda>0$, we get using the recurrence hypothesis with $\eta'\in
(1/2+\varepsilon, \eta)$
$$\widehat{Z}^{\scriptscriptstyle{\! L}}_k(j- 1)\;\gtrsim\; \Phi_{\eta',j}^{-1}\big(\widehat{N}^{\scriptscriptstyle{\! L}}_k(j-1,j)\big)\;\gtrsim\;  (1+2\varepsilon)\Phi_{\eta,j}^{-1}\big(\widehat{N}^{\scriptscriptstyle{\! L}}_k(j-1,j)\big).$$
In any case, \eqref{martmodi} gives, for any $j\geq 1$,
\begin{eqnarray*}
W_{\varepsilon}(\widehat{N}^{\scriptscriptstyle{\! L}}_n(j,j+1))
&\lesssim& \sum_{k=0}^{n-1}
\frac{\mathbf{1}_{\{\widehat{X}^{\scriptscriptstyle{\!
L}}_k=j\textrm{ and }\widehat{X}^{\scriptscriptstyle{\! L}}_{k+1}=j-
1\}}}{w\big((1+2\varepsilon)\Phi_{\eta,j}^{-1}(\widehat{N}^{\scriptscriptstyle{\!
L}}_k(j-1,j))\big)}\\
&\lesssim& \sum_{k=0}^{\widehat{N}^{\scriptscriptstyle{\!
L}}_n(j-1,j)}
\frac{1}{w\big((1+2\varepsilon)\Phi_{\eta,j}^{-1}(k)\big)}\\
 &\lesssim& \frac{1}{1+\frac{3\varepsilon}{2}}
W(\Phi_{\eta,j+1}(\widehat{N}^{\scriptscriptstyle{\! L}}_n(j-1,j))),
\end{eqnarray*}
where we used the regular variation of $w$ for the last inequality.
Noticing also that $(1+\varepsilon)W_{\varepsilon}(x)\sim W(x)$  we
get, for $n$ large enough,
$$W(\widehat{N}^{\scriptscriptstyle{\! L}}_n(j,j+1))\le W(\Phi_{\eta,j+1}(\widehat{N}^{\scriptscriptstyle{\! L}}_n(j-1,j)))\le W(\Phi_{\eta,j+1}(\widehat{Z}^{\scriptscriptstyle{\! L}}_n(j))),$$
which concludes the proof of the lemma.
\end{proof}

\begin{proof}[Proof of Proposition \ref{modERRW}]
Before proving the  proposition, we prove a similar statement for
the reflected random walk $\widehat{X}^{\scriptscriptstyle{\! L}}$.
On the one hand, recalling that $\varepsilon$ is chosen small enough
such that $\Phi_{1/2+2\varepsilon,j_+(w)}$ is bounded, the previous
lemma insures that, for any $L$, the reflected random walk
$\widehat{X}^{\scriptscriptstyle{\! L}}$ visits site $j_+(w)$ only
finitely many time a.s. On the other hand, denoting
$\widetilde{X}^{\scriptscriptstyle{\! L}}$ the walk $\widetilde{X}$
restricted to $\lin -1,L\rin$ (reflected at $L$), it is
straightforward  that $\widetilde{X}^{\scriptscriptstyle{\! L}}\prec
\widehat{X}^{\scriptscriptstyle{\! L}}$. Copying the proof of
Proposition \ref{ERRW}, we find that, for $L\ge j_-(w)-1$,
$\widetilde{X}^{\scriptscriptstyle{\! L}}$ visits a.s. all sites of the
interval $\lin -1, j_-(w)-1\rin$ infinitely often. Thus,
according to Corollary \ref{corZinfty}, the walk
$\widehat{X}^{\scriptscriptstyle{\! L}}$ also visits a.s. all sites of
the interval $\lin-1,j_-(w)-1\rin$ infinitely often.

Now fix $L$ to be the largest integer such that the walk
$\widehat{X}^{\scriptscriptstyle{\! L}}$ satisfies
\begin{equation}\label{defJ}
\pp_\kC \{\widehat{Z}^{\scriptscriptstyle{\!
L}}_\infty(L-1)=\infty\}>0 \qquad \mbox{ and } \qquad \pp_\kC
\{\widehat{Z}^{\scriptscriptstyle{\! L}}_\infty(L)=\infty\}=0.
\end{equation}
Noticing that $\widehat{X}^{\scriptscriptstyle{\!L-1}}\prec
\widehat{X}^{\scriptscriptstyle{\! L}}$, it follows from the
previous observations that $L$ is well defined with $L\in \{j_-(w),
j_+(w)\}$ (the index $L$ can, \emph{a priori}, depend on $\kC$). We
prove that, if the initial state $\kC = (z(x),n(x,x+1))_{x\in \Z}$
satisfies
\begin{equation}\label{inikc}
z(x)\le (1+\varepsilon)n(x-1,x) \quad \mbox{ for $1\le x\le
j_+(w)$},
\end{equation}
then
\begin{equation}\label{eqpar1}
\lim_{M\rightarrow \infty}
\pp_{\mathcal{C}}\big\{\widehat{\mathcal{E}}^{\scriptscriptstyle{\!
L}}(L,M)\cap\{\forall m\ge M,\; \widehat{N}^{\scriptscriptstyle{\!
L}}_{m}(0,1)\ge \widehat{N}^{\scriptscriptstyle{\!
L}}_{m}(-1,0)\}\big\}\ge \pp_\kC
\{\widehat{Z}^{\scriptscriptstyle{\! L}}_\infty(L-1)=\infty\} \;>\;
0,
\end{equation}
where the event $\widehat{\mathcal{E}}^{\scriptscriptstyle{\!
L}}(L,M)$ is defined in the same way as $\widehat{\mathcal{E}}(L,M)$
with $\widehat{X}^{\scriptscriptstyle{\! L}}$ in place of
$\widehat{X}$. Indeed, the previous lemma yields
\begin{equation}\label{eqpar2}
\lim_{M\rightarrow \infty} \pp_{\mathcal{C}}\{ \forall m\ge M,\;
\widehat{N}^{\scriptscriptstyle{\! L}}_{m}(0,1)\ge
\widehat{N}^{\scriptscriptstyle{\! L}}_{m}(-1,0)\}=1.
\end{equation}
 Moreover, in view of \eqref{inikc}, for any $n\ge 0$ we
 have
 \begin{equation}\label{pE1}
\widehat{Z}^{\scriptscriptstyle{\! L}}_n(L)\le
(1+\varepsilon)N_n(L-1,L).
\end{equation}
Notice also that, for $j\ge 1$ and $\gamma > 1/2 + \varepsilon$,
$$\widehat{Z}^{\scriptscriptstyle{\! L}}_n(j)\;\lesssim_n\; \widehat{N}^{\scriptscriptstyle{\! L}}_n(j-1,j)+\widehat{N}^{\scriptscriptstyle{\! L}}_n(j,j+1)\;\lesssim_n\; \widehat{N}^{\scriptscriptstyle{\! L}}_n(j-1,j)+\Phi_{\gamma,j+1}(\widehat{Z}^{\scriptscriptstyle{\! L}}_n(j)),$$
where we used Lemma \ref{lemhatXK} for the upper bound. Since
$\Phi_{\gamma,j+1}(x)=o(x)$, it follows that, on the event $\{
\widehat{Z}^{\scriptscriptstyle{\! L}}_\infty(j)=\infty \}$,
\begin{equation}\label{pE2}
\widehat{Z}^{\scriptscriptstyle{\! L}}_n(j)\le
(1+\varepsilon)\widehat{N}^{\scriptscriptstyle{\! L}}_n(j-1,j)
\quad\hbox{ for $n$ large enough}.
\end{equation}
This bound can be improved for $j=1$. More precisely, for
$\gamma\in(1/2+\varepsilon,1/2+2\varepsilon)$ and $n$ large enough,
we have
\begin{eqnarray}
\nonumber\widehat{Z}^{\scriptscriptstyle{\! L}}_n(1) &\leq& \widehat{N}^{\scriptscriptstyle{\! L}}_n(0,1)+\Phi_{\gamma,2}(\widehat{Z}^{\scriptscriptstyle{\! J}}_n(1))\\
\nonumber&\leq&  \widehat{N}^{\scriptscriptstyle{\! L}}_n(0,1)+\Phi_{\gamma,2}((1+\varepsilon)\widehat{N}^{\scriptscriptstyle{\! L}}_n(0,1))\\
\nonumber&\leq& \widehat{N}^{\scriptscriptstyle{\! L}}_n(0,1)+\Phi_{1/2+2\varepsilon,2}(\widehat{N}^{\scriptscriptstyle{\! L}}_n(0,1)) \\
\label{pE3}&\leq&
\widehat{N}^{\scriptscriptstyle{\!L}}_n(0,1)+f(\widehat{N}^{\scriptscriptstyle{\!L}}_n(0,1)),
\end{eqnarray}
where we used Lemma \ref{lemmtech} for the third inequality and the
fact that $f$ satisfies (a) of Lemma \ref{WL} with $\eta = 1/2 +
2\varepsilon$ for the last inequality. Putting \eqref{defJ},
\eqref{pE1}, \eqref{pE2} and \eqref{pE3} together, we conclude that
$$\{\widehat{Z}^{\scriptscriptstyle{\! L}}_\infty(L-1)=\infty\}\subset
\bigcup_{M\geq 0}\widehat{\mathcal{E}}^{\scriptscriptstyle{\! L}}(L,M).$$
This
combined with \eqref{eqpar2}, proves \eqref{eqpar1}.

Still assuming that the initial state $\kC$ satisfies \eqref{inikc},
it follows from \eqref{eqpar1} that there exists $M$ such that
$\widehat{\mathcal{E}}^{\scriptscriptstyle{\! L}}(L,M)$ has positive
probability under $\pp_{\kC}$. On this event, the reflected walk
$\widehat{X}^{\scriptscriptstyle{\! L}}$  visits site $L$ finitely many times and thus
 $$\pp_{\mathcal{C}}\{\widehat{\mathcal{E}}^{\scriptscriptstyle{\! L}}(L,M)\cap \{\widehat{X}^{\scriptscriptstyle{\! L}} \mbox{coincides with } \widehat{X} \mbox{ forever}\}\}>0,$$
which yields
$$\pp_{\mathcal{C}}\{\widehat{\mathcal{E}}(j_+(w),M)\}\ge \pp_{\mathcal{C}}\{\widehat{\mathcal{E}}(L,M)\}>0.$$
This proves the first part of the proposition under Assumption
\eqref{inikc}. In order to treat the general case, we simply notice
that, from any initial state, the walk has a positive probability of
reaching a state satisfying \eqref{inikc}.

It remains to prove the second part of the proposition. Let $L_0$ be
the index $L$ defined in \eqref{defJ} associated with the trivial
initial state. Recalling that a state is reachable i.f.f. it can be
created from the trivial state by an excursion of a walk away from
$0$, we deduce from \eqref{eqpar1} that there exists a reachable
state $\kC$ equal to zero outside the interval $\lin -1, L_0\rin$
such that
\begin{equation}
\pp_{\mathcal{C}}\big\{\widehat{\mathcal{E}}^{\scriptscriptstyle{\!
L_0}}(L_0,0)\cap\{\forall m\ge 0,\;
\widehat{N}^{\scriptscriptstyle{\! L_0}}_{m}(0,1)\ge
\widehat{N}^{\scriptscriptstyle{\! L_0}}_{m}(-1,0)\}\big\}>0.
\end{equation}
Moreover, we have
\begin{multline*}
\lim_{n\to
\infty}\pp_{\mathcal{C}}\big\{\widehat{\mathcal{E}}^{\scriptscriptstyle{\!
L_0}}(L_0,0)\cap\{\forall m\ge 0,\;
\widehat{N}^{\scriptscriptstyle{\! L_0}}_{m}(0,1)\ge
\widehat{N}^{\scriptscriptstyle{\! L_0}}_{m}(-1,0)\}\;|\;
\widehat{\mathcal{F}}^{\scriptscriptstyle{\!
L_0}}_n\big\}\\
=\mathbf{1}_{\widehat{\mathcal{E}}^{\scriptscriptstyle{\!
L_0}}(L_0,0)\cap\{\forall m\ge 0,\;
\widehat{N}^{\scriptscriptstyle{\! L_0}}_{m}(0,1)\ge
\widehat{N}^{\scriptscriptstyle{\! L_0}}_{m}(-1,0)\}} \quad
\pp_{\kC}\hbox{-a.s.}
\end{multline*}
Hence, there exists a reachable state $\kC'=(z'(x),n'(x,x+1))_{x\in
\Z}$ equal to zero outside the interval $\lin -1, L_0\rin$ such that
\begin{equation}
\pp_{\mathcal{C}'}\big\{\widehat{\mathcal{E}}^{\scriptscriptstyle{\!
L_0}}(L_0,0)\cap\{\forall m\ge 0,\;
\widehat{N}^{\scriptscriptstyle{\! L_0}}_{m}(0,1)\ge
\widehat{N}^{\scriptscriptstyle{\!
L_0}}_{m}(-1,0)\}\big\}>\frac{3}{4}.
\end{equation}
In particular, $\kC'$ satisfies the hypotheses of the proposition.
Finally, on the event $\widehat{\mathcal{E}}^{\scriptscriptstyle{\!
L_0}}(L_0,0)$, the reflected walk
$\widehat{X}^{\scriptscriptstyle{\! L_0}}$ and $\widehat{X}$
coincide forever since they never visit site $L_0$. We conclude
that
$$\pp_{\mathcal{C}'}\{\widehat{\mathcal{E}}(j_+(w),0)\}\ge \pp_{\mathcal{C}'}\{\widehat{\mathcal{E}}(L_0,0)\}>3/4.$$
\end{proof}

\section{The walk $\bar{X}$} \label{sectionbar}
Gathering results concerning $\widetilde{X}$ and $\widehat{X}$
obtained in Sections \ref{sectiontilde} and \ref{sectionhat} we can
now describe the asymptotic behaviour of the reflected VRRW
$\bar{X}$ on the half line. The following proposition is the
counterpart of Theorem \ref{locps} for $\bar{X}$ instead of $X$.

\begin{prop}\label{locabarX} Let $\kC$ be a finite state. Under $\pp_\kC$, the following equivalences hold
\begin{equation*}
j_{\pm}(w)<\infty \quad \Longleftrightarrow \quad \bar{X} \mbox{
localizes with positive probability } \quad \Longleftrightarrow
\quad \bar{X} \mbox{ localizes a.s.}
\end{equation*}
Moreover, if the indexes $j_{\pm}(w)$ are finite, we have
\begin{eqnarray*}
&(i)&\pp_\mathcal{C}\{|\bar{R}| \le j_-(w)\}=0, \\
&(ii)&\pp_\mathcal{C}\{|\bar{R}| \le j_+(w)+1\}>0.
\end{eqnarray*}
\end{prop}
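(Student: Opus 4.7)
The plan is to leverage the two couplings from Section~\ref{seccouplage}, namely $\widetilde X\prec\bar X$ (Lemma~\ref{proptildeleftbar}) and the conditional coupling $\bar X\prec\widehat X$ valid on $\widehat{\mathcal{E}}(L,0)$ (Lemma~\ref{probapositive}), together with the asymptotic descriptions of $\widetilde X$ (Proposition~\ref{ERRW}) and $\widehat X$ (Proposition~\ref{modERRW}). The non-localization direction $j_\pm(w)=\infty\Rightarrow \bar X$ does not localize a.s.\ is immediate: Proposition~\ref{ERRW} forces $\widetilde X$ to be recurrent, and Corollary~\ref{corZinfty}(i) applied to $\widetilde X\prec\bar X$ then rules out $\bar X$ having a finite range.

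Assume now $j_\pm(w)<\infty$. Statement~(ii) and the ``localizes with positive probability'' direction follow at once by combining Proposition~\ref{modERRW} (which gives $\pp_\kC\{\widehat{\mathcal{E}}(j_+(w),M)\}>0$ for some $M$) with Lemma~\ref{probapositive}(ii), producing $\pp_\kC\{\bar X\text{ ultimately remains in }\lin -1,j_+(w)-1\rin\}>0$, an interval of cardinality $j_+(w)+1$. For~(i), observe first that $\bar R$ is always an interval since $\bar X$ is nearest-neighbour. A conditional Borel--Cantelli argument exploiting the sub-linear reinforcement yields $-1\in\bar R$ on $\{\bar X\text{ localizes}\}$: if $\min\bar R=a\ge 0$ then $\bar Z_\infty(a-1)<\infty$ while $\bar Z_n(a+1)\to\infty$, and the transition probability from $a$ to $a-1$ is of order $\ell(\bar Z_n(a+1))/\bar Z_n(a+1)$, whose sum over the infinitely many visits to $a$ diverges since $\ell\to\infty$, contradicting $a-1\notin\bar R$. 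Combining $-1\in\bar R$ with Corollary~\ref{corZinfty}(i) applied to $\widetilde X\prec\bar X$ and with the inclusion $\lin -1,j_-(w)-1\rin\subset\widetilde R$ from Proposition~\ref{ERRW}, I conclude that $\max\bar R\ge j_-(w)-1$ on $\{\bar X\text{ localizes}\}$, hence $|\bar R|\ge j_-(w)+1$, yielding~(i).

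The delicate point is upgrading ``positive probability of localization'' to ``almost sure''. To this end, I would invoke the quantitative second part of Proposition~\ref{modERRW}: the reachable state $\kC'$ with $\pp_{\kC'}\{\widehat{\mathcal{E}}(j_+(w),0)\}>3/4$, which via Lemma~\ref{probapositive}(ii) produces $\pp_{\kC'}\{\bar X\text{ localizes in }\lin -1,j_+(w)-1\rin\}>3/4$. A restart argument along successive ``renewal'' times of $\bar X$ (for instance returns to $-1$) should then conclude: at each such time, ellipticity of the transition mechanism guarantees a positive conditional probability that the next excursion follows the specific finite path building state $\kC'$ on top of the current configuration, and on that event the uniform quantitative bound $3/4$ delivers a positive chance of localizing. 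The main obstacle is making this iteration fully rigorous: one must first rule out that $\bar X$ drifts to $+\infty$ (so that the renewal times occur infinitely often on the non-localization event), and then handle the fact that the state $\kC_k$ at the $k$-th renewal accumulates local time, so that the individual conditional localization probabilities are a priori not uniformly bounded below. Both points seem to call for the restriction/monotonicity properties of $\widehat X$ afforded by its mixed site/edge reinforcement scheme (the ``structural independence'' discussed in the introduction), combined with a Levy-type conditional Borel--Cantelli argument summing the successive chances of localization.
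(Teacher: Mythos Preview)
Your treatment of the non-localization direction and of (ii) is correct and coincides with the paper's. There are two genuine gaps.

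\medskip
\textbf{The Borel--Cantelli step in (i) fails.} Your claim that, on $\{\min\bar R=a\ge0\}$, the sum of left-jump probabilities at $a$ diverges is not correct. At the $k$-th visit to $a$ the relevant quantity is $1/w(\bar Z_{\bar\sigma(a,k)}(a+1))$; but $\bar Z_{\bar\sigma(a,k)}(a+1)$ is the \emph{site} local time at $a+1$, which can grow arbitrarily faster than $k$ (the walk may visit $a+1$ many times from $a+2$ between two visits to $a$), so the fact that $\ell\to\infty$ does not force divergence. In fact the usual $L^2$ martingale
\[
\bar M_n(a)=\sum_{k<n}\frac{\mathbf{1}_{\{\bar X_k=a,\bar X_{k+1}=a+1\}}}{w(\bar Z_k(a+1))}-\sum_{k<n}\frac{\mathbf{1}_{\{\bar X_k=a,\bar X_{k+1}=a-1\}}}{w(\bar Z_k(a-1))}
\]
converges, and on $\{a-1\notin\bar R\}$ the second sum is finite; hence so is the first, and your conditional Borel--Cantelli sum, which is comparable to it, is \emph{finite}. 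No contradiction arises. The paper avoids this issue: from $\pp_\kC\{0<|\bar R|\le j\}>0$ it produces, via translation and a restart after the walk has entered its final range, a \emph{new} finite state $\kC'$ under which $\pp_{\kC'}\{\bar R\subset\lin-1,j-2\rin\}>0$; then $\widetilde X\prec\bar X$ under $\pp_{\kC'}$ together with $\lin-1,j_-(w)-1\rin\subset\widetilde R$ gives $j\ge j_-(w)+1$. Your second ingredient ($\max\bar R\ge j_-(w)-1$ via the coupling) is correct, but it does not suffice without the change of state.

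\medskip
\textbf{Almost-sure localization.} The paper's argument is both different from and simpler than your proposed restart-at-$-1$ scheme, and it completely sidesteps the state-accumulation obstacle you flag. Choose $M$ with $\pp_0\{\bar X\text{ never visits }M\}>0$, let $x_0$ lie to the right of the support of $\kC$, set $x_m:=x_0+Mm$ and $\tau_m:=\inf\{n:\bar X_n=x_m\}$. Conditionally on $\tau_m<\infty$, the shifted walk $(\bar X_{\tau_m+n}-x_m)_{n\ge0}$ is a reflected VRRW on $\lin -x_m-1,\infty\irin$ from a state that is \emph{zero on $\lin 0,\infty\irin$}; comparing it (via $\prec$ and Corollary~\ref{corZinfty}) with the reflected VRRW on $\lin-1,\infty\irin$ from the \emph{trivial} state yields the uniform bound
\[
\pp_\kC\{\tau_{m+1}=\infty\mid\tau_m<\infty\}\ \ge\ \pp_0\{\bar X\text{ never visits }M\}\ >\ 0,
\]
so $\pp_\kC\{\tau_m<\infty\text{ for all }m\}=0$ and $\bar X$ localizes a.s. The trick is spatial: at each step one moves to fresh territory where the accumulated local times are irrelevant, so the $3/4$ estimate of Proposition~\ref{modERRW} is not needed here.
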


\begin{proof} The combination of  Lemma
\ref{probapositive} and Proposition \ref{modERRW} implies that, with positive $\pp_{\kC}$-probability, the walk $\bar{X}$ ultimately stays confined in the interval $\lin -1,  j_+(w)-1 \rin$. In particular, (ii) holds. Let  $j\ge
1$ be such that
\begin{equation*}
\pp_\mathcal{C}\{0<|\bar{R}| \le j\}>0.
\end{equation*}
This means that we can  find a finite state $\kC'$ such that
\begin{equation*}
\pp_{\mathcal{C}'}\big\{\{-1\} \subset \bar{R}\subset \lin -1,
j-2\rin\big\}>0.
\end{equation*}
The combination of Corollary \ref{corZinfty}, Proposition
\ref{proptildeleftbar} and Proposition \ref{ERRW} implies now that
$j\ge j_-(w)+1$. Therefore (i) holds. Furthermore, the same argument
shows that, if $j_-(w)=\infty$ then necessarily $j=\infty$ which
means that the walk does not localize. Hence, we have shown that
\begin{equation*}
 j_{\pm}(w)<\infty \quad \Longleftrightarrow \quad \bar{X} \mbox{
localizes with positive probability.}
\end{equation*}

It remains to prove that localization is, in fact, an almost sure property.
 Assume that $j_\pm(w) < \infty$ and pick  $M\ge 0$ large enough such that, starting from the trivial environment, the reflected VRRW never visits $M$ with positive probability. Given the finite state $\kC$, we choose  $x_0\ge -1$ such that all the local times of $\kC$ are zero on $\lin x_0,+\infty\irin$. Furthermore,  for $m\ge 1$, set $x_m:=M m +x_0$ and
\begin{equation*}
\tau_m:=\inf\{n\ge 0\; : \; \bar{X}_n=x_m\}.
\end{equation*}
Conditionally on $\tau_m < \infty$, the process $(\bar{X}_{\tau_m + n} - x_m)_{n\geq 0}$ is a reflected VRRW on $\lin -x_m-1,\infty\irin$ starting from a (random) finite initial state whose local times are zero for $x\geq0$. Comparing this walk with the reflected VRRW $\bar{X}$ on $\lin
-1,\infty \irin$ starting from the trivial state, it follows from Corollary \ref{corZinfty} that
$$\pp_\mathcal{C}\{\tau_{m+1}=\infty\, | \;\tau_m<\infty\}\ge \pp_0\{\bar{X} \mbox{ never visits } M\}>0,$$
which proves that $\bar{X}$ localizes a.s.
\end{proof}

 The following technical lemma will be
useful later to show that the non-reflected VRRW  localizes with
positive probability on a set of cardinality at least $2j_-(w) - 1$.

\begin{lem}\label{barXtech}
 Assume that $j_+(w)<\infty$. Then, there exists a reachable initial state $\mathcal{C}$
which is symmetric \emph{i.e.} satisfying $z(x)= z(-x)$ and
$n(x,x+1)=n(-x-x,-x)$ for all $x\ge 0$, such that
 \begin{equation*}
 \pp_{\mathcal{C}}\left\{\{\bar{R}\subset \lin -1,  j_+(w)-1 \rin\}\cap\Big\{\limsup_{n\to \infty} \frac{\bar{Z}_{\bar{\sigma}(0,n)}(1)}{\bar{Z}_{\bar{\sigma}(0,n)}(-1)}\le 1\Big\}\right\}>3/4,
 \end{equation*}
recalling the notation $\bar{\sigma}(0,n):=\inf\{k\ge 0\; :\; \bar{Z}_k(0)=n\}.$
\end{lem}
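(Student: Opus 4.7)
The plan is to start from a reachable state $\mathcal{C}'$ produced by Proposition~\ref{modERRW}\eqref{c2}, establish the lemma's event for that state by combining the coupling of Lemma~\ref{probapositive}(i) with the martingale argument used in the proof of Proposition~\ref{ERRW}, and then symmetrize by mirror concatenation of an excursion creating $\mathcal{C}'$. By Proposition~\ref{modERRW}\eqref{c2}, choose a reachable $\mathcal{C}'$ supported in $\lin-1,j_+(w)\rin$ with $n'(0,1)\ge n'(-1,0)$ such that $\pp_{\mathcal{C}'}\{\widehat{\mathcal{E}}(j_+(w),0)\}>3/4$. By Lemma~\ref{probapositive}(i), on $\widehat{\mathcal{E}}(j_+(w),0)$ the coupling $\bar X\prec\widehat X$ holds and $\widehat X$ never visits $j_+(w)$, so $\bar R\subset\lin-1,j_+(w)-1\rin$ with probability $>3/4$ under $\pp_{\mathcal{C}'}$.

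For the ratio condition I recycle the martingale argument from the proof of Proposition~\ref{ERRW}: the process
$$\bar M_n(0):=\sum_{k<n}\frac{\mathbf{1}_{\{\bar X_k=0,\,\bar X_{k+1}=1\}}}{w(\bar Z_k(1))}-\sum_{k<n}\frac{\mathbf{1}_{\{\bar X_k=0,\,\bar X_{k+1}=-1\}}}{w(\bar Z_k(-1))}$$
is an $L^2$-bounded martingale that converges a.s., and $\bar M_n(0)\equiv W(\bar Z_n(1))-W(\bar Z_n(-1))$. Assuming $j_+(w)\ge 2$ (otherwise the statement is vacuous), site $0$ is visited infinitely often on the localization event, so Lemma~\ref{limborne}\eqref{limborne_eq3} forces $\bar Z_n(1)/\bar Z_n(-1)\to 1$, and hence $\limsup_n \bar Z_{\bar\sigma(0,n)}(1)/\bar Z_{\bar\sigma(0,n)}(-1)=1$. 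Writing $H$ for the event of the lemma, this yields $\pp_{\mathcal{C}'}(H)>3/4$.

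For symmetry, since $\mathcal{C}'$ is reachable it arises as the state of some nearest-neighbour excursion $\gamma$ from $0$ to $0$ staying in $\lin-1,j_+(w)\rin$. Setting $\gamma^{(-)}(t):=-\gamma(t)$ and $\tilde\gamma:=\gamma^{(-)}\cdot\gamma$, a direct bookkeeping using the balance of forward and backward edge crossings of $\gamma$ shows that the state $\mathcal{C}$ created by $\tilde\gamma$ is reachable and symmetric, with $z_\mathcal{C}(\pm x)=z'(x)+z'(-x)$, $z_\mathcal{C}(0)=2z'(0)$, $n_\mathcal{C}(x,x+1)=n'(x,x+1)+n'(-x-1,-x)$, and in particular $n_\mathcal{C}(-1,0)=n_\mathcal{C}(0,1)$ as forced by symmetry.

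The main obstacle is to transfer $\pp_{\mathcal{C}'}(H)>3/4$ into $\pp_\mathcal{C}(H)>3/4$, since the law of $\bar X$ depends only on $\mathcal{C}|_{\lin-1,\infty\irin}$, which differs from $\mathcal{C}'|_{\lin-1,\infty\irin}$ by the addition of symmetric mass at the three central sites $\{-1,0,1\}$ and the balanced edge condition $n_\mathcal{C}(-1,0)=n_\mathcal{C}(0,1)$. To resolve this I would reinspect the proof of Proposition~\ref{modERRW} with $\mathcal{C}$ as initial state: hypothesis~\eqref{inikc} is still satisfied (possibly after shrinking $\varepsilon$), and Lemma~\ref{lemhatXK} ensures $\widehat N^L_m(-1,0)\le \widehat N^L_m(0,1)$ holds eventually regardless of the balanced start, so $\widehat{\mathcal{E}}(j_+(w),M)$ has positive probability for a suitable $M$ and yields localization of $\bar X$ via Lemma~\ref{probapositive}(ii). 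The boost from positive probability to the quantitative $>3/4$ bound (while preserving symmetry) is then obtained by applying the upward convergence theorem for conditional expectations along an appropriate sequence of return times of $\bar X$ to $0$ at which the walk's state is close to balanced, followed by a further mirror-concatenation step. Combined with the martingale ratio argument, this gives $\pp_\mathcal{C}(H)>3/4$.
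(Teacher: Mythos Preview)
Your martingale argument for the ratio condition has a genuine gap. You claim
\[
\bar M_n(0)\equiv W(\bar Z_n(1))-W(\bar Z_n(-1)),
\]
but this identity fails for the VRRW $\bar X$. In the proof of Proposition~\ref{ERRW} the analogous identity works because the weight on the right is $w(\widetilde N_k(0,1))$, an \emph{edge} local time that increments by exactly one at each jump $0\to 1$. For $\bar X$ the weight is $w(\bar Z_k(1))$, a \emph{site} local time, and site $1$ is also reached from site $2$; hence the values $\bar Z_{t_i}(1)$ at successive jump times $t_i$ from $0$ to $1$ are not consecutive integers. The positive sum in $\bar M_n(0)$ is therefore only $\lesssim W(\bar Z_n(1))$, with a discrepancy equal (up to $\equiv$) to $\bar Y_n^-(2)$, which is unbounded in general. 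Working this out carefully yields $\limsup \bar Z_n(-1)/\bar Z_n(1)\le 1$, the \emph{opposite} of the inequality the lemma requires.

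The paper obtains the correct direction by exploiting the coupling $\bar X\prec\widehat X$ on $\widehat{\mathcal{E}}(j_+(w),0)$: from Lemma~\ref{propEERW} one has $\bar Z_{\bar\sigma(0,n)}(1)\le \widehat Z_{\widehat\sigma(0,n)}(1)$ and $\bar Z_{\bar\sigma(0,n)}(-1)\ge \widehat Z_{\widehat\sigma(0,n)}(-1)$, and for $\widehat X$ the limit $\widehat Z_n(1)/\widehat Z_n(-1)\to 1$ is already established in \eqref{eqlim-11} via the $W_f$ martingale. This immediately gives the $\limsup\le 1$ for $\bar X$ without any direct martingale computation at site $0$ of $\bar X$.

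The symmetrization step is also handled differently, and more simply, in the paper. Rather than concatenating $\gamma^{(-)}\cdot\gamma$ (which perturbs the state at \emph{every} site and forces you back into a vague ``re-run Proposition~\ref{modERRW} then boost and re-symmetrize'' loop), the paper simply mirrors the restriction of $\mathcal{C}'$ to $\lin 0,\infty\irin$ onto the left. Because $n'(0,1)\ge n'(-1,0)$, the resulting symmetric state $\mathcal{C}$ agrees with $\mathcal{C}'$ for $x\ge 1$ and only \emph{increases} $z$ at $-1$ and $0$. One then defines a walk $\check X$ under $\pp_{\mathcal{C}'}$ having the law of $\bar X$ under $\pp_{\mathcal{C}}$ and checks directly that $\check X\prec\bar X$; the coupling inequalities of Lemma~\ref{propEERW} transfer both the localization event and the ratio bound in one stroke, preserving the $>3/4$ bound without any further iteration.
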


\begin{proof} Since we are dealing with the reflected random walk $\bar{X}$, the value of the state on $\ilin-\infty,-2\rin$ is irrelevant so the symmetric assumption is not really restrictive apart from the edge/site local times at $-1$ and $1$. Moreover, according to the previous proposition and the fact that $j_+(w)\le j_-(w)+1$, it follows that, on the event $\{\bar{R}\subset \lin -1,  j_+(w)-1 \rin\}$, the walk $\bar{X}$
returns to $0$ infinitely often. Hence all the hitting times $\bar{\sigma}(0,n)$ are finite. In particular, the $\limsup$ in the proposition is well-defined.

According to Proposition \ref{modERRW}, there exists a reachable state $\mathcal{C}'=(z'(x),n'(x,x+1))_{x\in \Z}$ which is  zero outside of the interval $\lin -1, j_+(w) \rin$ such that
$n'(0,1)\ge n'(-1,0)$  and for which  \eqref{c2} holds, namely
\begin{equation*}\pp_{\mathcal{C}'}\{\widehat{\mathcal{E}}(j_+(w),0)\}>3/4.
\end{equation*}
Recall that  $\widehat{\mathcal{E}}$ is the "good event" for the
modified reinforced walk $\widehat{X}$ defined by \eqref{defEKN}. On
$\widehat{\mathcal{E}}(j_+(w),0)$, by definition, we have
$\widehat{Z}_n(1)\le \widehat{N}_n(0,1)+f(\widehat{N}_n(0,1))$.
Recalling that $f(x)=o(x)$ (\emph{c.f.} (b) of Lemma \ref{WL}), we
get $\widehat{Z}_n(1)\sim \widehat{N}_n(0,1)$. Moreover, on this
event, the walk $\widehat{X}$ coincides with the reflected walk
$\widehat{X}^{\scriptscriptstyle{\! j_+(w)}}$ on $\lin -1,
j_+(w)\rin$. In particular, it follows from \eqref{eqlim-11} that
\begin{equation}\label{stablelimit}
\lim_{n \rightarrow \infty}
\frac{\widehat{Z}_n(1)}{\widehat{Z}_n(-1)}=1 \qquad
\pp_{\mathcal{C}'}\mbox{-a.s. on the event
$\widehat{\mathcal{E}}(j_+(w),0)$. }
\end{equation}
Since $\bar{X} \prec \widehat{X}$ on $\widehat{\mathcal{E}}(j_+(w),0)$, Lemma \ref{probapositive} combined with
\eqref{stablelimit} and Proposition \ref{ERRW} yield
\begin{equation}\label{eqa}
\widehat{\mathcal{E}}(j_+(w),0)\subset\left\{\{\bar{R}\subset \lin
-1, j_+(w)-1 \rin\} \cap\Big\{\limsup_{n\to \infty}
\frac{\bar{Z}_{\sigma(0,n)}(1)}{\bar{Z}_{\sigma(0,n)}(-1)}\le
1\Big\}\right\}.
\end{equation}
Consider now the reachable state $\mathcal{C}=(z(x),n(x,x+1),x\in \Z)$ obtained by symmetrizing $\kC'$ \emph{i.e.}
\begin{eqnarray*}
n(x,x+1)&=&\left\{ \begin{array}{ll}  n'(x,x+1) & \mbox{ if }x\ge 0 \\
n'(-x-1,-x) & \mbox{ if }x< 0  \\
\end{array}\right. \\
z(x)&=& n(x,x+1) + n(x-1,x).
\end{eqnarray*}
With this definition, we have $z(x)=z'(x)$ for $x\ge 1$ (recall that $\kC'$ is reachable) and since
$n'(0,1)\ge n'(-1,0)$, we also have $z(0)\ge z'(0)$ and $z(-1)\ge z'(-1)$. Now set $v(x):=z(x)-z'(x)$ for $x\geq -1$.
Defining a reflected walk $\check{X}$ on $\lin-1,\infty\irin$ with transition probabilities given for $x\geq 0$ by
$$\pp_\mathcal{C'}\{\check{X}_{n+1}=x-1\; |\; \mathcal{\check{F}}_n,\check{X}_n=x\}=
\frac{w(\check{Z}_n(x-1)+v(x-1))}{w(\check{Z}_n(x-1)+v(x-1))+w(\check{Z}_n(x+1))},
$$
it is clear that $\check{X}$ under $\pp_{\mathcal{C}'}$ has the same law as $\bar{X}$ under
$\pp_\mathcal{C}$. Besides, using  $v(-1),v(0) \geq 0$ and $v(x) = 0$ for $x\geq 1$, it follows that $\check{X}\prec \bar{X}$ under $\pp_{\mathcal{C}'}$ (just compare the transition probabilities). Using Lemma \ref{propEERW}, Corollary \ref{corZinfty} and
\eqref{eqa}, we conclude that
 \begin{multline*}
 \pp_{\mathcal{C}}\left\{\{\bar{R}\subset \lin -1,  j_+(w)-1 \rin\}\cap\Big\{\limsup_{n\to \infty} \frac{\bar{Z}_{\bar{\sigma}(0,n)}(1)}{\bar{Z}_{\bar{\sigma}(0,n)}(-1)}\le 1\Big\}\right\}\\
 \begin{aligned}
& =
 \pp_{\mathcal{C}'}\left\{\{\check{R}\subset \lin -1,  j_+(w)-1 \rin\}\cap\Big\{\limsup_{n\to \infty} \frac{\check{Z}_{\check{\sigma}(0,n)}(1)}{\check{Z}_{\check{\sigma}(0,n)}(-1)}\le
 1\Big\} \right\}\\
&\ge   \pp_{\mathcal{C}'}\left\{\{\bar{R}\subset \lin -1,  j_+(w)-1
\rin\}\cap\Big\{\limsup_{n\to \infty}
\frac{\bar{Z}_{\bar{\sigma}(0,n)}(1)}{\bar{Z}_{\bar{\sigma}(0,n)}(-1)}\le
1\Big\}\right\}\\
 &\ge
 \pp_{\mathcal{C}'}\{\widehat{\mathcal{E}}(j_+(w),0)\}>3/4.
\end{aligned}
 \end{multline*}
\end{proof}

\section{The VRRW $X$: proof of Theorem \ref{locps}}\label{sectionmaintheo}

We now have all the ingredients needed to prove Theorem \ref{locps} whose statement is rewritten below (recall that $i_\pm(w) = j_\pm(w) -1$ according to Proposition \ref{prop_iegalj}).
\begin{theo} Let $X$ be a  VRRW on $\Z$ with weight $w$ satisfying Assumption
\ref{assumw}. We have
\begin{equation}\label{eqla}
j_{\pm}(w)<\infty \Longleftrightarrow\   X \mbox{ localizes with positive
probability} \Longleftrightarrow\   X \mbox{ localizes a.s.}
\end{equation}
Moreover, when localization
occurs (\emph{i.e.} $j_\pm(w)<\infty$) we have
\begin{eqnarray}
\label{minsize}&(i)&\pp_0\{ j_-(w) <|R| < \infty \}=1\\
\label{goodsize}&(ii)&\pp_0\big\{ 2j_-(w)-1 \le |R|\le 2j_+(w)-1 \big\}>0.
\end{eqnarray}
\end{theo}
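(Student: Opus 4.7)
The plan is to reduce Theorem~\ref{locps} to the analogous statement for the reflected walk $\bar X$ (Proposition~\ref{locabarX}) together with the balanced-boundary construction (Lemma~\ref{barXtech}), via three steps: (a) extract a reflected-walk localization from a localization of $X$; (b) build a localization of $X$ by symmetrically gluing two reflected localizations at the origin; (c) promote positive-probability localization to almost-sure localization.

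For (a), which yields both the implication ``$X$ localizes w.p.p.~$\Rightarrow j_\pm(w)<\infty$'' and the bound \eqref{minsize}, suppose $X$ localizes on $\lin a,b\rin$ with positive $\pp_0$-probability. By running the walk until it realizes a suitable finite reachable state $\kC$, it suffices to argue under $\pp_\kC$ that $X$ stays in $\lin a,b\rin$ with positive probability. Construct on the same probability space the reflected walk $\bar X$ at $a-1$ starting from $\kC$: the two walks have identical transitions away from $a-1$, so $X$ and $\bar X$ coincide until the first visit of $X$ to $a-1$. On the positive-probability event that $X$ stays in $\lin a,b\rin$ forever, $X$ never touches $a-1$, so $\bar X$ also stays in $\lin a,b\rin$. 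Hence $\bar X$ localizes with positive probability, and Proposition~\ref{locabarX} gives $j_\pm(w)<\infty$ together with $|\bar R|>j_-(w)$, from which $|R|>j_-(w)$ follows.

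For (b), assume $j_+(w)<\infty$ and invoke Lemma~\ref{barXtech} to fix a symmetric reachable state $\kC$ with $\pp_\kC(E^+)>3/4$, where $E^+$ is the event that the reflected walk $\bar X^+$ at $-1$ is confined to $\lin -1,j_+(w)-1\rin$ with $\limsup_n \bar Z^+_{\bar\sigma(0,n)}(1)/\bar Z^+_{\bar\sigma(0,n)}(-1)\le 1$. By the symmetry of $\kC$ about $0$, the mirrored walk $\bar X^-$ reflected at $+1$ satisfies an analogous event $E^-$ with the same probability, so $\pp_\kC(E^+\cap E^-)>1/2$. The main technical obstacle is to construct a coupling of $X$, $\bar X^+$ and $\bar X^-$ on the shared probability space so that on $E^+\cap E^-$ the non-reflected walk $X$ stays in $\lin -j_+(w)+1,j_+(w)-1\rin$. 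The natural strategy is an excursion decomposition of $X$ at the origin: couple each right excursion of $X$ to one of $\bar X^+$ and each left excursion to one of $\bar X^-$ in the sense of Lemma~\ref{propEERW}, using the asymptotic balance at $\pm 1$ to guarantee that the two dominations remain compatible across successive returns to $0$ (this is the reason the balance condition was extracted in Lemma~\ref{barXtech}). Since $\kC$ is reachable from the trivial state, this yields $\pp_0\{R\subset \lin -j_+(w)+1,j_+(w)-1\rin\}>0$; applying step (a) separately to the right and left halves of the resulting symmetric trajectory (each of which contains at least $j_-(w)+1$ sites in its corresponding half-interval by Proposition~\ref{locabarX}(i)) produces $|R|\ge 2(j_-(w)-1)+1=2j_-(w)-1$, thereby completing \eqref{goodsize}.

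For (c), set $p_0:=\pp_0\{X\text{ localizes}\}>0$ by~(b). If $X$ did not localize almost surely, it would visit sites arbitrarily far from the origin; at each such fresh visit to $\pm Mk$ (for $M$ large), the future of $X$ conditional on the past is dominated, via the coupling framework of Corollary~\ref{corZinfty}, by a VRRW starting from an essentially trivial configuration around its current position, and therefore has conditional probability at least $p_0$ of localizing in a bounded window disjoint from previously visited regions. A conditional Borel--Cantelli argument over the successive visits to fresh sites then forces localization almost surely, establishing the remaining equivalence in~\eqref{eqla}.
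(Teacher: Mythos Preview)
Your step (a) is correct and matches the paper. However, steps (b) and (c) contain a genuine gap and an unnecessary complication, respectively.

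\medskip
\textbf{The upper bound and a.s.\ localization are much simpler than you suggest.} One checks directly from the transition probabilities that $X\prec\bar X$ under $\pp_\kC$ for any $\kC$ (the reflected walk always jumps right from $-1$, so \eqref{eqcouplage} holds). Hence, when $j_\pm(w)<\infty$, Proposition~\ref{locabarX} gives $\bar X$ localizes a.s., and Corollary~\ref{corZinfty} yields $\sup_n X_n\le\sup_n\bar X_n<\infty$ a.s.; by symmetry $X$ localizes a.s. This replaces your step (c) entirely; your Borel--Cantelli argument is doubtful as written, since the future of $X$ after a fresh visit to $Mk$ is \emph{not} dominated by a VRRW from a trivial state (the accumulated local times to the left affect the transitions). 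Likewise, for the upper bound in (b) no excursion coupling is needed: from $X\prec\bar X$, Lemma~\ref{barXtech} and Corollary~\ref{corZinfty} one directly gets $\pp_\kC\{R\subset\lin-\infty,j_+(w)-1\rin,\ \limsup Z_{\sigma(0,n)}(1)/Z_{\sigma(0,n)}(-1)\le1\}>3/4$, and by symmetry of $\kC$ the mirrored statement; intersecting gives the confinement $R\subset\lin-j_+(w)+1,j_+(w)-1\rin$ together with $\lim Z_{\sigma(0,n)}(1)/Z_{\sigma(0,n)}(-1)=1$, with probability $>1/2$.

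\medskip
\textbf{The real gap is the lower bound $|R|\ge 2j_-(w)-1$.} Your proposal ``apply step (a) separately to the right and left halves'' does not work: the right-excursion process $X^+$ is \emph{not} a reflected VRRW in the sense of Proposition~\ref{locabarX}, because its transition at $0$ depends on $Z_n(-1)$, which is produced by the left excursions and evolves in a non-autonomous way. Proposition~\ref{locabarX}(i) therefore does not apply to $X^+$. The paper bridges this by introducing an auxiliary walk $\breve X$ on $\lin0,\infty\irin$ with the transitions of $\widetilde X$ for $x\ge1$ but a \emph{constant} right-jump probability $\gamma$ at $0$. On the balanced event $\mathcal G_\gamma=\{\gamma\le Z_{\sigma(0,n)}(1)/(Z_{\sigma(0,n)}(-1)+Z_{\sigma(0,n)}(1))\le1-\gamma\ \forall n\}$ one checks $\breve X\prec X^+$ via Lemma~\ref{propEERW}, and the analysis of Proposition~\ref{ERRW} carried out for $\breve X$ (with $\gamma$ playing the role of $1/2$) shows $\lin0,j_-(w)-1\rin\subset\breve R$ for $\gamma$ close enough to $1/2$. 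Corollary~\ref{corZinfty} then forces $X$ to visit $j_-(w)-1$ i.o.\ on $\mathcal G_\gamma$; symmetry gives the same for $-(j_-(w)-1)$. This auxiliary walk is the missing idea in your outline.
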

\begin{proof}
It follows directly from the definition of the VRRW and its reflected counterpart that $X \prec \bar{X}$. On the other hand, when $j_{\pm}(w)<\infty$, Proposition \ref{locabarX} states that $\bar{X}$ localizes a.s which, in  view of Corollary \ref{corZinfty}, implies $\sup_{n}X_n \leq \sup_n\bar{X}_n < \infty$ a.s. By symmetry, we conclude that $X$ localizes a.s. Reciprocally, if $X$ localizes with positive probability then there exists a finite state $\mathcal{C}$ such that
$$\pp_{\mathcal{C}}\{X \mbox{ localizes and never  visits site -1}\}>0.$$
On this event, $\bar{X}$ coincides with $X$, thus
$\pp_{\mathcal{C}}\{\bar{X} \mbox{ localizes}\}>0$. Proposition \ref{locabarX} now implies that $j_{\pm}(w)<\infty$ which concludes the proof of \eqref{eqla}.

We now prove \eqref{minsize}. Assume $j_\pm(w)<\infty$ so that $R$ is finite and not empty. Suppose by contradiction that $\pp_0\{1\le |R| \le j_-(w) \}>0$. Then, there exists a finite state $\mathcal{C}$  such that $$\pp_\mathcal{C} \{ X \mbox{ never exits the interval } \lin -1,j_-(w)-2\rin \}>0.$$
On this event, the walks $X$ and $\bar{X}$ coincide. In particular, we get $\pp_{\mathcal{C}}\{ |\bar{R}| \le j_-(w) \}>0$ which contradicts Proposition \ref{locabarX}.

It remains to establish \eqref{goodsize}. According to Lemma \ref{barXtech}, we can find a symmetric reachable initial state $\mathcal{C}$ such that
 $$\pp_{\mathcal{C}}\left\{\{\bar{R}\subset \lin -1,  j_+(w)-1 \rin\}\cap\{\limsup_{n\to \infty} \frac{\bar{Z}_{\bar{\sigma}(0,n)}(1)}{\bar{Z}_{\bar{\sigma}(0,n)}(-1)}\le 1\}\right\}>3/4.$$
Using again $X\prec \bar{X}$ together with  Proposition \ref{propEERW} and Corollary \ref{corZinfty}, we get
 $$\pp_{\mathcal{C}}\left\{\{R\subset \lin -\infty,  j_+(w)-1 \rin\}
 \cap\Big\{\{\limsup_{n\to \infty}  \frac{Z_{\sigma(0,n)}(1)}{Z_{\sigma(0,n)}(-1)}\le 1\}\cup \{Z_\infty(0)<\infty\}\Big\}\right\}>3/4.$$
The state $\mathcal{C}$ being symmetric, we also have
 $$\pp_{\mathcal{C}}\left\{\{R\subset \lin -j_+(w)+1,\infty \irin\}
 \cap\Big\{\{\limsup_{n\to \infty}  \frac{Z_{\sigma(0,n)}(-1)}{Z_{\sigma(0,n)}(1)}\le 1\}\cup \{Z_\infty(0)<\infty\}\Big\}\right\}>3/4.$$
Hence
\begin{equation}\label{eqdemi}
\pp_{\mathcal{C}}\left\{\{R\subset \lin -j_+(w)+1, j_+(w)-1  \rin\}
\cap\{\lim_{n\to \infty}
\frac{Z_{\sigma(0,n)}(-1)}{Z_{\sigma(0,n)}(1)}= 1\}\right\}>1/2,
\end{equation}
where we used that, on the event $\{R\subset \lin -j_+(w)+1, j_+(w)-1
\rin\} $, the walk $X$ visits the origin infinitely often since it cannot localize on less than
$j_-(w) + 1 \geq j_+(w)$ sites. The state $\kC$ being reachable, we already deduce that
\begin{equation*}
 \pp_0\{  1\le |R| \le 2j_+(w)-1 \}>0.
\end{equation*}
Next, for $\gamma\in (0,1/2)$, define
\begin{equation*}
\mathcal{G}_\gamma:=\Big\{R\subset \lin -j_+(w)+1, j_+(w)-1
\rin\Big\}
 \cap\Big\{\forall n\ge 0,\; \gamma\le\frac{Z_{\sigma(0,n)}(1)}{ Z_{\sigma(0,n)}(-1)+Z_{\sigma(0,n)}(1)}\le 1-\gamma\Big
 \}.
 \end{equation*}
According to \eqref{eqdemi}, for any given $\gamma$,
 there exists a reachable configuration $\mathcal{C}'$ such that
$\pp_{\mathcal{C}'}\{\mathcal{G}_\gamma\}>0$. Thus, it suffices to prove that, for $\gamma$ close enough to $1/2$, we have
\begin{equation}\label{eqfin}
\mathcal{G}_\gamma\subset \{ 2j_-(w)-1\le |R| \le 2j_+(w)-1\}\quad\hbox{$\pp_{\kC'}$-a.s.}
 \end{equation}
To this end, we introduce the walk $\breve{X}$ on $\lin 0,\infty \irin$ with the same transition
probabilities as the walk $\widetilde{X}$  studied in Section
\ref{sectiontilde} except at site $x=0$ where we define
$$\pp_{\kC'}\{\breve{X}_{n+1}=1 \;| \;\breve{\mathcal{F}}_n, \breve{X}_n=0\}=1-\pp_{\kC'}\{\breve{X}_{n+1}=0 \;|\;
\breve{\mathcal{F}}_n, \breve{X}_n=0\}=\gamma$$ (\emph{i.e.} when
this walk visits 0, it has a positive probability of staying at the origin at the next step). Using exactly the same arguments as in Proposition \ref{ERRW}, we see that $\breve{X}$ localizes a.s. under $\pp_{\kC'}$ and that the bounds \eqref{hyprec} obtained for $\widetilde{X}$ give similar estimates for $\breve{X}$: for
 $j\ge 1$  and  $\alpha\in (0,\gamma)$,
\begin{equation*}\Phi_{\gamma-\alpha,j}(\breve{Z}_k(j-1))
\lesssim \breve{N}_k(j-1,j)
 \lesssim \Phi_{\gamma+\alpha,j}(\breve{Z}_k(j-1)) \qquad\pp_{\kC'}\mbox{-a.s.}
 \end{equation*}
Thus, we can now choose $\gamma$ close enough to $1/2$ such that, $j_{\gamma-\alpha}(w)=j_-(w)$ for some $\alpha>0$.
The previous estimate implies, by induction, that the localization set of $\breve{X}$ is such that
\begin{equation}\label{eqbreve}
\lin 0,j_-(w)-1\rin \subset  \breve{R}\qquad \pp_{\kC'}\mbox{-a.s.}
\end{equation}
Finally, consider the walk $X^+$ on $\lin 0,\infty \irin$ obtained from $X$ by keeping only its excursions on the half-line $\lin 0,+\infty\rin$ \emph{i.e.}
\begin{equation*}
X^+_n := X_{\zeta_n},
\end{equation*}
where $\zeta_0 := 0$ and $\zeta_{n+1} := \inf\{k>\zeta_n\; : \; X_k\geq 0\}$. On the event
$\mathcal{G}_\gamma$, the r.v. $\zeta_n$ are finite. Recalling the construction described in Section \ref{seccouplage} of the VRRW $X$ from a sequence $(U_i^x,x \in \Z,i\ge 1)$ of i.i.d. uniform
random variables, we see that, on $\mathcal{G}_\gamma$  we  have
$$
U_n^0\ge 1-\gamma \quad\Longrightarrow\quad X^+_{\sigma^+(0,n)+1}=X^+_{\sigma^+(0,n)}+1=1\qquad\hbox{(for $n$ larger than the initial local
time at $0$)}.
$$
We also construct $\breve{X}$ from the same random variables
$(U_i^x)$ (the walk is not nearest neighbour at $0$  so we set
$\breve{X}_{\breve{\sigma}(0,n)+1}=1$ if $U_n^0\ge 1-\gamma$ and
$\breve{X}_{\breve{\sigma}(0,n)+1}=0$ otherwise). Then, it follows
from the previous remark that $\breve{X}\prec X^+$ on
$\mathcal{G}_\gamma$. Using one last time Corollary \ref{corZinfty}
and \eqref{eqbreve}, we deduce that
$$\mathcal{G}_\gamma \subset  \{X \mbox{ visits $j_-(w)-1$ i.o.}\}\qquad\hbox{$\pp_{\kC'}$-a.s.}$$
By invariance of the event $\mathcal{G}_\gamma$ under the space reversal $x \mapsto -x$, we conclude that
$$\mathcal{G}_\gamma\subset \{X \mbox{ visits $j_-(w)-1$ and $-(j_-(w)-1)$ i.o.}\}\qquad\hbox{$\pp_{\kC'}$-a.s.}$$
hence \eqref{eqfin} holds.
\end{proof}

\section{Asymptotic local time profile}\label{sectionlocaltime}

Although Theorem \ref{locps} is only concerned with the size of the
localization set, looking back at the proof, we see that we can also
describe, with little additional work, an asymptotic local time
profile of the walk (but we cannot prove that other asymptotics do
not happen). Let us give a rough idea of how to proceed while
leaving out the cumbersome details. In order to simplify the
discussion, assume that $i_\pm(w)$ are finite and that both indexes
are equal. Hence, the VRRW $X$ localizes with positive probability
on the interval $\lin -i_\pm(w),i_\pm(w)\rin$. Looking at the proof
of \eqref{goodsize}, we see that, with positive probability, the urn
at the center of the interval is balanced, \emph{i.e.}
\begin{equation}\label{stabcenter}
Z_n(-1)\sim Z_n(1)\sim \frac{Z_n(0)}{2}.
\end{equation}
This tells us that, with positive probability, as $n$ tends to
infinity, the local times $Z_n(1), Z_n(2),\ldots,Z_n(i_\pm)$ and
$Z_n(-1), Z_n(-2),\ldots,Z_n(-i_\pm)$ are of the same magnitude as
$\bar{Z}_n(1),\bar{Z}_n(2),\ldots,\bar{Z}_n(i_\pm)$ for the
reflected random walk $\bar{X}$ on $\lin-1,\infty\irin$.
Furthermore, recalling that $\widetilde{X} \prec \bar{X} \prec
\widehat{X}$ on $\widehat{\mathcal{E}}(i_\pm(w)+1,0)$, we can use
\eqref{hyprec} and \eqref{hyprec2} to estimate the local times of
$\bar{X}$, which therefore also provides asymptotic for the local
times of the non-reflected walk $X$. More precisely, given  a family
of functions $(\chi_\eta(x),\eta \in (0,1))$, introduce the notation
$$f(x)\asymp \chi_{\eta_0}(x) \quad\mbox{ if  }\quad
\chi_{\eta_0-\varepsilon}(x)\ \le \ f(x)\ \le \
\chi_{\eta_0+\varepsilon}(x)\quad \mbox{ for all $\varepsilon>0$ and
$x$ large enough.} $$
 Then, one can prove that, with positive probability, the VRRW localizes on $\lin
-i_\pm(w),i_\pm(w)\rin$ in such a way that \eqref{stabcenter} holds
and that, for every $1\leq i\leq i_\pm(w)$,
 $$\left\{\begin{array}{c} Z_n(i)\asymp \Phi_{1/2,i}(Z_n(i-1))\\
  Z_n(-i)\asymp \Phi_{1/2,i}(Z_n(-i+1)^{\vphantom{X^{X^X}}})\end{array}\right.\quad\hbox{ as $n$ goes to
infinity,}$$ where $(\Phi_{\eta,i}, \eta\in(0,1))$ is the family of
functions defined in \eqref{defphi}. Recalling that
$\Phi_{\eta,i}(x)=o(x)$ for any $i\ge 2$, we deduce in particular
$$Z_n(-1)\sim Z_n(1)\sim \frac{Z_n(0)}{2}\sim \frac{n}{4},$$
\emph{i.e.} the walk spends almost all its time on the three center
sites $\{-1,0,1\}$. Furthermore, setting
\begin{equation}
\Psi_{\eta,i}(x):= \Phi_{1/2,i}\circ \Phi_{1/2,i-1} \circ \ldots
\circ \Phi_{\eta,1}(x/2),
\end{equation}
 we
 get, for any $i\in \lin 1, i_\pm(w) \rin$
\begin{equation}\label{tl}\left\{\begin{array}{c} Z_{n}(i)\asymp \Psi_{1/2,i}(n)\\
  Z_{n}(-i)\asymp\Psi_{1/2,i}(n)^{\vphantom{X^{X^X}}}\end{array}\right.\quad\hbox{ as $n$ goes to
infinity,}
\end{equation}
(\emph{c.f.} Figure \ref{fig1}). The calculation of this family of
functions may be carried out explicitly in some cases. For example,
if we consider a weight sequence of the form $w(k)\sim k
\exp(-\log^\alpha k)$ for some $\alpha \in (0,1)$, then, with
arguments similar as those used in the proof of Proposition
\ref{calculi}, we can estimate the functions $\Psi_{\eta,i}(n)$ and,
after a few  lines of calculus, we conclude that, in this case,
$$Z_n(i) \; = \; \frac{n}{\exp( (\log n)^{(1-\alpha)(i-1)+ o(1)})}\qquad \mbox{for  $i\in \lin 1, i_\pm(w) \rin$}.$$

\begin{figure}[!top]
\begin{center}
\includegraphics[width=14cm]{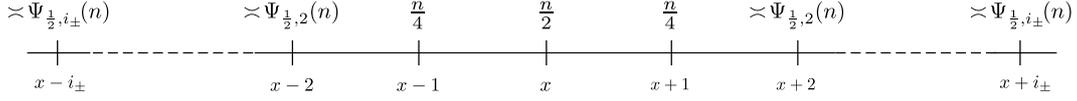}
\end{center}
\caption{Local time profile at time $n$.} \label{fig1}
\end{figure}

\section{Appendix: proof of Proposition \ref{theoic3}}\label{sectionProp}
The proof of Proposition \ref{theoic3} is largely independent of the
rest of the paper and uses arguments similar to those developed in
\cite{T1,T2} and then in \cite{BSS}. First, let us remark that the first part of the
proposition is a direct consequence of Theorem $1.1$ of \cite{BSS}.
Thus, we just prove (ii). Assume that localization on $5$ sites
occurs with positive probability and let us prove that necessarily
$i_-(w) = 2$. From now on, let $\bar{X}$ denote the VRRW restricted
to $\lin 0, 4 \rin$ (\emph{i.e.} reflected at sites $0$ and $4$).
Then, Lemma 3.7 of \cite{BSS} insures that there exists some initial
state $\mathcal{C}$ such that $\pp_{\mathcal{C}}\{ \mathcal{H}
\}>0$, where the event $\mathcal{H}$ is defined by
$$\mathcal{H}:=\{\lim_{n\to \infty}\bar{Y}_n^+(0)<\infty\}\cap \{\lim_{n\to \infty}\bar{Y}_n^-(4)<\infty\} $$
with
\begin{equation*}\label{defY}
\bar{Y}_n^{\pm}(x) := \sum_{k=0}^{n-1}
\frac{\mathbf{1}_{\{\bar{X}_k=x\textrm{ and }\bar{X}_{k+1}=x\pm
1\}}}{w(\bar{Z}_k(x\pm 1))}\qquad \mbox{ for $x\in \Z$}.
\end{equation*}
Setting $\bar{M}_n(x) := \bar{Y}_n^+(x)-\bar{Y}_n^-(x)$, we have,
for any $x$,
 \begin{equation}\label{eqWb}
W(\bar{Z}_{n}(x+2))-W(\bar{Z}_{n}(x))=\bar{Y}_n^-(x+3)-\bar{Y}_n^+(x-1)+\bar{M}_{n}(x+1)+
C(x),
\end{equation}
where $C(x)$ is some constant depending only on $x$ and the initial
state $\kC$. Moreover,  for $x\in  \lin 1, 3 \rin$,  the process
$(\bar{M}_n(x),n\geq 0)$ is a martingale bounded in $L^2$.
 Therefore, recalling the notation $\equiv$ defined in the beginning of Section \ref{sectiontilde}, the a.s. convergence of
 $\bar{M}_n(2)$ gives
 $$W(\bar{Z}_n(3))\equiv W(\bar{Z}_n(1))\qquad \mbox{ on  $\mathcal{H}$}.$$
Using Lemma \ref{limborne} and the fact that
$\bar{Z}_n(3)+\bar{Z}_n(1)\sim n/2$, we deduce that
$$\bar{Z}_n(1)\sim \bar{Z}_n(3)\sim \frac{n}{4} \qquad \mbox{ on $\mathcal{H}$}.$$
Besides, the convergence of the martingale  $\bar{M}_n(3)$ combined
with the fact that $\bar{X}$ is reflected at site 4 imply that
$$ \bar{Y}_n^-(3)\equiv \bar{Y}_n^+(3)\equiv W(\bar{Z}_n(4)).$$
Hence, taking $x=0$ in \eqref{eqWb}, we get
\begin{equation}\label{eqWb2}
W(\bar{Z}_n(2))\equiv W(\bar{Z}_n(0))+W(\bar{Z}_n(4)).
\end{equation}
Define $I_n :=\min(\bar{Z}_n(0),\bar{Z}_n(4))$ and $S_n
:=\max(\bar{Z}_n(0),\bar{Z}_n(4))$. The previous equation gives
$$\limsup_{n\to \infty} \frac{W(I_n)}{W(\bar{Z}_n(2))}\le \frac{1}{2} \quad\mbox{and}\quad \limsup_{n\to \infty} \frac{W(S_n)}{W(\bar{Z}_n(2))}\le 1,$$
which implies, in view of Lemma \ref{limborne},
$$\limsup_{n\to \infty}\, \frac{I_n}{\bar{Z}_n(2)}=0 \quad\mbox{and}\quad \limsup_{n\to \infty}\, \frac{S_n}{\bar{Z}_n(2)}\le 1.$$
Using that $I_n+S_n+\bar{Z}_n(2)\sim n/2$, we get
$$\liminf_{n\to \infty}\, \frac{\bar{Z}_n(2)}{n/4}\ge 1.$$
In particular, denoting $K_n:=\max(\bar{Z}_n(1),\bar{Z}_n(3))\sim
n/4$, we deduce that for any $\delta>0$ and for $n$ large enough,
$$\bar{Z}_n(2)\ge (1-\delta)K_n.$$
On the other hand, Equation \eqref{eqWb2} shows that there exists a
(random) constant $\gamma$, such that for $n$ large enough,
$$W(\bar{Z}_n(2))\le 2W(S_n)+\gamma.$$
Hence, we find that
$$K_n\le \frac{1}{1-\delta}W^{-1}\left(2W(S_n)+\gamma\right).$$
Therefore, we have
\begin{multline*}
\bar{Y}^+_\infty(0)+\bar{Y}_\infty^-(4) \, = \,
\sum_{n=0}^\infty\frac{\mathbf{1}_{\{\bar{X}_n=0\}}}{w(\bar{Z}_n(1))}+\frac{\mathbf{1}_{\{\bar{X}_n=4\}}}{w(\bar{Z}_n(3))}
\,\ge\, \sum_{n=0}^\infty
\frac{\mathbf{1}_{\{\bar{X}_n\in\{0,4\}\}}}{w(K_n)}
 \,\ge\, c\sum_{n=0}^\infty
 \frac{\mathbf{1}_{\{\bar{X}_n\in\{0,4\}\}}}{w\left(\frac{1}{1-\delta}W^{-1}\left(2W(S_n)+\gamma\right)\right)}\\
 \,\ge\,  c'\sum_{k=0}^\infty
\frac{1}{w\left(W^{-1}\left(2W(k)+\gamma\right)\right)},
\end{multline*}
for some constants $c,c' > 0$. Recalling that
$$\Phi_{\eta,3}(x)=W^{-1}\left(\int_0^x \frac{dt}{w(\eta W^{-1}(W(x)/\eta))}\right),$$
we deduce that if $Y^+_\infty(0)+Y_\infty^-(4)$ is finite with
positive probability, then $\Phi_{\eta,3}(x)$ is bounded for any
$\eta<1/2$. This means that $i_-(w)=2$, which concludes the proof of
the proposition.

\end{document}